\newcommand{\R}{\mathbb{R}} 
\newcommand{\N}{\mathbb{N}}
\newcommand{\graph}{\operatorname{graph}}
\newtheorem{theorem}{Theorem}[section]
\newtheorem{lemma}[theorem]{Lemma}
\theoremstyle{definition}
\newtheorem{definition}[theorem]{Definition}
\newtheorem{remark}[theorem]{Remark}
\numberwithin{equation}{section}
\title{The Helfrich Boundary Value Problem}
\author{Sascha Eichmann\thanks{The author thanks Prof. Reiner Sch\"atzle for discussing the Helfrich energy and providing insight into geometric measure theory.}\\
Mathematisch-Naturwissenschaftliche Fakultät,\\
Eberhard Karls Universität Tübingen,\\
Auf der Morgenstelle 10,\\
D-72076 Tübingen, Germany\\
E-mail: \href{mailto:sascha.eichmann@math.uni-tuebingen.de}{sascha.eichmann@math.uni-tuebingen.de}\\
Phone: +49/7071/2976886}
\begin{document}
 \maketitle

\begin{abstract}
We construct a branched Helfrich immersion satisfying Dirichlet boundary conditions. The number of branch points is finite.\\
We proceed by a variational argument and hence examine the Helfrich energy for oriented varifolds.
The main contribution of this paper is a lower semicontinuity result with respect to oriented varifold convergence for the Helfrich energy and a minimising sequence.
For arbitrary sequences this is false by a counterexample of Gro\ss e-Brauckmann.

\end{abstract}
\textbf{Keywords.} Dirichlet boundary conditions, Helfrich immersion, Existence\\ 
\textbf{MSC.} 35J35, 35J40, 58J32, 49Q20, 49Q10

\section{Introduction}
\label{sec:1}
The Helfrich energy (or Canham-Helfrich energy) was introduced by Helfrich in \cite{Helfrich} resp. Canham in \cite{Canham}. 
We will deal with the following variant, which is defined for an oriented immersion $f:\Sigma\rightarrow \R^3$ with a smooth $2$-dimensional manifold $\Sigma$ 
\begin{equation}
\label{eq:1_1}
 W_{H_0,\lambda}(f) := \int_\Sigma (\bar{H}-H_0)^2 + \lambda d\mu_g.
\end{equation}
Here $\mu_g$ denotes the area measure induced by $f$ and $\bar{H}$ the mean curvature of $f$ with respect to the normal $\nu$ of $f$ induced by the orientation (i.e. the sum of the principal curvatures).
In this article $H_0\in\R$ is arbitrary and $\lambda>0$.
In terms of the mean curvature vector $H$ we can write $\bar{H}=H\cdot\nu$. 
The corresponding Euler-Lagrange equation is as follows (see \cite[Eq. (31)]{OuYang_Helfrich})
\begin{equation}
 \label{eq:1_2}
 2\Delta_f \bar{H} + 4\bar{H}\left(\frac{1}{4}\bar{H}^2-K\right)-2H_0 K -H_0^2\bar{H}-\lambda \bar{H}=0.
\end{equation}
Here $\Delta_f$ denotes the Laplace-Beltrami and $K$ the Gauss curvature of $f$.
We call an immersion satisfying this equation a Helfrich immersion or just Helfrich. 

Apart from geometrical interest, the Helfrich energy has applications in e.g. biology in modeling red blood cells or lipid bilayers (see e.g. \cite{Canham}, \cite{Helfrich} or \cite{OuYang}).
In mathematics itself research has been mainly focused on the Willmore energy (i.e. $H_0=0$, $\lambda=0$), 
which was revived by Willmore in \cite{Willmore}, but has already been studied by e.g. Thomsen in \cite{Thomsen}. 
A general discussion of such curvature integrals for variational purposes can be found in \cite{Nitsche}.
In context of the Willmore energy mainly compact surfaces without boundary were considered, see e.g. \cite{Simon}, \cite{BauerKuwert} or \cite{NevesMarques}.
Compact surfaces are also considered for minimising the Helfrich energy under fixed area and enclosed volume. 
In this regard some results have been established for axisymmetric surfaces, see e.g. \cite{ChoksiVeroni} and \cite{ChoksiMorandottiVeneroni}.
For the Willmore energy the scaling invariance simplifies this problem to prescribing the Isoperimetric Ratio and was solved for genus $0$ surfaces in \cite{Schygulla}.

We on the other hand will focus on a Dirichlet boundary value problem for Helfrich immersions. 
For the Willmore energy existence results have already been achieved by e.g. Schätzle in the class of branched immersions in \cite{Schaetzle}. 
In \cite{DaLioPalmurellaRiviere} these existence results were improved by different methods and e.g. branch points were excluded from the boundary.
Our reasoning will be heavily inspired by Sch\"atzle's article \cite{Schaetzle} and Simon's work \cite{Simon}.
In the class of surfaces of revolution existence results for Willmore surfaces were obtained in the series of the papers \cite{DallDeckGru}, \cite{DallFroehGruSchie} and \cite{EichmannGrunau}.
These results even have some extensions to the Helfrich energy in \cite{Scholtes} or \cite{Doemeland}.
In the class of graphs existence of a very weak minimiser in the class of functions of bounded variation for the Helfrich and Willmore energy was obtained in \cite{DeckGruRoe}.

We will show an existence result for Dirichlet boundary data in the class of branched immersions.
The Helfrich equation \eqref{eq:1_2} is of fourth order, where many established techniques like the maximum principle do not apply.
Hence we will use a variational approach with oriented varifolds (see \cite{Hutchinson} or appendix \ref{sec:B}).
Unfortunately the Helfrich energy \eqref{eq:1_1} is in general not lower semicontinuous with respect to varifold convergence (see \cite{GrosseBrauck}).
Delladio was able to overcome this obstacle in \cite{Delladio} (see also \cite{AnzellottiSerapioniTamanini} for combining Gauss graphs and curvature integrals)
by working with current convergence of the associated Gauss graphs of the surfaces.
Unfortunately he had to assume $C^2$-regularity on the limit of a minimising sequence for his argument to work, which is a-priori not clear.\\ 

Let us now describe our Dirichlet problem in detail:
To do this we need a given smooth one dimensional compact and embedded manifold  $\Gamma\subset\R^3$ with a smooth unit normal vectorfield ${\bf n}\in N\Gamma\subset \R^3$.
Then our Dirichlet boundary conditions can be stated for a smooth, two dimensional immersion $f:\Sigma\rightarrow\R^3$:
\begin{equation}
\label{eq:1_3}
\left\{\begin{array}{c}f|_{\partial \Sigma}\rightarrow \Gamma \mbox{ is a diffeomorphism}\\
        {\bf co}_f(f(x))={\bf n}(f(x)),\quad x\in\partial\Sigma,
       \end{array}\right. 
\end{equation}
here ${\bf co}_f$ denotes the inner conormal of $f$.

Our main theorem can be stated as follows:
\begin{theorem}
 \label{1_1}
 Let $\lambda>0$ and $H_0\in\R$, then for any smooth embedded compact oriented one-dimensional manifold $\Gamma\subset \R^3$ with smooth unit normal field $n\in N\Gamma$, 
 there exists a two dimensional branched immersion $f:\Sigma\rightarrow\R^3$ of a compact
 oriented manifold $\Sigma$, which satisfies \eqref{eq:1_3} and is Helfrich outside the finitely many branch points.
 $f$ is smooth outside the branch points and continuous at the branch points.
\end{theorem}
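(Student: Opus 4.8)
The plan is to run the direct method of the calculus of variations in the class of oriented integral $2$-varifolds, following the scheme of Simon \cite{Simon} and Sch\"atzle \cite{Schaetzle} for the Willmore problem and treating the $H_0$-terms as lower-order perturbations. First I would note that $W_{H_0,\lambda}(f)=\int_\Sigma(\bar H-H_0)^2\,d\mu_g+\lambda\,\mu_g(\Sigma)\ge 0$, so that $\beta:=\inf W_{H_0,\lambda}$ over all smooth immersions satisfying \eqref{eq:1_3} is a finite non-negative number (the class is non-empty, since a fixed smooth collar around $\Gamma$ with conormal ${\bf n}$, closed off arbitrarily, is admissible with finite energy). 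For a minimising sequence $f_k$ one then gets for free, from $\lambda>0$, a uniform bound $\mu_{g_k}(\Sigma_k)\le(\beta+1)/\lambda$ on the area and, after absorbing the linear term, $\int_{\Sigma_k}\bar H_k^2\,d\mu_{g_k}\le\Lambda(\beta,\lambda,H_0)$ on the Willmore part. After a standard comparison/cut-off modification near $\partial\Sigma$ one may assume the $f_k$ coincide with a fixed reference surface in a collar of $\Gamma$ (changing the energy by $o(1)$, so the sequence stays minimising); then Simon's diameter estimate confines all $f_k(\Sigma_k)$ to one fixed ball $B_R$, and Hutchinson's compactness theorem for oriented integral varifolds (Appendix \ref{sec:B}, cf. \cite{Hutchinson}) yields a subsequence converging as oriented varifolds to an oriented integral $2$-varifold $V$ in $B_R$ with $\|\delta V\|<\infty$, $H_V\in L^2(\|V\|)$, and with prescribed boundary $\Gamma$ and conormal ${\bf n}$.

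The heart of the matter, and the step I expect to be the main obstacle, is lower semicontinuity: by Gro\ss e-Brauckmann's example \cite{GrosseBrauck} one does \emph{not} have $W_{H_0,\lambda}(V)\le\liminf_k W_{H_0,\lambda}(f_k)$ for arbitrary converging sequences. Writing (for hypersurfaces, where $H=\bar H\nu$) $W_{H_0,\lambda}(f)=\int|H|^2\,d\mu-2H_0\int H\cdot\nu\,d\mu+(H_0^2+\lambda)\,\mu$, the first and third terms are lower semicontinuous under (oriented) varifold convergence, but the linear cross-term $-2H_0\int H\cdot\nu\,d\mu$ need not be, since it is sensitive to the orientation and can jump downwards where curvature concentrates or oppositely-oriented sheets coalesce. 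The plan is to prove semicontinuity \emph{only along the minimising sequence}, by a concentration--compactness argument: I would show that the Radon measures $|H_k|^2\,\mu_{g_k}$ can concentrate at most at finitely many points $p_1,\dots,p_N\in B_R$, with $N$ bounded in terms of $\Lambda,\lambda,H_0$ --- at small scales $W_{H_0,\lambda}$ is dominated by its Willmore part, a quantum of concentrated Willmore energy costs a definite amount, and too much concentration at a point could be removed by excising a small ball and capping the resulting curves with low-energy discs, strictly lowering the energy and contradicting minimality. Away from $\{p_1,\dots,p_N\}\cup\partial\Sigma$ the $f_k$ subconverge smoothly after reparametrisation, so $H_k\to H_V$ locally in $L^2$ and the full integrand passes to the limit there; at each $p_j$ the density of $V$ is an integer $\ge 2$, the concentrated part contributes non-negatively to $\int|H|^2$ and to $\lambda\mu$, and --- this is the delicate point the oriented-varifold bookkeeping is designed to control --- it does not make the linear term decrease. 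Combining the pieces gives $W_{H_0,\lambda}(V)\le\beta$, so $V$ is a minimiser; the hard part is making the slogan ``minimisers do not concentrate, and the linear term only misbehaves at concentration'' rigorous, uniformly up to $\partial\Sigma$, in the oriented-varifold language.

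Finally I would extract the geometric conclusions from minimality. As a constrained minimiser, $V$ is stationary for $W_{H_0,\lambda}$ and hence a weak solution of the Euler--Lagrange equation \eqref{eq:1_2}; invoking the regularity theory for such fourth-order problems in the spirit of \cite{Simon,Schaetzle}, away from the finitely many points of density $\ge2$ the varifold $V$ is locally a $W^{2,2}$-graph solving \eqref{eq:1_2} weakly, and elliptic bootstrapping through \eqref{eq:1_2} upgrades this to $C^\infty$; at each density-$\ge2$ point one obtains a branch point with a local conformal expansion of the type $z\mapsto z^m+o(|z|^m)$. This realises $V$ as the image of a branched immersion $f:\Sigma\to\R^3$ of a compact oriented surface $\Sigma$, smooth off the finitely many branch points and continuous across them; near $\Gamma$ the surface coincides with the fixed reference collar, so Sch\"atzle's boundary regularity \cite{Schaetzle} applies and shows $f$ satisfies \eqref{eq:1_3}, completing the argument.
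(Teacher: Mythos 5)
Your skeleton matches the paper's: direct method in the class of oriented integral varifolds, gluing to a fixed reference immersion near $\Gamma$, Hutchinson compactness, and crucially the recognition that lower semicontinuity can only hold along the minimising sequence (Gro\ss e-Brauckmann). You also correctly note that $\lambda>0$ gives an area bound and that absorbing the cross term gives a Willmore bound, just as in \eqref{eq:2_4}. The endgame (finitely many branch points, bootstrapping through the Euler--Lagrange equation, boundary regularity via Sch\"atzle) is also the paper's.

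However, your proposed mechanism for lower semicontinuity has a genuine gap, and it is precisely the step you flag as ``the heart of the matter.'' You assert that away from finitely many points the $f_k$ ``subconverge smoothly after reparametrisation, so $H_k\to H_V$ locally in $L^2$ and the full integrand passes to the limit.'' This is false: the uniform $\int|A_k|^2$ bound gives only $C^{1,\alpha}\cap W^{2,2}_{loc}$ control on the limit and \emph{weak} $W^{2,2}$ convergence of the graph functions, so $H_k\rightharpoonup H_V$ weakly in $L^2$ but not strongly, and $\int|H_k|^2$ is only lower semicontinuous. The whole difficulty is to pass the \emph{cross term} $\int H_k\cdot\nu_k$ to the limit without strong $L^2$ convergence of $H_k$. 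The paper handles this by first proving a partial regularity lemma (Lemma~\ref{3_1}: Simon-type graphical decomposition, biharmonic/trace-extension comparison, Gauss--Bonnet to control the extrinsic terms, power decay, Allard) so that near good points $\mu+\mu_0$ splits into finitely many $C^{1,\alpha}\cap W^{2,2}$ graph sheets, then shows the limit orientation $\xi_i$ on each sheet is \emph{continuous} via the constancy theorem for currents, and only then uses the duality formula \eqref{eq:4_4} for $\|H-H_0(*\xi_i)\|_{L^2}$ together with convergence of the first variation as a vector-valued Radon measure to get semicontinuity sheet by sheet, before a Vitali covering assembles the global statement. None of this machinery appears in your sketch, and without it the key claim --- that the linear term cannot drop in the limit --- is unproven.

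Two smaller points. Your mechanism for ``finitely many concentration points'' (excising a small ball and capping with low-energy discs, contradicting minimality) is different from the paper's: the paper gets finitely many bad points for free from the global bound $\nu_0(\R^n)\le C$ on the limit of $|A_{m,0}|^2\,(\mu_m+\mu_0)$, with no comparison construction at all; the comparison construction is used later (Lemma~\ref{5_2}) to show $V^0$ is Helfrich, not to control the number of bad points. And the assertion that at a density-$\ge2$ point the concentrated part ``does not make the linear term decrease'' is never needed in the paper's argument, because $\omega(\{x_j\})=0$ and hence the bad points contribute nothing to $W_{H_0,\lambda}(V^0)$; what must be shown is semicontinuity on the complement, which is exactly the part your sketch glosses over.
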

\begin{remark}
 \label{1_2}
Branch points of $f$ may appear on the boundary.
\end{remark}

Let us now summarize the most important parts of the proof und discuss some other possible approaches:
We will describe our minimising sequence as oriented varifolds and apply Hutchinsons compactness result \cite[Thm. 3.1]{Hutchinson} in section \ref{sec:2} to find a limit after extracting a subsequence. 
Then we modify the arguments of \cite[Prop. 2.2]{Schaetzle} resp. \cite[Thm 3.1]{Simon} to show $C^{1,\alpha}\cap W^{2,2}_{loc}$-regularity outside of 
finitely many 'bad' points in Lemma \ref{3_1}. This lemma also shows that the limit can be decomposed into $C^{1,\alpha}\cap W^{2,2}$-graphs near 'good' points.
This enables us to prove lower semicontinuity for our minimising sequence in section \ref{sec:4}, because we will be able to write the Helfrich energy for one such graph variationally,
i.e. in the form of 
\begin{equation*}
\sup \left\{\int h \varphi\, d\mathcal{H}^2|\ \|\varphi\|_{L^2}\leq 1,\ \varphi\mbox{ continuous with compact support}\right\}
\end{equation*}
with some $\mathcal{H}^2$-measurable function $h$ (see \eqref{eq:4_4} for a precise statement). 
For arbitrary sequences of oriented varifolds this does not seem to be possible, since the addition of the two 'square' terms in the general form of the Helfrich energy \eqref{eq:2_1} seems to prevent this.
Furthermore we cannot use Delladio's result \cite[Thm 5.1]{Delladio}, since we do not know if the limit is $C^2$-regular everywhere. 
Adapting Delladio's proof to our setting also does not seem to be possible, since he uses the lower semicontinuity and compactness result of Hutchinson \cite[4.4.2]{Hutchinson} for measure-function pairs. 
Even if we find measure-function pairs and a corresponding integrand to encode the Helfrich energy, it is not clear that the resulting limit measure-function pair is related to any curvature function.
For example such an integrand may look like $F(y,q)=(q-H_0)^2$ with a corresponding measure-function pair $(\mu, h)$ with $h(y)=\bar{H}$. 
For Gro\ss e-Brauckmann's counterexample \cite[p. 550, Remark (ii)]{GrosseBrauck} this would result in $h=1$ and $W_{H_0,0}=0$ for the whole sequence and the limit measure function pair would be of the form $(\mu,y\mapsto 1)$.
This is in contrast to the fact, that in this example the sequence convergences to a plane of multiplicity $2$.\\
In section \ref{sec:5} we collect the needed arguments of \cite{Simon}, \cite{Schaetzle} and \cite{NdiayeSchaetzle} to finish the proof of Theorem \ref{1_1} with Lemma \ref{5_3}.
There we only sketch the arguments, since they have essentially been given in the aforementioned papers.

\section{Compactness}
\label{sec:2}
As in \cite{Schaetzle} we will prove Theorem \ref{1_1} with the direct method of the calculus of variations employing geometric measure theory.
Since the Helfrich energy depends on a given orientation, we will formulate the compactness arguments in the context of oriented varifolds. 
These were introduced by Hutchinson (see \cite[chapter 3]{Hutchinson}). 
For the readers convenience the basic definitions and notations of these objects are summarized in Appendix \ref{sec:B}.

Since we will work with $2$-dimensional varifolds in $\R^3$, we can identify the oriented Grassmannian $G^0(2,3)$ with $\partial B_1(0)=\{\nu\in\R^3:|\nu|=1\}$ by the Hodge star 
operator $*$ (see e.g. \cite[exercise 16-18]{LeeIntroManifold}). 
In our case $*$ is given for $\tau_1\wedge \tau_2\in G^0(2,3)$ as the cross product, i.e. $*(\tau_1\wedge \tau_2)=\tau_1\times\tau_2$.\\
Let $V^0=V^0(M,\theta_\pm,\xi)\in RV^0(\R^3)$ such that, the mean curvature vector $H_{V^0}$ of $\mu_{V^0}$ satisfies $H_{V^0}\in L^2(\mu_{V^0})$ (see \eqref{eq:B_4} for a precise definition). 
Then the Helfrich energy is defined as
\begin{align}
\begin{split}
\label{eq:2_1}
W_{H_0,\lambda}(V^0)=&\int_{G^0(\R^3)} (H_{V^0}(x)-(*\xi)H_0)^2\, dV^0(x,\xi)+\lambda \mu_{V^0}(\R^3)\\
=& \int_M((H_{V^0}-(*\xi(x))H_0)^2\theta_+(x) \\
& + (H_{V^0}+(*\xi(x))H_0)^2\theta_-(x))\, d\mathcal{H}^2(x) + \lambda\int_M\theta_++\theta_-\, d\mathcal{H}^2.
\end{split}
\end{align}
Please note, that by Brakke's orthogonality result (see \cite[§5]{Brakke}) $H_{V_0}(x)$ is parallel to $*\xi(x)$ for $\mu_{V^0}$ a.e. $x$, if $V^0$ is integral.\\
If $M$ is a $2$-dimensional orientable $C^1$-immersion $f:\Sigma\rightarrow \R^3$ 
with a given $\mathcal{H}^2$-measurable orientation $\xi_f:f(\Sigma)\rightarrow G^0(3,2)$, the corresponding oriented integral varifold is
\begin{equation*}
 V^0_f:=V^0(f(\Sigma),\theta_+,\theta_-,\xi).
\end{equation*}
To define the densities let us denote the choosen orientation of $T_x\Sigma$ by $\tau(x)$. 
Then $\theta_+,\theta_-:f(\Sigma)\rightarrow \N_0$ are defined by
\begin{align}
\begin{split}
\label{eq:2_1_1}
 \theta_+(y)&=\sum_{x\in f^{-1}(y)}\operatorname{sign}_+(df(\tau(x))\diagup (\xi_f(y)),\\ \theta_-(y)&=\sum_{x\in f^{-1}(y)}\operatorname{sign}_-(df(\tau(x))\diagup (\xi_f(y)).
\end{split}
 \end{align}
Here 
\begin{equation*}
 \operatorname{sign}_+(df(\tau(x))\diagup (\xi_f(y))=\left\{\begin{array}{cc}1,&\mbox{ if }\xi_f(y)\mbox{ is the same orientation as }df(\tau(x))\\ 0,&\mbox{ else.}\end{array}\right. 
\end{equation*}
Analogously $\operatorname{sign}_-(df(\tau(x))\diagup (\xi_f(y))=1$ if $\xi_f(y)$ is the opposite orientation of $df(\tau(x))$. 
Please note, that these densities are only well defined $\mathcal{H}^2\lfloor f(\Sigma)$ almost everywhere, which is enough to obtain a well defined oriented varifold (see also Figure \ref{fig_1}).
\begin{figure}[h] 
\centering 
\includegraphics{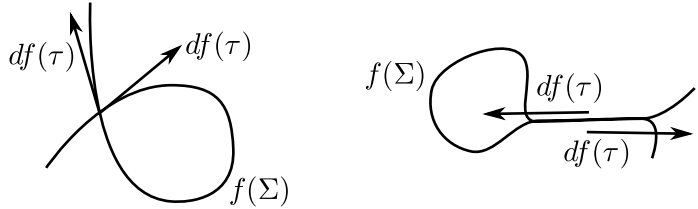}  
\caption{Examples of the behaviour of $df(\tau(x))$}
\label{fig_1}
\end{figure}

Let us now define our minimising sequence: Let $\Sigma$ be a compact orientable connected manifold with boundary. 
Then we consider the sequence of oriented smooth immersions $f_{m}:\Sigma\rightarrow\R^3$ satisfiying
\begin{equation}
 \label{eq:2_2}
\left\{\begin{array}{c} f_m|_{\partial \Sigma}\rightarrow \Gamma \mbox{ is a diffeomorphism}\\
        {\bf co}_{f_m}(f_m(x))={\bf n}(f_m(x)),\quad x\in\partial\Sigma\\
        W_{H_0,\lambda}(f_m)\rightarrow \inf_{f:\Sigma\rightarrow\R^3 \mbox{ satisfies \eqref{eq:1_3}, smooth}}W_{H_0,\lambda}(f)
       \end{array}\right.
\end{equation}
We call the corresponding integral varifold $V^0_{f_m}$ as defined above.

Now we consider a fixed oriented smooth immersion $f_0:\Sigma_0\rightarrow\R^3$ such that
\begin{equation*}
 \left\{\begin{array}{c}f_0|_{\partial \Sigma_0}\rightarrow \Gamma \mbox{ is a diffeomorphism}\\
        {\bf co}_{f_0}(f_0(x))=-{\bf n}(f_0(x)),\quad x\in\partial\Sigma_0,
       \end{array}\right.
\end{equation*}
After an appropriate glueing we obtain $f_{m,0}:=f_m\oplus f_0:\Sigma_\oplus:=\Sigma\oplus\Sigma_0\rightarrow\R^3$ to be a compact $C^{1,1}$ immersion without boundary.
After slightly smoothing around $\Gamma$ we can even assume $f_{m,0}\in C^2$, while still keeping \eqref{eq:2_2}. Hence
\begin{equation}
 \label{eq:2_3}
 W_{H_0,\lambda}(V^0_{f_{m,0}})\leq E:=E(\Gamma,{\bf n}, \Sigma,\lambda,H_0).
\end{equation}
For the sake of brevitiy let us denote 
\begin{equation*}
\mu_m:=\mu_{V^0_{f_m}}=(x\mapsto \mathcal{H}^0(f_m^{-1}(x)))\mathcal{H}^2\lfloor f_m(\Sigma)
\end{equation*}
and 
\begin{equation*}
\mu_0:=\mu_{V^0_{f_0}}=(x\mapsto \mathcal{H}^0(f_0^{-1}(x)))\mathcal{H}^2\lfloor f_0(\Sigma_0).
\end{equation*}
Therefore the mass of $V^0_{f_{m,0}}$ satisfies
\begin{equation*}
 \mu_m+\mu_0 = \mu_{V^0_{f_{m,0}}}.
\end{equation*}
Let us further denote the second fundamental form of $f_m$, $f_0$ and $f_{m,0}$ by $A_m$, $A_0$ and $A_{m,0}$ respectively.
Next we will apply Hutchinson's compactness Theorem \ref{B_2} (see \cite[Theorem 3.1]{Hutchinson}) to $V^0_{f_{m,0}}$. 
Luckily the boundary of the associated current satisfies $\partial[|V^0_{f_{m,0}}|]=0$, since $f_{m,0}$ does not have a boundary either. Let $\xi_{f_{m,0}}$ be the orientation of $f_{m,0}(\Sigma_\oplus)$ and $\theta_{\pm,m}$ the densities as defined above. 
Hence we only need to estimate the mass and the first variation. We use the Cauchy-Schwartz and $\varepsilon$-Young inequality to deduce
\begin{align*}
 & \int_{f_{m,0}(\Sigma_\oplus)}(H_{V^0_{f_{m,0}}}-H_0(*\xi_{f_{m,0}}(x)))^2\theta_{+,m}(x)\, d\mathcal{H}^2(x)+\lambda \int_{(f_{m,0}(\Sigma_\oplus))}\theta_{+,m}\, d\mathcal{H}^2\\
 =&\int_{f_{m,0}(\Sigma_\oplus)}( (H_{V^0_{f_{m,0}}})^2- 2H_0 H_{V^0_{f_{m,0}}}\cdot (*\xi_{f_{m,0}}) + H_0^2 + \lambda)\theta_{+,m}\,d\mathcal{H}^2\\
 \geq &\int_{f_{m,0}(\Sigma_\oplus)}((H_{V^0_{f_{m,0}}})^2- 2|H_0||(H_{V^0_{f_{m,0}}})| + H_0^2+ \lambda)\theta_{+,m}\,d\mathcal{H}^2\\
 \geq & \int_{f_{m,0}(\Sigma_\oplus)}((H_{V^0_{f_{m,0}}})^2- \varepsilon (H_{V^0_{f_{m,0}}})^2 - \frac{1}{\varepsilon}H_0^2 + H_0^2 + \lambda)\theta_{+,m}\,d\mathcal{H}^2\\
 =& \int_{f_{m,0}(\Sigma_\oplus)}((1-\varepsilon)(H_{V^0_{f_{m,0}}})^2 + (\lambda+(1-\frac{1}{\varepsilon})H_0^2)\theta_{+,m})\, d\mathcal{H}^2.
\end{align*}
Since $\lambda>0$ we can choose $\varepsilon>0$ to satisfy $1>\varepsilon$ and $\lambda+(1-\frac{1}{\varepsilon})H_0^2=\frac{\lambda}{2}$. 
Hence we find a constant $C=C(H_0,\lambda)>0$ such that
\begin{equation}
 \label{eq:2_4}
 W_{H_0,\lambda}(V^0_{f_{m,0}})\geq C\int_{f_{m,0}(\Sigma_\oplus)}((H_{V^0_{f_{m,0}}})^2+1)(\theta_{+,m}+\theta_{-,m})\, d\mathcal{H}^2.
\end{equation}
Therefore the assumptions for Theorem \ref{B_2} are satisfied. 
By choosing a suitable subsequence we obtain the following
\begin{equation}
 \label{eq:2_5}
 \begin{array}{c}
 V^0_{f_{m,0}}\rightarrow V^0\mbox{ as oriented varifolds,}\\
 \mu_m\rightarrow \mu\mbox{ weakly as varifolds}\\
 \mu+\mu_0=\mu_{V^0}
 \end{array}
\end{equation}
with 
\begin{equation}
\label{eq:2_5_1}
 V^0=V^0(M,\theta_+,\theta_-,\xi)
\end{equation}
being an integral oriented varifold as in \eqref{eq:B_3}.
Since the topology of $f_m$ is fixed, we also obtain a bound on the second fundamental form (see e.g. \cite[Eq. (1.1)]{Schaetzle}) i.e.
\begin{equation*}
 \int_{\Sigma_\oplus}|A_{m,0}|^2\, d\mu_{g_m\oplus g_0}\leq C:=C(E, \Sigma_0,\lambda,H_0).
\end{equation*}
Therefore \cite[Theorem 5.3.2]{Hutchinson} yields $\mu+\mu_0$ to be an integral $2$-varifold with weak second fundamental form $A_{\mu+\mu_0}\in L^2(\mu+\mu_0)$ 
and as in \cite[Eq. (2.6)]{Schaetzle} we obtain for $A_\mu:=A_{\mu+\mu_0}-A_{\mu_0}\in L^2(\mu)$
\begin{equation}
 \label{eq:2_6}
 \begin{array}{c}
  |A_{m,0}|^2(\mu_m+\mu_0)\rightarrow \nu_0\mbox{ weakly as Radon measures,}\\
  |A_m|^2\mu_m\rightarrow \nu\mbox{ weakly as Radon measures,}\\
  |A_\mu|^2\mu\leq \nu\leq \nu_0,\ \nu_0(\R^n)\leq C:=C(\Gamma,{\bf n}, \Sigma,\lambda,H_0).
 \end{array}
\end{equation}

\begin{lemma}
 \label{2_1}
 $spt(\mu+\mu_0)$ is compact.
\end{lemma}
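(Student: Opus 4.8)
The plan is to prove that $\operatorname{spt}(\mu+\mu_0)$ is bounded; since the support of a Radon measure is automatically closed, this is enough. The key ingredient is a uniform diameter bound for the approximating closed surfaces $f_{m,0}(\Sigma_\oplus)$, which is precisely why the $f_m$ were glued to the fixed $f_0$ along $\Gamma$ in order to obtain surfaces \emph{without} boundary before applying the compactness theorem.

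First I would combine \eqref{eq:2_3} with \eqref{eq:2_4} to obtain a uniform estimate
$\mu_{V^0_{f_{m,0}}}(\R^3) + \int |H_{V^0_{f_{m,0}}}|^2 \, d\mu_{V^0_{f_{m,0}}} \le C(\Gamma,{\bf n},\Sigma,\lambda,H_0)$,
i.e. the immersions $f_{m,0}$ have uniformly bounded area and uniformly bounded $L^2$-norm of the mean curvature. Since $\Sigma_\oplus$ is connected and each $f_{m,0}$ is a closed $C^2$ immersion without boundary, the diameter estimate for closed surfaces with bounded area and $L^2$ mean curvature (Simon \cite{Simon}, see also \cite{Schaetzle}) then yields $\operatorname{diam} f_{m,0}(\Sigma_\oplus) \le C$ uniformly in $m$.

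Next, each $f_{m,0}(\Sigma_\oplus)$ contains the fixed compact curve $\Gamma$, because both $f_m|_{\partial\Sigma}$ and $f_0|_{\partial\Sigma_0}$ parametrise $\Gamma$. Hence the uniform diameter bound forces $f_{m,0}(\Sigma_\oplus)\subseteq K$ for the fixed compact set $K:=\{x\in\R^3 : \operatorname{dist}(x,\Gamma)\le C\}$ and all $m$, so $\mu_{V^0_{f_{m,0}}}(\R^3\setminus K)=0$ for every $m$. Using the convergence \eqref{eq:2_5} together with the lower semicontinuity of mass on the open set $\R^3\setminus K$ gives $\mu_{V^0}(\R^3\setminus K)=0$, that is $\operatorname{spt}(\mu+\mu_0)=\operatorname{spt}\mu_{V^0}\subseteq K$. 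Being a closed subset of the compact set $K$, $\operatorname{spt}(\mu+\mu_0)$ is compact.

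The only genuinely nontrivial step is the diameter estimate, and its use relies essentially on $f_{m,0}$ being boundaryless; everything else is a soft consequence of weak convergence of Radon measures. (Alternatively one could invoke a diameter estimate for surfaces with boundary applied directly to $f_m(\Sigma)$ and use that $\operatorname{spt}\mu_0$ is a fixed compact set, but the route via the closed surfaces $f_{m,0}$ matches the construction already set up above.)
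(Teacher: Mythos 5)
Your proof is correct and rests on the same key ingredient as the paper's: the uniform diameter estimate for the closed, boundaryless surfaces $f_{m,0}(\Sigma_\oplus)$, obtained from Simon's \cite[Lemma 1.1]{Simon} together with \eqref{eq:2_3} and \eqref{eq:2_4}. The only difference is in the final measure-theoretic bookkeeping: the paper shows directly that every $x\in\operatorname{spt}(\mu+\mu_0)$ is a limit of points $x_m\in\operatorname{spt}(\mu_m+\mu_0)$ (via lower semicontinuity of mass on small balls $B_\rho(x)$) and then bounds $\operatorname{diam}\operatorname{spt}(\mu+\mu_0)$ by $C$, whereas you anchor to the fixed curve $\Gamma\subset f_{m,0}(\Sigma_\oplus)$, trap all $f_{m,0}(\Sigma_\oplus)$ inside a fixed compact tube $K$, and conclude $\mu_{V^0}(\R^3\setminus K)=0$ by semicontinuity on the open complement. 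Both are valid; yours makes the role of $\Gamma$ as an anchor explicit, which is a small but clean observation, while the paper's version has the side benefit of not needing to identify a specific common compact set and is the same type of argument it reuses for Lemma \ref{2_2}.
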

\begin{proof}
 Simon's diameter estimate \cite[Lemma 1.1]{Simon} and \eqref{eq:2_4} yield
 \begin{equation}
 \label{eq:2_7}
  \operatorname{diam}(f_{m,0}(\Sigma_\oplus))\leq C\sqrt{(\mu_{m}+\mu_0)(\R^n)\cdot W_{0,0}(V^0_{f_{m,0}})}\leq C.
 \end{equation}
Now let $x\in spt(\mu+\mu_0)$. For an arbitrary $\rho>0$ we obtain by e.g. \cite[Prop. 4.26]{Maggi} and the defintion of the support of a Radon measure
\begin{equation*}
 0<(\mu+\mu_0)(B_\rho(x))\leq \liminf_{m\rightarrow\infty}(\mu_m+\mu_0)(B_\rho(x)).
\end{equation*}
Hence $spt(\mu_m+\mu)\cap B_\rho(x)\neq\emptyset$ for $m$ big enough. Therefore we can find $x_m\in spt(\mu_m+\mu_0)$ such that $x_m\rightarrow x$. By \eqref{eq:2_7} we finally obtain 
\begin{equation*}
 \operatorname{diam}(spt(\mu+\mu_0))\leq C
\end{equation*}
and the lemma is proven.
\end{proof}

We also obtain the following lemma because the needed assumptions in \cite[Prop. 2.1]{Schaetzle} are also satisfied:
\begin{lemma}[see Prop. 2.1 in \cite{Schaetzle}]
 \label{2_2}
 \begin{equation*}
  f_{m,0}(\Sigma_\oplus)=spt(\mu_m+\mu_0)\rightarrow spt(\mu+\mu_0)\supset\Gamma\neq \emptyset
 \end{equation*}
 locally in Hausdorff distance, that is
 \begin{equation*}
  spt(\mu + \mu_0)=\{x\in\R^n:\ \exists x_m\in spt(\mu_m+\mu_0)\mbox{ with } x_m\rightarrow x\}.
 \end{equation*}
\end{lemma}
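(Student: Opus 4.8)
The plan is to prove the three assertions separately, following \cite[Prop.~2.1]{Schaetzle} and recycling the argument already used in Lemma~\ref{2_1}. First I would check the identity $spt(\mu_m+\mu_0)=f_{m,0}(\Sigma_\oplus)$. Since on each sheet $\operatorname{sign}_+$ and $\operatorname{sign}_-$ add up to $1$, the density of $\mu_m+\mu_0=\mu_{V^0_{f_{m,0}}}$ at a point $y$ equals $\theta_{+,m}(y)+\theta_{-,m}(y)=\mathcal{H}^0(f_{m,0}^{-1}(y))$, which is $\geq 1$ on $f_{m,0}(\Sigma_\oplus)$ and $0$ elsewhere. As $f_{m,0}$ is continuous (even $C^2$) on the compact manifold $\Sigma_\oplus$, its image is compact, hence closed, and therefore equals $spt(\mu_m+\mu_0)$. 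This also makes $spt(\mu+\mu_0)\supset\Gamma\neq\emptyset$ immediate: the fixed immersion $f_0$ satisfies $\Gamma=f_0(\partial\Sigma_0)\subset f_0(\Sigma_0)=spt(\mu_0)$, and from $\mu+\mu_0\geq\mu_0$ as Radon measures we get $spt(\mu_0)\subset spt(\mu+\mu_0)$. (Alternatively $\Gamma\subset f_{m,0}(\Sigma_\oplus)$ for all $m$, and one applies the inclusion established next.)

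The main content is the Kuratowski convergence of the supports. Because by \eqref{eq:2_7} all the sets $spt(\mu_m+\mu_0)$ and $spt(\mu+\mu_0)$ lie in one fixed bounded region, the asserted set equality is equivalent to convergence in (local, here even global) Hausdorff distance: were this false, one of the two one-sided Hausdorff distances would stay $\geq\delta>0$ along a subsequence, and boundedness would then produce a point violating one of the two inclusions below. For the inclusion $spt(\mu+\mu_0)\subset\{x:\exists\,x_m\in spt(\mu_m+\mu_0),\ x_m\to x\}$ I would repeat the argument from the proof of Lemma~\ref{2_1}: for $x\in spt(\mu+\mu_0)$ and any $\rho>0$, the weak convergence $\mu_m+\mu_0\to\mu+\mu_0$ together with lower semicontinuity on the open ball yield $\liminf_m(\mu_m+\mu_0)(B_\rho(x))\geq(\mu+\mu_0)(B_\rho(x))>0$, so $spt(\mu_m+\mu_0)\cap B_\rho(x)\neq\emptyset$ for large $m$, and a diagonal choice as $\rho\downarrow 0$ gives $x_m\in spt(\mu_m+\mu_0)$ with $x_m\to x$. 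For the reverse inclusion, let $x_m\in spt(\mu_m+\mu_0)$ with $x_m\to x$. By \eqref{eq:2_3} and \eqref{eq:2_4} the integral varifolds $\mu_m+\mu_0=\mu_{V^0_{f_{m,0}}}$ have weak mean curvature in $L^2$ with $\int|H|^2\,d(\mu_m+\mu_0)$ and $(\mu_m+\mu_0)(\R^3)$ bounded solely in terms of $E$; Simon's monotonicity formula \cite{Simon} then supplies constants $c_0>0$ and $\rho_0>0$, depending only on $E$, with $(\mu_m+\mu_0)(B_\rho(z))\geq c_0\rho^2$ for all $z\in spt(\mu_m+\mu_0)$ and $0<\rho<\rho_0$. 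Fixing $\rho<\rho_0$, for $m$ large we have $B_{\rho/2}(x_m)\subset B_\rho(x)$, so $(\mu_m+\mu_0)(\overline{B_\rho(x)})\geq c_0\rho^2/4$; passing to the limit via $\limsup_m\nu_m(C)\leq\nu(C)$ for closed $C$ gives $(\mu+\mu_0)(\overline{B_\rho(x)})\geq c_0\rho^2/4>0$. As $\rho\in(0,\rho_0)$ was arbitrary, every ball about $x$ carries positive $(\mu+\mu_0)$-mass, i.e.\ $x\in spt(\mu+\mu_0)$.

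The one step that is not pure measure theory is the uniform lower density bound for $\mu_m+\mu_0$ at points of its support, i.e.\ the borderline case of the monotonicity formula for integral $2$-varifolds with $L^2$-mean curvature; this is precisely what the uniform energy bound \eqref{eq:2_3}, combined with \eqref{eq:2_4}, was arranged to deliver, and it is the ingredient I would be most careful to quote correctly. Everything else is weak convergence of Radon measures together with the uniform diameter bound \eqref{eq:2_7}.
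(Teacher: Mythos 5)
Your overall plan is the right one, and it matches what the paper does (the paper supplies no argument of its own but simply cites Schätzle's Prop.~2.1, so the relevant comparison is with the structure of that proof). The identification $spt(\mu_m+\mu_0)=f_{m,0}(\Sigma_\oplus)$, the inclusion $\Gamma\subset spt\,\mu_0\subset spt(\mu+\mu_0)$, and the "$\subset$'' direction via lower semicontinuity on open balls are all fine and are exactly the easy part of the argument.

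The gap is in the "$\supset$'' direction, and it is precisely the step you single out. You assert that Simon's monotonicity formula yields $c_0,\rho_0>0$ \emph{depending only on} $E$ with $(\mu_m+\mu_0)(B_\rho(z))\geq c_0\rho^2$ for all $z\in spt(\mu_m+\mu_0)$ and $0<\rho<\rho_0$. That uniform lower density bound is false in the borderline case $H\in L^2$, $n=2$: from the monotonicity inequality one only obtains, for $z$ in the support,
\begin{equation*}
\pi \;\leq\; c\Bigl(\rho^{-2}\mu(B_\rho(z)) + \int_{B_\rho(z)}|H|^2\,d\mu\Bigr),
\end{equation*}
so the density ratio is bounded below only once $\int_{B_\rho(z)}|H|^2\,d\mu$ is smaller than a fixed threshold, and the scale at which this happens depends on $z$ and $m$, not only on $E$. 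A shrinking round sphere $\partial B_{\varepsilon_m}(\xi)$, $\varepsilon_m\to 0$, has $\int|H|^2=16\pi$ uniformly bounded, yet at a fixed radius $\rho$ the density ratio $\rho^{-2}\mu_m(B_\rho(z))=4\pi\varepsilon_m^2/\rho^2\to 0$. So the bound in the form you quote cannot be "supplied by Simon's monotonicity formula'' alone.

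What rescues the statement in this setting is additional structure you have not used: $spt(\mu_m+\mu_0)=f_{m,0}(\Sigma_\oplus)$ is connected and always contains the fixed set $f_0(\Sigma_0)\supset\Gamma$, so a piece of surface carrying a point $x_m$ far from $\Gamma$ cannot shrink away — a vanishing bubble at a fixed distance $\delta>0$ from the rest of the surface would have to be joined by a long thin neck, and a neck of length $\gtrsim\delta$ and vanishing width has $\int|H|^2\to\infty$ (cylinder-like) or is excluded by Gauss–Bonnet (minimal-like), contradicting \eqref{eq:2_3}–\eqref{eq:2_4}. Alternatively, one passes to a further subsequence so that $|H_{m,0}|^2(\mu_m+\mu_0)\to\alpha$ weakly; for every $x$ with $\alpha(\{x\})$ below the threshold (all but finitely many points) the monotonicity argument applies at a small enough $x$-dependent scale and yields $x\in spt(\mu+\mu_0)$, and the finitely many exceptional points need the connectedness argument. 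Either route requires an input beyond "monotonicity plus $\theta\geq 1$''. So your proof sketch has the right skeleton, but the crucial density estimate is stated in a form that is too strong and would need to be replaced by one of these more careful variants — which is presumably exactly what Schätzle's Prop.~2.1, cited by the paper, provides.
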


\section{Partial Regularity}
\label{sec:3}
In this chapter we will show $C^{1,\alpha}$ regularity close to good points. 
Our proof strongly follows the argument of Schätzle in \cite[Prop. 2.2]{Schaetzle} respectively Simon in \cite[pp. 298-303]{Simon}. It needs some modifications, 
which will be highlighted in the exposition. We also repeat some details, which we will have to refer to in section \ref{sec:4}, when proving lower semicontinuity.\\
A good point $x_0\in spt(\mu+\mu_0)$ is defined as essentially having
\begin{equation}
\label{eq:3_1}
 \nu_0(x_0)< \varepsilon_0^2
\end{equation}
for an $\varepsilon_0>0$ small enough. \eqref{eq:2_6} yields only finitely many bad points, i.e. points which do not satisfy the following requirement. The precise proposition is as follows.

\begin{lemma}[cf. Prop. 2.2 in \cite{Schaetzle}]
 \label{3_1}
 For any $\varepsilon>0$ there exist $\varepsilon_0 = \varepsilon_0(E,{\bf n},\Gamma,\varepsilon)>0$, $\theta=\theta(E,{\bf n},\Gamma,\varepsilon)>0$, $\rho_0=\rho_0(E,{\bf n},\Gamma,\varepsilon)>0$,
 $\beta=\beta(E,{\bf n},\Gamma)>0$, such that for every good point $x_0\in spt(\mu+\mu_0)$ and good radius $0<\rho_{x_0}\leq \rho_0$ satisfying
 \begin{equation}
 \label{eq:3_2}
  \nu_0(\overline{B_{\rho_{x_0}}(x_0)}) < \varepsilon_0^2,
 \end{equation}
 $\mu+\mu_0$ is a union of $(W^{2,2}\cap C^{1,\beta})$-graphs in $B_{\theta\rho_{x_0}}(x_0)$ 
 of functions $u_i\in (W^{2,2}\cap C^{1,\beta})(B_{\theta \rho_{x_0}}(x_0)\cap L_i)$. 
 Here $L_i\subset \R^3$ are two dimensional affine spaces and $i=1,\ldots, I_{x_0}\leq C(E,\lambda, H_0)$. 
 Furthermore the $u_i$ satisfy the following estimate
 \begin{align}
  \begin{split}
  \label{eq:3_3}
  &(\theta \rho_{x_0})^{-1}\|u_i\|_{L^\infty\left(B_{\theta\rho_{x_0}}(x_0)\cap L_i\right)}\\
  +\ & \|\nabla u_i\|_{L^\infty\left(B_{\theta\rho_{x_0}}(x_0)\cap L_i\right)} + (\theta\rho_{x_0})^\beta h\ddot{o}l_{B_{\theta\rho_{x_0}}(x_0)\cap L_i,\beta}\nabla u_i\leq \varepsilon.
 \end{split}
 \end{align}
Moreover we have a power-decay for the second fundamental form, i.e. $\forall x\in B_{\frac{\theta\rho_{x_0}}{4}}(x_0)$, $0<\rho<\frac{\theta \rho_{x_0}}{4}$
\begin{equation}
 \label{eq:3_4}
 \int_{B_\rho(x)}|A_{\mu+\mu_0}|^2\, d(\mu+\mu_0)\leq C(E,{ \bf n},\Gamma)(\varepsilon_0^2+\rho_0^2)\rho^\beta\rho_{x_0}^{-\beta}.
\end{equation}
\end{lemma}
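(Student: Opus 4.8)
The plan is to follow the Simon--Sch\"atzle graphical decomposition scheme adapted to the oriented varifold setting, treating the energy bound \eqref{eq:2_6} as the substitute for the Willmore curvature bound. First I would fix a good point $x_0$ and a good radius $\rho_{x_0}$ as in \eqref{eq:3_2}, and rescale so that one works on the unit ball. The starting point is the monotonicity formula for integral varifolds with $L^2$ mean curvature (here $H_{\mu+\mu_0}$ is controlled by $|A_{\mu+\mu_0}|$ and the energy bound $E$), which gives upper and lower area ratio bounds and, combined with the smallness of $\nu_0$ on $\overline{B_{\rho_{x_0}}(x_0)}$, forces the varifold to be $L^2$-close to a union of planes through $x_0$. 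The number $I_{x_0}$ of such planes (sheets) is bounded by the density, which in turn is bounded via monotonicity by $C(E,\lambda,H_0)$; this is where the $\lambda>0$ term is essential, since it bounds the mass.

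Next I would invoke Simon's graphical decomposition lemma (\cite[Lemma on pp.~298--303]{Simon}), which says: given an integral $2$-varifold in $\R^3$ with $\int |A|^2$ small on a ball and the $L^2$-flatness small, the varifold decomposes in a smaller ball $B_{\theta\rho_{x_0}}(x_0)$ into finitely many Lipschitz graphs over affine planes $L_i$, with small Lipschitz constants and small $L^2$-norm of the Hessian, plus a small ``pimple'' error. The modification relative to Sch\"atzle is that near $\Gamma$ the varifold $\mu+\mu_0$ contains the glued reference surface $f_0$, whose contribution is smooth and of controlled curvature; one peels this off, so that the genuinely singular part is $\mu$ and the decomposition is applied to $\mu+\mu_0$ with the boundary sheet being a graph of a fixed smooth function. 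To upgrade Lipschitz to $C^{1,\beta}\cap W^{2,2}$ and obtain the quantitative bound \eqref{eq:3_3}, I would run the standard iteration: each graph function $u_i$ satisfies an elliptic system of the form $\Delta u_i = (\text{lower order in } \nabla u_i) + (\text{error from the other sheets and the curvature measure})$, and a Campanato/Morrey-type iteration on the excess and the tilt-excess yields power decay of $\int_{B_\rho}|A_{\mu+\mu_0}|^2$, which is exactly \eqref{eq:3_4}; feeding \eqref{eq:3_4} back into the graphical equation gives H\"older continuity of $\nabla u_i$ with the stated exponent $\beta$ and the smallness \eqref{eq:3_3} by choosing $\varepsilon_0,\rho_0$ small.

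The decay estimate \eqref{eq:3_4} is obtained by the familiar ``blow-up / comparison'' argument: if it failed there would be a sequence of good points and radii along which the rescaled excess does not decay, but the rescaled varifolds converge to a stationary (or at least $C^{1,\alpha}$) limit which is a union of planes, whose excess decays at the optimal rate, a contradiction; the presence of the $H_0$ and $\lambda$ terms only perturbs the first variation by lower-order terms (as already used in \eqref{eq:2_4}), so the limit is still flat. I would extract $\beta=\beta(E,\mathbf{n},\Gamma)$ from this iteration as the usual small Campanato exponent, independent of $\varepsilon$.

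The main obstacle I expect is the interaction with the boundary $\Gamma$: near boundary points the varifold $\mu+\mu_0$ is a union of sheets, some of which meet $\Gamma$, and one must ensure that the graphical decomposition respects the glued structure and that the Dirichlet/conormal condition \eqref{eq:2_2} passes to the limit in a way compatible with the graph functions $u_i$. This is precisely the point where Sch\"atzle's argument \cite[Prop.~2.2]{Schaetzle} needs modification, because the reference surface $f_0$ introduces a prescribed non-flat sheet, and one must check that the monotonicity and flatness estimates still localize correctly when the competitor ``sheet'' is only $C^2$ rather than planar; handling the error terms uniformly in $m$ there — and making sure the number of sheets $I_{x_0}$ stays bounded near $\Gamma$ — is the delicate part. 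The curvature integrand being a sum of two squares (rather than a single square as for Willmore) does not cause trouble here since \eqref{eq:2_4} already absorbs the cross term, so all the estimates in this lemma go through with constants depending additionally on $H_0$ and $\lambda$.
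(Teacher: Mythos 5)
Your overall framing---graphical decomposition, area-ratio bounds via monotonicity, then Allard's theorem fed by a power decay of $\int |A|^2$---is in the right spirit, but it misses the mechanism by which the paper actually obtains the decay \eqref{eq:3_4}, and the mechanism it does use is the whole point of the lemma. Three ingredients you do not mention are essential.

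First, the paper carries out the decay estimate entirely at the level of the \emph{minimising sequence} $f_{m,0}$, not the limit varifold $\mu+\mu_0$: graphical decomposition (Lemma \ref{A_6}) is applied to the immersions $f_{m,0}$ (and then again on the smaller scale $\rho$), and the decay is first proved for the limit measure $\nu_0$ of $|A_{m,0}|^2(\mu_m+\mu_0)$. Since $|A_{\mu+\mu_0}|^2(\mu+\mu_0)\le\nu_0$ by \eqref{eq:2_6}, this gives \eqref{eq:3_4}. Second, the mechanism for the Caccioppoli-type inequality is a \emph{variational comparison}: on a good slice $\sigma\in S_m$ one replaces the graph $v_{m,i,\ell}$ inside $B_\sigma$ by the biharmonic function $w_{m,i,\ell}$ with the same first-order boundary data (Lemma \ref{A_2}, resp.\ the trace extension Lemma \ref{A_3} at $\Gamma$), and then uses that $f_{m,0}$ is (almost) energy-minimising to conclude that the original piece has energy $\le$ the replaced piece up to $\varepsilon_m$. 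Third, to convert the Helfrich energy bound \eqref{eq:2_4} (which controls $|H_m|^2$) into the needed $\int|A_m|^2$ control one uses the Gauss--Bonnet trick: $\int K_m\,d\mu_g$ is determined by the boundary data, so $f_{m,0}$ also minimises $W_{H_0,\lambda}+\kappa\int K$; this absorbs the Gauss curvature term and makes the comparison close, after which hole-filling and the iteration lemma give the power $\beta$.

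Your proposed alternative---a blow-up/contradiction argument yielding excess decay directly for $\mu+\mu_0$---does not obviously close here. The object one must control is the Radon measure $\nu_0$, which is only known to dominate $|A_{\mu+\mu_0}|^2(\mu+\mu_0)$ and may concentrate; a blow-up of $\mu+\mu_0$ alone does not see $\nu_0$. Moreover, without the minimising property one has no reason for the varifold to satisfy any decay beyond what the $L^2$ curvature bound gives, and that alone does not yield \eqref{eq:3_4} (indeed, it cannot, since the estimate must fail at the bad points). So the minimising property is not a technical convenience here: it is the reason the estimate holds at good points and the reason one must argue on the sequence. If you want to keep a contradiction-style argument you would need to build the minimising property and the biharmonic/trace comparison into the rescaled sequence, at which point you are essentially reproving Simon's argument. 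I would recommend replacing the blow-up step with the explicit comparison as in \cite[pp.~298--303]{Simon} and \cite[Prop.~2.2]{Schaetzle}, noting where the Gauss--Bonnet identity enters to handle the two squares in \eqref{eq:2_1}.
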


\begin{proof}
Our goal is to verify the assumptions of Allard's regularity Theorem \ref{A_1}. To do this we need to decompose $\mu+\mu_0$ into parts with Hausdorff density $1$. For this we need
Simon's graphical decomposition Theorem for immersions \ref{A_6} (cf. \cite[Lemma 2.1]{Simon}).
 By \eqref{eq:3_2} and the upper semicontinuity for measure convergence evaluated for closed sets (see e.g. \cite[Prop. 4.26]{Maggi}) we have
 \begin{equation}
  \label{eq:3_5}
  \limsup_{m\rightarrow\infty}\int_{B_{\rho_{x_0}}(x_0)}|A_{m,0}|^2\, d(\mu_m+\mu_0)\leq \nu_0(\overline{B_{\rho_{x_0}}(x_0)}) < \varepsilon_0^2
 \end{equation}
and we can apply the decomposition Theorem \ref{A_6} to $f_{m,0}$ for $m$ big enough. 
Therefore we decompose $f_{m,0}^{-1}(\overline{B_{\frac{\rho_{x_0}}{2}}(x_0)})$
for large $m$ and $\varepsilon_0=\varepsilon_0(E)>0$ small enough into closed pairwise disjoint sets $D_{m,i}\subset \Sigma_\oplus$, $i=1,\ldots,I_m$, $I_m\leq C E$:
\begin{equation*}
 f_{m,0}^{-1}(\overline{B_{\frac{\rho_{x_0}}{2}}(x_0)})=\sum_{i=1}^{I_m} D_{m,i} = \dot \bigcup_{i=1}^{I_m}D_{m,i}.
\end{equation*}
More precisely there exist affine $2$-planes $L_{m,i}\subset \R^3$ and smooth functions $u_{m,i}:\overline{\Omega_{m,i}}\subset L_{m,i}\rightarrow L_{m,i}^\perp$, $i=1,\ldots, I_m$. 
Here $\Omega_{m,i}=\Omega^0_{m,i}\setminus \cup_k d_{m,i,k}$, $\Omega^0_{m,i}$ are simply connected and open and $d_{m,i,k}$ are closed, pairwise disjoint, topological discs such that the $u_{m,i}$ satisfy
\begin{equation*}
 \rho_{x_0}^{-1}|u_{m,i}| + |\nabla u_{m,i}|\leq C(E)\varepsilon_0^{\frac{1}{22}}
\end{equation*}
for all $m,i$. Furthermore we have closed, pairwise disjoint, topological discs $P_{m,i,j}\subset D_{m,i}$, $j=1,\ldots,J_{m,i}$ (see Figure \ref{fig_2}) satisfying
\begin{equation*}
 f_{m,0}\left(D_{m,i}-\bigcup_{j=1}^{J_{m,i}}P_{m,i,j}\right)=\operatorname{graph}(u_{m,i})\cap\overline{B_{\frac{\rho_{x_0}}{2}}(x_0)}
\end{equation*}
and
\begin{equation}
\label{eq:3_6}
 \sum_{i=1}^{I_m}\sum_{j=1}^{J_m} \operatorname{diam} f_{m,0}(P_{m,i,j})\leq C(E)\varepsilon_0^{\frac{1}{2}}\rho_{x_0}.
\end{equation}

As in \cite[Eq. (2.12)-(2.14)]{Schaetzle} we obtain for $\varepsilon_0$ small enough $0<\tau<\frac{1}{2}$ and $0<\theta<\frac{1}{4}$ such that
\begin{equation}
 \label{eq:3_7}
 \frac{\mu_{g_{m,0}}(D_{m,i}\cap f_{m,0}^{-1}(B_{\sigma}(x)))}{w_2\sigma^2}<1+\tau
\end{equation}
for $B_\sigma(x)\subset B_{\theta\rho_{x_0}}(x_0)$ arbitrary. 
Here $w_2$ denotes the Hausdorff measure of the $2$-dimensional euclidean unit ball and $\mu_{g_{m,0}}$ the area measure on $\Sigma_\oplus$ induced by $f_{m,0}$. 
Hence $f_{m,0}|_{D_{m,i}\cap f_{m,0}^{-1}(B_{\theta\rho_{x_0}}(x_0))}$ is an embedding.
As in \cite[Eq. (2.14)-(2.16)]{Schaetzle} the density estimate \eqref{eq:3_7} can be extended to $\mu+\mu_0$. 
We repeat these steps here, because we need the result in section \ref{sec:4}.
Let us define the following Radon measures
\begin{align}
 \begin{split}
 \label{eq:3_8}
 \mu_{m,i}&:=\mathcal{H}^2\lfloor f_{m,0}(D_{m,i}\cap f_{m,0}^{-1}(B_{\theta\rho_{x_0}}(x_0)))\\ &= f_{m,0}\left(\mu_{g_{m,0}}\lfloor (D_{m,i}\cap f_{m,0}^{-1}(B_{\theta\rho_{x_0}}(x_0)))\right).
\end{split}
 \end{align}
\eqref{eq:3_7} shows
\begin{equation}
\label{eq:3_7_1}
 \sum_{i=1}^{I_m}\mu_{m,i} = (\mu_{m}+\mu_0)\lfloor B_{\theta\rho_{x_0}}(x_0).
\end{equation}
Now we take a subsequence depending on $x_0$, $\theta\rho_{x_0}$, may assume $I_m=I$ and get by the usual compactness property of Radon measures as in \cite[Eq. (2.15)]{Schaetzle}
\begin{equation}
 \label{eq:3_9}
 \mu_{m,i}\rightarrow \mu_i\mbox{ weakly as varifolds in }B_{\theta\rho_{x_0}}(x_0).
\end{equation}
Also as in \cite[Eq. (2.15)]{Schaetzle} we obtain
\begin{equation}
\label{eq:3_10}
 spt (\mu_{m,i})\rightarrow spt (\mu_i)\mbox{ locally in Hausdorff distance in }B_{\theta\rho_{x_0}}(x_0)
\end{equation}
for $i=1,\ldots I$. Now pick an arbitrary $\varphi\in C_0^0(B_{\theta\rho_{x_0}}(x_0))$. Then \eqref{eq:3_7_1} yields
\begin{equation}
\label{eq:3_10_1}
 \int\varphi\, d(\mu+\mu_0)\leftarrow\int \varphi\, d(\mu_m+\mu_0) = \sum_{i=1}^I \int \varphi\, d\mu_{m,i}\rightarrow \sum_{i=1}^I\int \varphi\, d\mu_i,
\end{equation}
by approximating the positive and negative part of $\varphi$ monotonically with simple functions, using the monotone convergence theorem and exploiting that the $\mu_{m,i}$ and $\mu_i$ are finite.
The uniqueness part of the Riesz representation theorem now shows
\begin{equation}
 \label{eq:3_11}
 (\mu+\mu_0)\lfloor B_{\theta\rho_{x_0}}(x_0) = \sum_{i=1}^I\mu_i.
\end{equation}
By passing \eqref{eq:3_7} to the limit we also obtain (see \cite[Eq. (2.16)]{Schaetzle})
\begin{equation}
 \label{eq:3_12}
 \frac{\mu_i(B_\sigma(x))}{w_2\sigma^2}\leq 1+\tau\quad \forall\ \overline{B_\sigma(x)}\subset B_{\theta\rho_{x_0}}(x_0). 
\end{equation}
Here we again use the upper semicontinuity for measure convergence evaluated on closed sets (see e.g. \cite[Prop. 4.26]{Maggi}).
Hence assumption \eqref{eq:A_3} of Allard's integral compactness Theorem \ref{A_1} is fullfiled. The next step is to prove a power decay as in \eqref{eq:A_2}.
Then all assumptions on Allard's regularity Theorem \ref{A_1} will be satisfied. 
For this we need to concentrate on a specific $i\in\{1,\ldots,I\}$. 
We also need to make a distinction between boundary points and inner points. 
Let us first assume that $x_0\notin\Gamma$. By the compactness of $\Gamma$ we can additionally assume 
\begin{equation}
 \label{eq:3_13}
0< \rho_{x_0}\leq d(x_0,\Gamma).
\end{equation}
Then $f_{m,0}(D_{m,i})\cap \Gamma =\emptyset$, hence $D_{m,i}\subset \Sigma_0$ or $D_{m,i}\cap \overline{\Sigma_0}=\emptyset$. 
In the first case we do not need to show anything, since $f_{m,0}$ is independent of $m$ on $\Sigma_0$ and smooth on $\Sigma_0$. 
Let us proceed with the inner regularity estimates, i.e. $D_{m,i}\cap \overline{\Sigma_0}=\emptyset$:\\

{\bf Inner regularity:}\\

Let us define 
\begin{equation}
 \label{eq:3_14}
 C_\sigma^{m,i}(x_0):=\{x+y:\ x\in B_\sigma(x_0)\cap L_{m,i}, y\in L_{m,i}^\perp\}.
\end{equation}
Let us choose $0<\rho<\theta\rho_{x_0}$ fixated but arbitrary. 
We need to apply the graphical decomposition Lemma \ref{A_6} again to $f_{m,0}(D_{m,i})\cap \overline{B_\rho(x_0)}$. 
Hence we obtain
smooth functions $v_{m,i,\ell}:\overline{\tilde{\Omega}_{m,i,\ell}}\subset \tilde{L}_{m,i,\ell}\rightarrow \tilde{L}_{m,i,\ell}^\perp$, $\tilde{L}_{m,i}\subset\R^3$ $2$-dimensional planes
($\ell =1,\ldots N_{m,i}\leq CE$),
$\tilde{\Omega}_{m,i}=\tilde{\Omega}^0_{m,i}\setminus\cup_k \tilde{d}_{m,i,\ell,k}$, $\tilde{\Omega}^0_{m,i}$ simply connected and $\tilde{d}_{m,i,\ell,k}$ closed pairwise disjoint discs.
Furthermore we have
\begin{equation}
\label{eq:3_14_1}
 \rho^{-1}|v_{m,i,\ell}| + |\nabla v_{m,i,\ell}|\leq C(E)\varepsilon_0^\frac{1}{22}
\end{equation}
and closed pairwise disjoint discs $\tilde{P}_{m,i,\ell,1},\ldots, \tilde{P}_{m,i,\ell,J_{m,i,\ell}}\subset \tilde{D}_{m,i,\ell}$ such that for all $\ell$
\begin{equation}
\label{eq:3_14_2}
 f_{m,0}\left(\tilde{D}_{m,i,\ell}-\bigcup_{j=1}^{J_{m,i,\ell}}\tilde{P}_{m,i,\ell,j}\right)\cap\overline{B_{\rho}(x_0)}=\operatorname{graph}(v_{m,i,\ell})\cap\overline{B_{\rho}(x_0)}
\end{equation}
These $\tilde{P}_{m,i,\ell,j}$ also satisfy the following estimate
\begin{equation}
 \label{eq:3_15}
 \sum_{j=1}^{J_{m,i,\ell}} \operatorname{diam} f_{m,0}(\tilde{P}_{m,i,\ell,j})\leq C(E)\varepsilon_0^\frac{1}{2}\rho \leq {\frac{1}{8}}\rho,
\end{equation}
if we choose $C(E)\varepsilon_0^\frac{1}{2}<\frac{1}{8}$.
Let us also introduce the corresponding Radon measures similar to \eqref{eq:3_8}
\begin{equation}
\label{eq:3_15_1}
 \tilde{\mu}_{m,i,\ell}:=\mathcal{H}^2\lfloor f_{m,0}(\tilde{D}_{m,i,\ell}) = \mu_{m,i}\lfloor f_{m,0}(\tilde{D}_{m,i,\ell}).
\end{equation}
Since $f_{m,0}|_{D_{m,i}\cap f^{-1}_{m,0}(B_{\theta\rho_{x_0}}(x_0))}$ is an embedding, we also have
\begin{equation}
\label{eq:3_15_2}
 \sum_{\ell=1}^{N_{m,i}} \tilde{\mu}_{m,i,\ell} = \mu_{m,i}\lfloor B_\rho(x_0).
\end{equation}
Inequality \eqref{eq:3_15} yields $\mathcal{L}^1$-measurable sets $S_m\subset (\frac{1}{2}\rho,\frac{3}{4}\rho)$, such that $\forall j=1,\ldots,J_{m,i,\ell}$ (see also Figure \ref{fig_2})
\begin{equation*}
 |S_m|\geq \frac{1}{8}\rho\mbox{ and }\forall \sigma\in S_m:\ \partial C_\sigma^{m,i}(x_0)\cap f(\tilde{P}_{m,i,\ell,j})=\emptyset.
\end{equation*}
\begin{figure}[h] 
\centering 
\includegraphics{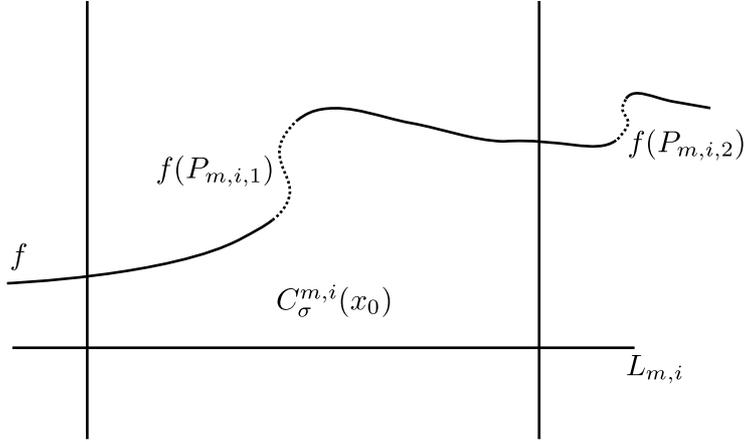}  
\caption{Formation of pimples in graphical decomposition.}
\label{fig_2}
\end{figure}
Therefore $v_{m,i,\ell}|_{\partial B_\sigma(x_0)\cap L_{m,i}}$ and $\nabla v_{m,i,\ell}|_{\partial B_\sigma(x_0)\cap L_{m,i}}$ are well defined for any $\sigma\in S_m$. 
Hence Lemma \ref{A_2} is applicable and yields a biharmonic $w_{m,i,\ell}:B_\sigma(x_0)\cap \tilde{L}_{m,i,\ell}\rightarrow \tilde{L}_{m,i,\ell}^\perp$
with Dirichlet boundary data given by $v_{m,i,\ell}$ and $\nabla v_{m,i,\ell}$. \\
From this point on we will have to deviate from the proof given by Simon \cite[Thm. 3.1]{Simon} resp. Schätzle \cite[Prop. 2.2]{Schaetzle} but our reasoning is still inspired by their argument.\\ 
Let us denote with $A^w_{m,i,\ell}$ the second fundamental form, $H^w_{m,i,\ell}$ the mean curvature vector, $\xi_{w_{m,i,\ell}}$ the orientation and with $K^w_{m,i,\ell}$ the Gauss curvature of $\operatorname{graph}(w_{m,i,\ell})$.
By Gauss-Bonnet $\int_\Sigma K_m\, d\mu_g$ is given entirely by the boundary data (cf. \cite[Remark 2]{DeckGruRoe}). 
Hence $f_{m,0}$ is also a minimising sequence for $f\mapsto W_{H_0,\lambda}(f)+\kappa \int_\Sigma K_f\, d\mu_g$, $\kappa\in\R$ arbitrary. 
Here $K_f$ denotes the Gauss curvature of a given immersion $f:\Sigma\rightarrow\R^3$. Hence for $\sigma\in S_m$ we obtain (see e.g. \cite[Eq. (11)]{DeckGruRoe} for a formula for $|A_m|^2$)
{\allowdisplaybreaks
\begin{align*}
 &\int_{B_{\sigma}(x_0)}|A_m|^2\,d\tilde{\mu}_{m,i,\ell} \\
 =&  \int_{B_{\sigma}(x_0)}|H_m|^2\,d\tilde{\mu}_{m,i,\ell} - 2 \int_{B_{\sigma}(x_0)}K_m\,d\tilde{\mu}_{m,i,\ell}\\
 \overset{\eqref{eq:2_4}}{\leq}& C(H_0,\lambda) \bigg(\int_{B_{\sigma}(x_0)}|H_m-H_0(*\xi_{f_{m,0}})|^2\,d\tilde{\mu}_{m,i,\ell}+ \lambda \tilde{\mu}_{m,i,\ell}(B_\sigma(x_0))\bigg) 
 \\&+  2\int_{B_{\sigma}(x_0)}K_m\,d\tilde{\mu}_{m,i,\ell}\\
 =& C\bigg(\int_{B_{\sigma}(x_0)}|H_m-H_0(*\xi_{f_{m,0}})|^2\,d\tilde{\mu}_{m,i,\ell} + \lambda \tilde{\mu}_{m,i,\ell}(B_\sigma(x_0)) \\
 &+  \frac{2}{C}\int_{B_\sigma(x_0)}K_m\,d\tilde{\mu}_{m,i,\ell}\bigg)\\
 \leq& C\bigg(\int_{\operatorname{graph}(w_{m,i,\ell})}|H^w_{m,i,\ell}-H_0(*\xi_{w_{m,i,\ell}})|^2\, d\mathcal{H}^2\\
 &+ \lambda \mathcal{H}^2(\operatorname{graph}(w_{m,i,\ell})) + \frac{2}{C}\int_{\operatorname{graph}(w_{m,i,\ell})}K^w_{m,i,\ell}\,d\mathcal{H}^2\bigg)+\varepsilon_m\\
 \leq& C\bigg(\int_{\operatorname{graph}(w_{m,i,\ell})}|H^w_{m,i,\ell}|^2\, d\mathcal{H}^2 + \lambda\mathcal{H}^2(\operatorname{graph}(w_{m,i,\ell}))\\
 & + \frac{2}{C}\int_{\operatorname{graph}(w_{m,i,\ell})}K^w_{m,i,\ell}\,d\mathcal{H}^2\bigg)+\varepsilon_m\\
 \overset{\ref{A_5}}{\leq}& C\left(\int_{B_\sigma(x_0)\cap \tilde{L}_{m,i,\ell}}|D^2w_{m,i,\ell}|^2\, dx + \sigma^2 +\sigma\right) + \varepsilon_m\\
 \overset{\ref{A_2}}{\leq}& C\sigma \int_{\graph\left(v_{m,i,\ell}|_{\partial B_\sigma(x_0)\cap \tilde{L}_{m,i,\ell}}\right)}|A_{m,i}|^2d\mathcal{H}^1 + C\sigma + C\sigma^2+\varepsilon_m
\end{align*}}
for $\varepsilon_m\rightarrow 0$. Here we used $|A^w_{m,i,\ell}|^2,|K^w_{m,i,\ell}|\leq C|D^2w_{m,i,\ell}|^2$, which can be seen by the formulas given in e.g. \cite[Subsection 2.1]{DeckGruRoe}.
Integrating over $S_m$ yields with Co-Area formula (see e.g. \cite[Eq. (10.6)]{Simon_Buch})
\begin{equation*}
 \int_{B_{\frac{\rho}{2}}(x_0)}|A_m|^2\, d\tilde{\mu}_{m,i,\ell} \leq C\int_{B_{\frac{3}{4}\rho}(x_0)\setminus B_{\frac{\rho}{2}}(x_0)}|A_m|^2\,d\tilde{\mu}_{m,i,\ell} + C\rho + C\rho^2+\varepsilon_m.
\end{equation*}
Summing over $\ell =1,\ldots N_{m,i}\leq CE$ yields with \eqref{eq:3_15_2}
\begin{equation*}
 \int_{B_{\frac{\rho}{2}}(x_0)}|A_m|^2\, d\mu_{m,i} \leq C\int_{B_{\frac{3}{4}\rho}(x_0)\setminus B_{\frac{\rho}{2}}(x_0)}|A_m|^2\, d\mu_{m,i} + C\rho + C\rho^2+\varepsilon_m.
\end{equation*}
By hole filling, i.e. adding $C$ times the left-handside to the inequality we obtain with  $\gamma:=\frac{C}{C+1}<1$
\begin{equation*}
 \int_{B_{\frac{\rho}{2}}(x_0)}|A_m|^2d\mu_{m,i}\leq\gamma \int_{B_{\frac{3}{4}\rho}(x_0)}|A_m|^2d\mu_{m,i} + C\rho+C\rho^2+\varepsilon_m.
\end{equation*}
For $m\rightarrow\infty$ we can again employ the semicontinuity properties of measure convergence (see e.g. \cite[Prop. 4.26]{Maggi}) and get
\begin{equation*}
 \nu_0(B_{\frac{\rho}{2}}(x_0))\leq \gamma \nu_0(B_\rho(x_0)) + C\rho^2+C\rho.
\end{equation*}
Since all the argument needed was an estimate of the form of 
\begin{equation*}
 \nu(\overline{B_{\rho_{x_0}}(x_0)})\leq \varepsilon_0^2
\end{equation*}
we can repeat the argument for $x\in B_{\frac{\rho_{x_0}}{4}}(x_0)\cap \operatorname{spt}(\mu+\mu_0)$ and $\rho_x:=\frac{\rho_{x_0}}{4}$ (cf. \cite[p. 300]{Simon}). 
Then we obtain for every $0<\rho<\theta \frac{\rho_{x_0}}{4}$
\begin{equation*}
 \nu_0(B_{\frac{\rho}{2}}(x))\leq \gamma \nu_0(B_\rho(x)) + C\rho^2+C\rho
\end{equation*}
with $0<\gamma<1$.
An iteration argument (see e.g. \cite[Lemma 8.23]{GilbargTrudinger}) yields
\begin{equation}
\label{eq:3_17}
 \nu_0(B_\rho(x))\leq C\rho^\beta \rho_{x_0}^{-\beta}(\nu_0(B_{\rho_{x_0}}(x_0)) + \rho_{x_0}^2),\quad \forall 0<\rho<\theta \frac{\rho_{x_0}}{4}
\end{equation}
for some $\beta>0$.
By choosing $\varepsilon_0>0$, $\theta>0$ and $\rho_0>0$ small enough Allard's regularity Theorem \ref{A_1} yields as in \cite[Prop. 2.2, p. 283 bottom]{Schaetzle} $\mu_{i}$ (cf. \eqref{eq:3_9}) to be a $C^{1,\alpha}\cap W^{2,2}$-graph satisfying the estimate \eqref{eq:3_3}.
This concludes the inner regularity.\\

{\bf Regularity at the boundary:}\\

Let us now assume $x_0\in \Gamma$. 
Our reasoning here will be in large parts analogue to the inner regularity, but we will use Theorem \ref{A_3} instead of Theorem \ref{A_2}.
The following preparation for proving an estimate analogue to \eqref{eq:3_17} is identical to \cite[p. 282]{Schaetzle} but we include it nevertheless since we need the notation.
Let $t:\Gamma\rightarrow \partial B_1(0)$ be a smooth tangent of $\Gamma$. 
Since $\Gamma$ consists of smooth embedded pairwise disjoint curves and is compact, there is a $0<\rho_\Gamma(\tau_0)<\infty$ for $\tau_0>0$ independent of $x_0$ satisfying
\begin{equation*}
 |t(x)-t(x_0)|\leq \tau_0\quad \forall x\in\Gamma\cap B_{\rho_\Gamma}(x_0).
\end{equation*}
If we choose $\tau_0$ small enough we even obtain for every $0< \rho\leq \rho_\Gamma$, that
$\Gamma\cap B_\rho(x_0)$ is connected and intersects $\partial B_\rho(x_0)$ transversally.
Hence by assuming $\rho_{x_0}\leq\rho_0\leq\rho_\Gamma$, \eqref{eq:3_7} yields exactly one $i\in\{1,\ldots I\}$ with $\partial \Sigma\cap D_{m,i} \neq\emptyset$. 
Every other $i$ can be dealt with the inner regularity argument, since $D_{m,i}\cap \partial\Sigma=\emptyset$.
By choosing a suitable subsequence w.l.o.g. this $i$ is constant.
By rotating and translating we assume $x_0=0$, $T_{x_0}f_{m,0}(\Sigma)=\R^2\times\{0\}$, $t(x_0)=e_1$ and ${\bf n}(x_0)=e_2$. 
Let us denote with $\pi:\R^3\rightarrow T_{x_0}f_{m,0}(\Sigma)=\R^2\times\{0\}$ the orthogonal projection and let $0<\rho<\theta\rho_{x_0}$ and $0<\sigma<\frac{3}{4}\rho$ be fixated but arbitrary.
Then for $\tau_0$ small enough
\begin{equation}
\label{eq:3_18}
 \pi\left( f_{m,0}(D_{m,i})\cap\Gamma\cap B_\rho(x_0)\cap\pi^{-1}(B_\sigma(x_0))\right)
\end{equation}
is a smooth connected curve in $B_\sigma(x_0)\cap(\R^2\times\{0\})$, which decomposes it and yields $B_\sigma(x_0)\cap(\R^2\times\{0\}) \setminus \pi\left( f_{m,0}(D_{m,i})\cap\Gamma\cap B_\rho(x_0)\cap\pi^{-1}(B_\sigma(x_0))\right)$
to be two connected components $B^\pm_\sigma(x_0)$ ($\pm e_2$ is the inner normal of $B^\pm_\sigma(x_0)$ at $x_0$). 
As in the inner regularity we have to apply the graphical decomposition Lemma \ref{A_6} to $f_{m,0}(D_{m,i})\cap B_\rho(x_0)$ and 
we will also use the same style of notation as in the inner regularity case.
This yields smooth functions $v_{m,i,\ell}$ with the same properties and notations stated in \eqref{eq:3_14_1}-\eqref{eq:3_15_2}.
Since 
\begin{equation*}
 \bigcup_\ell \tilde{D}_{m,i,\ell}\subset D_{m,i}
\end{equation*}
the argument above shows, that exactly for one $\ell$ we have $\tilde{D}_{m,i,\ell}\cap \partial\Sigma\neq\emptyset$.
The other $\ell$ can be dealt with as in the inner regularity case. 
In the rest of the proof we fixate this $\ell$ and only work with this index.
Furthermore we can assume that $\tilde{L}_{m,i,\ell}=\R^2\times\{0\}$, since it is the tangential space of $f_{m,0}$ at $x_0$.
By choosing $\rho_{x_0}$ small enough, $f_{m,0}|_{\tilde{D}_{m,i,\ell}}$ close to $\Gamma$ can be described by a graph satisfying an estimate as in \eqref{eq:3_14_1} with a prefactor of $\frac{1}{2}$. 
Hence we can arrange the pimples $\tilde{P}_{m,i,\ell,j}$ to not coincide with the boundary, i.e.
\begin{equation*}
 f_{m,0}(\tilde{D}_{m,i,\ell})\cap\Gamma\cap B_\rho(x_0)\subset\graph(v_{m,i,\ell})\cap B_\rho(x_0).
\end{equation*}
Otherwise we would use this graph as $v_{m,i,\ell}$.\\
By choosing $C(E)\varepsilon_0^\frac{1}{22}<\frac{1}{8}$, \eqref{eq:3_15} yields a measurable set $S_m\subset]\frac{\rho}{2},\frac{3\rho}{4}[$ with $\mathcal{L}^1(S_m)\geq\frac{\rho}{8}$ 
depending on $x_0$ and $\rho$ such that $\forall \sigma\in S_m$
\begin{equation*}
 f_{m,0}(\tilde{P}_{m,i,\ell,j})\cap \pi^{-1}(\partial B_\sigma^+(x_0))=\emptyset,\quad j=1,\ldots J_{m,i,\ell}.
\end{equation*}
The decomposition by \eqref{eq:3_18} also gives us
\begin{align*}
 f_{m,0}^{-1}(\graph(v_{m,i,\ell})\cap\pi^{-1}(B_\sigma^+(x_0)))\cap \tilde{D}_{m,i,\ell}&\subset \Sigma\\
 f_{m,0}^{-1}(\graph(v_{m,i,\ell})\cap\pi^{-1}(B_\sigma^-(x_0)))\cap \tilde{D}_{m,i,\ell}&\subset \mathring{\Sigma}_0.
\end{align*}
Hence Lemma \ref{A_3} yields a $w_m\in C^2(\overline{B^+_\sigma(x_0)})$ for $\sigma\in S_m$, which satisfies
\begin{equation*}
 w_m=v_{m,i,\ell},\ \nabla w_m=\nabla v_{m,i,\ell}\mbox{ on }\partial B_\sigma^+(x_0)
\end{equation*}
and the estimates \eqref{eq:A_6} and \eqref{eq:A_7}. By \eqref{eq:3_14_1} we get $|\nabla v_{m,i,\ell}|\leq 1$ for $\varepsilon_0>0$ small enough. 
Therefore we obtain $|D^2u_m|\leq C|A_m|$. 
As in the inner regularity case we will compare the Helfrich energy of $v_{m,i,\ell}$ to $w_{m}$ and denote the curvatures of $w_m$ by $A^w_{m}$, $K^w_{m}$, $H^w_{m}$. 
Analogue to the inner regularity case we obtain
{\allowdisplaybreaks
\begin{align*}
 &\int_{\pi^{-1}(B^+_{\sigma}(x_0))}|A_m|^2\,d\tilde{\mu}_{m,i,\ell} \\
 =&  \int_{\pi^{-1}(B^+_{\sigma}(x_0))}|H_m|^2\,d\tilde{\mu}_{m,i,\ell} - 2 \int_{\pi^{-1}(B^+_{\sigma}(x_0))}K_m\,d\tilde{\mu}_{m,i,\ell}\\
 \overset{\eqref{eq:2_4}}{\leq}& C(H_0,\lambda) \bigg(\int_{\pi^{-1}(B^+_{\sigma}(x_0))}|H_m-H_0(*\xi_{f_{m,0}})|^2\,d\tilde{\mu}_{m,i,\ell}\\ 
 &+ \lambda \tilde{\mu}_{m,i,\ell}(\pi^{-1}(B^+_{\sigma}(x_0)))\bigg) +  2\int_{\pi^{-1}(B^+_{\sigma}(x_0))}K_m\,d\tilde{\mu}_{m,i,\ell}\\
 =& C\bigg(\int_{\pi^{-1}(B^+_{\sigma}(x_0))}|H_m-H_0(*\xi_{f_{m,0}})|^2\,d\tilde{\mu}_{m,i,\ell}\\
 &+ \lambda \tilde{\mu}_{m,i,\ell}(\pi^{-1}(B^+_{\sigma}(x_0))) +  \frac{2}{C}\int_{\pi^{-1}(B^+_{\sigma}(x_0))}K_m\,d\tilde{\mu}_{m,i,\ell}\bigg)\\
 \leq& C\bigg(\int_{\operatorname{graph}(w_{m})}|H^w_{m}-H_0(*\xi_{w_{m}})|^2\,d\mathcal{H}^2\\
 &+ \lambda \mathcal{H}^2(\operatorname{graph}(w_{m})) + \frac{2}{C}\int_{\operatorname{graph}(w_{m})}K^w_{m}\,d\mathcal{H}^2\bigg)+\varepsilon_m\\
 \leq& C\bigg(\int_{\operatorname{graph}(w_{m})}|H^w_{m,i}|^2\,d\mathcal{H}^2 + \mathcal{H}^2(\operatorname{graph}(w_{m}))\\
 &+ \frac{2}{C}\int_{\operatorname{graph}(w_{m})}K^w_{m}\,d\mathcal{H}^2\bigg)+\varepsilon_m\\
 \overset{\eqref{eq:A_6}}{\leq}& C\left(\int_{B_\sigma^+(x_0)}|D^2w_{m}|^2\, dx + \sigma^2 \right) + \varepsilon_m\\
 \overset{\eqref{eq:A_7}}{\leq}& C\sigma \int_{f_{m,0}(\tilde{D}_{m,i,\ell})\cap\pi^{-1}(\partial B_\sigma^+(x_0))}|A_{m}|^2\,d\mathcal{H}^1 + C\sigma^2+\varepsilon_m
\end{align*}}
As in the inner regularity case by integrating over $S_m\subset ]\frac{\rho}{2},\frac{3\rho}{4}[$ and using Co-Area formula we obtain
\begin{equation*}
\int_{\pi^{-1}\left(B^+_{\frac{\rho}{2}}(x_0)\right)}|A_m|^2\,d\tilde{\mu}_{m,i,\ell}\leq C \int_{\pi^{-1}\left(B^+_{\frac{3\rho}{4}}(x_0)\setminus B^+_{\frac{\rho}{2}}(x_0)\right)}|A_m|^2\,d\tilde{\mu}_{m,i,\ell} + C\rho^2 + \varepsilon_m.
\end{equation*}
Since the other $\ell$ have been dealt with the interior argument, we obtain
\begin{equation*}
 \int_{\pi^{-1}\left(B^+_{\frac{\rho}{2}}(x_0)\right)}|A_m|^2\,d\mu_{m,i}\leq C \int_{\pi^{-1}\left(B^+_{\frac{3\rho}{4}}(x_0)\setminus B^+_{\frac{\rho}{2}}(x_0)\right)}|A_m|^2\,d\mu_{m,i} + C\rho^2 + \varepsilon_m.
\end{equation*}
The rest of the proof is now the same as \cite[Prop. 2.2, p. 283 bottom half]{Schaetzle}.
\end{proof}

\begin{remark}
 \label{3_2}
 An inspection of the arguments in the proof of Lemma \ref{3_1} yields, that the specific form of the functional $W_{H_0,\lambda}$ does not matter. 
 For Dirichlet boundary value problems as in \eqref{eq:1_3} a functional $F$ for a smooth immersion $f:\Sigma\rightarrow \R^3$ just has to satisfy
 \begin{equation*}
  C_1\int_\Sigma |H_f|^2+1\,d\mu_g \leq F(f)\leq  C_2\int_\Sigma |A_f|^2 +1\, d\mu_g 
 \end{equation*}
for some constants $C_1,C_2>0$. Then the result of Lemma \ref{3_1} would already follow.\\
Here $A_f$ is the second fundamental form, $H_f$ the mean curvature vector and $\mu_g$ the area measure induced by $f$.
\end{remark}

\section{Lower Semicontinuity}
\label{sec:4}
Our main lemma in this section shows lower semicontinuity of the minimising sequence of section \ref{sec:2}. 
Unfortunately we cannot expect this result to be true in general because Gro\ss e-Brauckmann constructed counterexamples in \cite{GrosseBrauck}.
\begin{lemma}
 \label{4_1}
 The minimising sequence $V^0_{f_{m,0}}$, see \eqref{eq:2_5}, satisfies the following lower semi-continuity property
 \begin{equation*}
W_{H_0,\lambda}(V^0)\leq\liminf_{m\rightarrow\infty} W_{H_0,\lambda}(V^0_{f_{m,0}}). 
 \end{equation*}
\end{lemma}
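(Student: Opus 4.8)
The plan is to prove the lower semicontinuity by localizing the Helfrich energy: near the finitely many bad points of Lemma \ref{3_1} the contribution of $W_{H_0,\lambda}(V^0)$ is negligible, and near a good point the limit is, by Lemma \ref{3_1}, a finite union of $C^{1,\beta}\cap W^{2,2}$-graphs obtained as $C^1$- and weak-$W^{2,2}$-limits of graphical pieces of the minimising sequence, on each of which the Helfrich energy admits a variational (supremum-of-linear-functionals) representation that is stable under this convergence because the mean curvature depends linearly on $D^2u$ with uniformly convergent coefficients.

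\textbf{Reduction to good balls.} Fix $\varepsilon>0$ and apply Lemma \ref{3_1} (with, say, $\varepsilon=1$) to fix $\varepsilon_0,\theta,\rho_0,\beta$. By \eqref{eq:2_6} there are only finitely many bad points $b_1,\dots,b_L\in spt(\mu+\mu_0)$, namely those with $\nu_0(\{b_l\})\geq\varepsilon_0^2$. The Helfrich integrand of $V^0$ lies in $L^1(\mu+\mu_0)$ (since $|A_{\mu+\mu_0}|\in L^2(\mu+\mu_0)$ and $\mu+\mu_0$ has finite mass), and $\mu+\mu_0$ is non-atomic, so the finite measure $W_{H_0,\lambda}(V^0)\lfloor(\cdot)$ is non-atomic; hence we may fix $r>0$ with $\sum_{l=1}^{L}W_{H_0,\lambda}(V^0)(\overline{B_r(b_l)})<\varepsilon$. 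Every point of $spt(\mu+\mu_0)\setminus\bigcup_l B_{r/2}(b_l)$, a compact set by Lemma \ref{2_1}, is a good point and, by monotonicity of $\nu_0$ from above, admits a good radius $\rho_{x_0}\leq\rho_0$ satisfying \eqref{eq:3_2} (so automatically $\overline{B_{\rho_{x_0}}(x_0)}$ misses all $b_l$), hence a graphical neighborhood $B_{\theta\rho_{x_0}}(x_0)$ by Lemma \ref{3_1}. Cover that compact set by finitely many such balls $B_1,\dots,B_K$; then $B_1,\dots,B_K$ together with $B_r(b_1),\dots,B_r(b_L)$ cover $spt(\mu+\mu_0)$. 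Choose a smooth partition of unity $\psi_0,\dots,\psi_K$, $0\leq\psi_k\leq1$, with $spt\,\psi_0\subset\bigcup_l B_r(b_l)$, $spt\,\psi_k\subset B_k$ for $k\geq1$ and $\sum_{k=0}^{K}\psi_k\equiv1$ on an open neighborhood $N$ of $spt(\mu+\mu_0)$; by Lemma \ref{2_2}, $spt(\mu_m+\mu_0)\subset N$ for all large $m$. Since the Helfrich integrand is non-negative, for $W^{\psi}(V):=\int(\text{Helfrich integrand of }V)\,\psi$ we get $W_{H_0,\lambda}(V^0)=\sum_{k=0}^{K}W^{\psi_k}(V^0)$, $W_{H_0,\lambda}(V^0_{f_{m,0}})=\sum_{k=0}^{K}W^{\psi_k}(V^0_{f_{m,0}})$ for large $m$, and $W^{\psi_0}(V^0)<\varepsilon$. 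It therefore suffices to show $W^{\psi_k}(V^0)\leq\liminf_{m\to\infty}W^{\psi_k}(V^0_{f_{m,0}})$ for each $k\geq1$; summing these, discarding $W^{\psi_0}(V^0_{f_{m,0}})\geq0$ and letting $\varepsilon\to0$ finishes the proof.

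\textbf{Lower semicontinuity on a good ball.} Fix $k\geq1$ and write $B=B_k$, $\psi=\psi_k$. By Lemma \ref{3_1}, $(\mu+\mu_0)\lfloor B=\sum_{i=1}^{I}\mu_i$ with $\mu_i=\mathcal{H}^2\lfloor\graph(u_i)$ and $u_i\in(W^{2,2}\cap C^{1,\beta})(B\cap L_i)$, and $V^0\lfloor B$ is the sum of these graphs carrying the limiting orientation; using that $H_{V^0}$ agrees $\mathcal{H}^2$-a.e. with the classical mean curvature vector of each $W^{2,2}$-sheet, the Helfrich integrand of $V^0$ over $B$ equals $|\vec H_i-H_0 n_i|^2+\lambda$ on $\graph(u_i)$, where $n_i$ is the sheet normal fixed by the orientation and $\vec H_i$ the corresponding mean curvature vector. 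Revisiting the construction in the proof of Lemma \ref{3_1} (cf. \eqref{eq:3_8} and \eqref{eq:3_9}), over a fixed ball compactly contained in $B$ each approximating piece $\mu_{m,i}$ contains the graph $\graph(u_{m,i})$ of a smooth $u_{m,i}$ with uniform $C^{1,\beta}$-bound (from \eqref{eq:3_3}) and uniform $W^{2,2}$-bound (from $\int_B|A_m|^2\,d\mu_{m,i}<\varepsilon_0^2$ together with $|D^2u_{m,i}|\leq C|A_m|$), plus pimples of small area and curvature energy (\eqref{eq:3_4}, \eqref{eq:3_6}); after a further subsequence $u_{m,i}\to u_i$ in $C^{1,\beta'}$ for $\beta'<\beta$, $D^2u_{m,i}\rightharpoonup D^2u_i$ weakly in $L^2$, and the unit normals $n_{m,i}$ of $\graph(u_{m,i})$ converge to $n_i$ uniformly. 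Discarding the (non-negative) pimple energy,
\[
W^{\psi}(V^0_{f_{m,0}})\geq\sum_{i=1}^{I}\int_{B\cap L_i}\Big(\,|\vec H_{m,i}(x')-H_0 n_{m,i}(x')|^2+\lambda\,\Big)\,\psi(x',u_{m,i}(x'))\,J_{m,i}(x')\,dx',
\]
where $J_{m,i}=\sqrt{1+|\nabla u_{m,i}|^2}$, each cutoff $\psi(\cdot,u_{m,i}(\cdot))$ is supported in a fixed compact subset of $B\cap L_i$, and $\vec H_{m,i}$ depends linearly on $D^2u_{m,i}$ with coefficients depending smoothly on $\nabla u_{m,i}$. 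Writing the $i$-th summand as $\|G_{m,i}\|_{L^2}^2$ with the $\R^4$-valued $G_{m,i}:=(\vec H_{m,i}-H_0 n_{m,i},\sqrt\lambda)\,\sqrt{\psi(\cdot,u_{m,i})\,J_{m,i}}$, we have $\vec H_{m,i}\rightharpoonup\vec H_i$ weakly in $L^2$ (weak $L^2$-limit of $D^2u_{m,i}$ times uniform limit of the coefficients), while $n_{m,i}\to n_i$, $J_{m,i}\to J_i$ and $\psi(\cdot,u_{m,i})\to\psi(\cdot,u_i)$ uniformly, hence $G_{m,i}\rightharpoonup G_i:=(\vec H_i-H_0 n_i,\sqrt\lambda)\,\sqrt{\psi(\cdot,u_i)\,J_i}$ weakly in $L^2$; weak lower semicontinuity of $\|\cdot\|_{L^2}$ yields $\liminf_m\|G_{m,i}\|_{L^2}^2\geq\|G_i\|_{L^2}^2$, and summing over $i=1,\dots,I$ gives the desired local inequality. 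This is the variational formulation \eqref{eq:4_4} promised in the introduction: $\|G_i\|_{L^2}=\sup\{\int G_i\cdot\Phi:\Phi\text{ continuous, compactly supported},\ \|\Phi\|_{L^2}\leq1\}$, and for each fixed $\Phi$ the functional $\int G_{m,i}\cdot\Phi$, being affine in $D^2u_{m,i}$ with uniformly convergent coefficients, converges to $\int G_i\cdot\Phi$, so the supremum is lower semicontinuous.

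\textbf{Main obstacle.} The delicate point is the passage from the graphical decomposition of Lemma \ref{3_1} to the facts used above: that the graph functions $u_{m,i}$ converge to $u_i$ in $C^1$ and weakly in $W^{2,2}$ (keeping track of the $m$-dependent planes $L_{m,i}$ and domains and of the pimples $P_{m,i,j}$, $\tilde P_{m,i,\ell,j}$), and -- the genuinely new issue in the oriented-varifold setting -- that the limiting orientations and multiplicities match up so that the Helfrich integrand of $V^0$ over $B$ is exactly $\sum_i(|\vec H_i-H_0 n_i|^2+\lambda)$, i.e. that $H_{V^0}$ restricts $\mathcal{H}^2$-a.e. to the classical mean curvature of each sheet and that $*\xi$ equals $\pm n_i$ sheetwise with the sign recorded by $\theta_\pm$. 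Once this bookkeeping is secured, the rest is routine weak lower semicontinuity of a quadratic functional under $C^1$- plus weak-$W^{2,2}$-convergence of graphs; note that, unlike the general functional in \eqref{eq:2_1}, here the mean curvature enters the integrand only through the single square $|\vec H-H_0 n|^2$, which is precisely what makes the linear representation -- and hence the lower semicontinuity of the minimising sequence -- available, whereas for arbitrary sequences it fails by \cite{GrosseBrauck}.
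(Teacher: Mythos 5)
Your reduction to good balls (partition of unity, non-atomicity of the Helfrich measure, excising small balls around the finitely many bad points) is a legitimate alternative to the paper's symmetric Vitali covering argument, and the general strategy -- localize near good points, use the variational $L^2$-supremum representation \eqref{eq:4_4}, and exploit that only the single square $|\vec H-H_0n|^2$ appears -- is the same as the paper's. However, the two facts you yourself flag under ``Main obstacle'' are not side issues: they are precisely the content of the proof, and as written your argument leaves both open.

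First, the $C^1$- and weak-$W^{2,2}$-convergence of the approximating graphs $u_{m,i}$ to $u_i$ is not available from what you cite. Estimate \eqref{eq:3_3} is a bound on the \emph{limit} graph $u_i$, produced by Allard's theorem applied to $\mu_i$; the graphical decomposition Lemma \ref{A_6} only gives the approximating graphs the $C^1$ bound $\rho^{-1}|u_{m,i}|+|\nabla u_{m,i}|\le C\varepsilon_0^{1/22}$, which combined with the $W^{2,2}$ bound $\int|D^2u_{m,i}|^2\le C\varepsilon_0^2$ does \emph{not} yield uniform convergence of $\nabla u_{m,i}$ in $2$D ($W^{1,2}\not\hookrightarrow C^0$). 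You would need to first run the power decay \eqref{eq:3_17} and Allard on each $\mu_{m,i}$ for $m$ large to get uniform $C^{1,\beta}$ bounds on the $u_{m,i}$ (and also control the $m$-dependent reference planes $L_{m,i}$), none of which you do. The paper sidesteps all of this: instead of tracking the graph functions, it defines the oriented varifolds $V^0_{m,i}$ in \eqref{eq:4_1}, passes to a subsequential oriented-varifold limit $V^0_i$, and then uses that the first variation is continuous under varifold convergence to obtain \eqref{eq:4_5_1}, i.e.\ $(H_m\pm H_0(*\xi_{m,i}))\mu_{m,i}\to(H\pm H_0(*\xi_i))\mu_i$ as vector-valued Radon measures. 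That makes the supremum characterization \eqref{eq:4_4} stable under the convergence without ever needing $C^1$- or weak-$W^{2,2}$-convergence of graph functions.

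Second, the orientation/multiplicity bookkeeping -- that on each sheet $\graph(u_i)$ the limiting orientation $\xi_i$ is well defined, continuous, and corresponds to $(\theta_+^i,\theta_-^i)=(1,0)$ so that the Helfrich integrand of $V^0$ over the good ball is exactly $\sum_i(|\vec H_i-H_0(*\xi_i)|^2+\lambda)$ -- is the genuinely new step in the oriented-varifold setting, and the paper proves it via the constancy theorem for currents: $\partial[|V^0_{m,i}|]=0$ is preserved in the limit, one pushes $[|V^0_i|]$ forward onto $L_i$ by the bi-Lipschitz projection $\pi_{L_i}$, applies the constancy theorem to conclude $\pi_{L_i\#}[|V^0_i|]=c\,[|L_i\cap B_{\theta\rho_{x_0}}(x_0)|]$, and pulls back to identify $\xi_i$ as the continuous orientation of a single sheet of multiplicity one. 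You state the claim but offer no argument; without it, you cannot assert that $H_{V^0}$ restricts to the classical mean curvature of each sheet, nor that $*\xi$ is sheetwise $\pm n_i$ with sign recorded by $\theta_\pm$, and the identification of the $i$-th summand as $\|G_i\|_{L^2}^2$ does not follow. In short, your proposal correctly identifies the local structure and the right variational formulation, but the two hard points -- continuity of the first variation in place of graph-function convergence, and the constancy theorem to pin down orientation and multiplicity -- are missing.
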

\begin{proof}
Let $x_0\in spt(\mu+\mu_0)$ be a good point (see Lemma \ref{3_1}), i.e. there exists a $\rho_0>0$, such that for every $0<\rho_{x_0}\leq \rho_0$ we have
\begin{equation*}
 \nu_0(\overline{B_{\rho_{x_0}}(x_0)})<\varepsilon_0^2.
\end{equation*}
First we choose an appropiate subsequence, such that $\liminf_{m\rightarrow\infty} W_{H_0,\lambda}(V^0_{f_{m,0}})=\lim_{m\rightarrow\infty} W_{H_0,\lambda}(V^0_{f_{m,0}})$ and relabel if necessary.
As in the proof of Lemma \ref{3_1} inequality \eqref{eq:3_5} allows us to apply the graphical decomposition Lemma \ref{A_6} to $f_{m,0}({B_{\frac{x_0}{2}}(x_0)})$. 
Hence we find closed pairwise disjoint sets $D_{m,i}\subset \Sigma_\oplus$, $i=1,\ldots I_m$, $I_m\leq CE$ such that
\begin{equation*}
 f_{m,0}^{-1}(B_{\frac{\rho_{x_0}}{2}}(x_0))=\sum_{i=1}^{I_m} D_{m,i}.
\end{equation*}
Now \eqref{eq:3_7} yields a $0<\theta< \frac{1}{4}$ such that
\begin{equation}
\label{eq:4_0_0}
 f_{m,0}:{D_{m,i}\cap f_{m,0}^{-1}(B_{\theta\rho_{x_0}}(x_0))}\rightarrow \R^3\mbox{ is an embedding.}
\end{equation}

Since $I_m\leq CE$ we can choose another subsequence dependend on $\rho_{x_0}$, $\theta$ and $x_0$, such that $I_m=I$ is independent of $m$. Of course we relabel again if necessary.
Let us now define analogously to \eqref{eq:3_8} oriented varifolds corresponding to the $D_{m,i}$ in $B_{\theta\rho_{x_0}}(x_0))$. 
Hence we also need an orientation. Let $\tau:\Sigma_\oplus\rightarrow G^0(2,3)$ be the given orientation of $\Sigma_\oplus$. Now let 
\begin{equation}
 \label{eq:4_0}
 \xi_{m,i}:B_{\theta\rho_{x_0}}(x_0)\cap f(D_{m,i})\rightarrow\R^3
\end{equation}
be defined by
\begin{equation}
 \label{eq:4_0_1}
 \xi_{m,i}(x):= df_{m,0}\left(\tau\left(\left(f_{m,0}|_{D_{m,i}\cap f^{-1}_{m,0}(B_{\theta\rho_{x_0}}(x_0))}\right)^{-1}(x)\right)\right).
\end{equation}
By \eqref{eq:4_0_0} this orientation is well defined.
Now
\begin{equation}
\label{eq:4_1}
 V^0_{m,i}:=V^0(f_{m,0}(D_{m,i})\cap B_{\theta\rho_{x_0}}(x_0),1,0,\xi_{m,0})
\end{equation}
defines an oriented varifold on $\Omega:=B_{\theta\rho_{x_0}}(x_0)$.
Similar to \eqref{eq:3_7_1} we obtain by \eqref{eq:4_0_0} and also using the definition of the densities of $V_{f_{m,0}}^0$ in \eqref{eq:2_1_1}
\begin{equation}
\label{eq:4_1_1}
 V^0_{f_{m,0}}\lfloor \left(B_{\theta\rho_{x_0}}(x_0)\times G^0(2,3)\right)=\sum_{i=1}^I V^0_{m,i}.
\end{equation}
Before we proceed let us notate the corresponding masses by
\begin{equation*}
 \mu_{m,i}:=\mu_{V^0_{m,i}},
\end{equation*}
which coincide with the definition given in \eqref{eq:3_8}
\begin{align}
 \begin{split}
 \label{eq:4_1_2}
 \mu_{m,i}&=\mathcal{H}^2\lfloor f_{m,0}(D_{m,i}\cap f_{m,0}^{-1}(B_{\theta\rho_{x_0}}(x_0))),
\end{split}
 \end{align}
 because one of the densitity of $V^0_{m,i}$ is one and the other is zero.
 Since $f_{m,0}:\Sigma_\oplus\rightarrow \R^3$ is a closed immersion, the boundary current satisfies $\partial[|V^0_{m,i}|]=0$ on $\Omega$.
 The first variation and the mass are bounded as well and hence we can apply Hutchinson's compactness result \ref{B_2} to the $V^0_{m,i}$.
 By extracting a suitable subsequence dependend again on $\theta$, $x_0$ and $\rho_{x_0}$, we obtain after relabeling
\begin{align}
\begin{split}
\label{eq:4_2}
 V^0_{m,i}\rightarrow V^0_i\mbox{ as oriented varifolds on } G^0(\Omega),&\\
 \mu_{m,i}\rightarrow \mu_i\mbox{ weakly as varifolds on } \Omega.&
 \end{split}
\end{align}
Now let $\Phi:C_0^0(\Omega\times G^0(2,3))\rightarrow\R$ be arbitrary. As in \eqref{eq:3_10_1} we obtain
\begin{equation*}
 \int\Phi\, \, dV^0\leftarrow \int\Phi\, dV^0_{m}=\sum_{i=1}^I \int\Phi\, dV^0_{m,i} \rightarrow \sum_{i=1}^I\int \Phi\, dV^0_i
\end{equation*}
by approximating $\Phi$ monotonically by simple functions, using Beppo-Levi's theorem and exploiting the finiteness of the $V^0_{m,i}$.
Riesz representation theorem again yields 
\begin{equation*}
 V^0\lfloor (B_{\theta\rho_{x_0}}(x_0)\times G^0(2,3))=\sum_{i=1}^I V^0_{i}.
\end{equation*}
The proof of Lemma \ref{3_1} yield the $\mu_i$ to be $C^{1,\alpha}\cap W^{2,2}$ graphs. 
More precisely there exist affine $2$-planes $L_i$ and $u_i\in C^{1,\alpha}\cap W^{2,2}(L_i\cap B_{\theta\rho_{x_0}}(x_0),L_i^\perp)$, such that
\begin{equation*}
 \mu_i=\mathcal{H}^2\lfloor\{y\in\R^3|\ \exists x\in L_i\cap B_{\theta\rho_{x_0}}(x_0)\mbox{ with } y=u_i(x)+x\}.
\end{equation*}
For simplicities sake we call
\begin{equation}
\label{eq:4_2_0}
 \graph{u_i}:=\{y\in\R^3|\ \exists x\in L_i\cap B_{\theta\rho_{x_0}}(x_0)\mbox{ with } y=u_i(x)+x\}.
\end{equation}
Hence we find densities $\theta_+^i,\theta_-^i:\graph{u_i}\rightarrow\N_0$ with $\theta_+^i+\theta_-^i=1$, such that
\begin{equation*}
 V^0_i=V^0(\graph(u_i),\theta_+^i,\theta_-^i,\xi_i).
\end{equation*}
Here $\xi_i:\graph(u_i)\rightarrow G^0(2,3)$ denotes the $\mathcal{H}^2$-measurable orientation of $V^0_i$.
We need to show, that $\xi_i$ is continuous:
Let $\pi_{L_i}:\R^3\rightarrow L_i$ be given by the following operation: Every $y\in \R^3$ can be decomposed uniquely into $y^\parallel\in L_i$ and $y^\perp\in L_i^\perp$ by $y=y^\parallel+y^\perp$.
Then we set $\pi_{L_i}(y)=y^\parallel$. 
We call this function the orthogonal projection onto $L_i$.
Then $\pi_{L_i}|_{\graph(u_i)}\rightarrow L_i\cap B_{\theta\rho_{x_0}}(x_0)$ is given by $(u_i(x)+x))\mapsto x$. 
$\pi_{L_i}|_{\graph(u_i)}\rightarrow L_i\cap B_{\theta\rho_{x_0}}(x_0)$ is bijectiv and $\pi_{L_i}$ is Lipschitz with Lipschitz constant smaller or equal than $1$.
Let us examine the projection of this current onto $L_i$ by using the push forward, i.e. $\pi_{L_i\#}[|V^0_{i}|]$ (see \cite[26.20]{Simon_Buch}). 
This push forward is well defined, since $\pi_{L_i}|_{\overline{\graph{u_i}}}$ is proper.
Further $\partial \pi_{L_i\#}[|V^0_{i}|]=0$, since $\partial[|V^0_i|]=0$. 
This comes from the fact, that $\partial[|V^0_{m,i}|]=0$ and that oriented varifold convergence is stronger than current convergence.
Now 
\begin{equation*}
 spt(\pi_{L_i\#}[|V^0_{i}|])\subset L_i
\end{equation*}
and we can employ the constancy theorem for currents (see e.g. \cite[26.27]{Simon_Buch} or \cite[p. 357]{Federer}) and get
\begin{equation*}
 \pi_{L_i\#}[|V^0_{i}|] = c\cdot [|L_i\cap B_{\theta\rho_{x_0}}(x_0)|] 
\end{equation*}
for some constant $c\in\R$. Furthermore $[|L_i\cap B_{\theta\rho_{x_0}}(x_0)|] $ is equipped with a constant orientation $\tau_{L_i}$.
Since $\pi_{L_i}|_{\graph(u_i)}$ is bijective we can project back onto $\graph(u_i)$ and obtain
\begin{equation*}
 [|V^0_i|]=((\pi_{L_i}|_{\graph(u_i)})^{-1})_\#(c\cdot [|L_i\cap B_{\theta\rho_{x_0}}(x_0)|]).
\end{equation*}
Since $u_i$ is continuously differentiable, $(\pi_{L_i}|_{\graph(u_i)})^{-1}:L_i\rightarrow\R^3$ is continuously differentiable as well.
By \cite[p. 138]{Simon_Buch} we obtain
\begin{equation*}
\xi_i(x)= d((\pi_{L_i}|_{\graph(u_i)})^{-1})_{\pi_{L_i}(x)\#}\tau_{L_i},
\end{equation*}
which yields $\xi_i$ to be continuous.
This yields without loss of generality 
\begin{equation}
 \label{eq:4_2_1}
\theta_+^i=1 \mbox{ and } \theta_-^i=0.
\end{equation}
Hence for the mean curvature vector $H$ of $V^0$ we have
\begin{equation}
 \label{eq:4_3}
\int (H(x)-(*\xi)H_0)^2\, dV^0_i(x,\xi) = \int_{\graph(u_i)} (H(x)-(*\xi_i(x))^2\, d\mathcal{H}^2(x)
\end{equation}
Examining the term on the right hand side yields with standard $L^2$ density arguments 
\begin{align}
\begin{split}
\label{eq:4_4}
 \sqrt{\int_{\graph(u_i)} (H-H_0(*\xi_i))^2d\mathcal{H}^2}
 =&\sup\bigg\{\int_{\graph(u_i)}(H-H_0(*\xi_i))\varphi\, d\mathcal{H}^2,\\& \varphi\in C^0_0(\R^3,\R^3), \|\varphi\|_{L^2(\mathcal{H}^2\lfloor\graph(u_i))}\leq 1\bigg\}.
 \end{split}
\end{align}
The next step consists of showing
\begin{equation*}
(*\xi_{m,i})\mu_{m,i}\rightarrow (*\xi_i)\mu_i\ \mbox{ as vector valued Radon measures,}
\end{equation*}
with $\mu_i=\mu_{V^0_i}$ the mass of $V^0_i$.
Let $\Phi:G^0(\R^3)\rightarrow \R$ be defined as
\begin{equation*}
 \Phi(x,\xi)=\varphi(x)\cdot(*\xi)
\end{equation*}
with $\varphi\in C^0_0(\R^3,\R^3)$. Since $*$ is continuous we have $\Phi\in C_0^0(G^0(\R^3),\R)$, which yields with \eqref{eq:4_2}
\begin{equation*}
 \int \varphi(x)\cdot(*\xi)\, dV^0_{m,i}(x,\xi)\rightarrow\int \varphi(x)\cdot(*\xi)\,dV^0_i(x,\xi).
\end{equation*}
The densities given in \eqref{eq:4_1} and \eqref{eq:4_2_1} allow us to reformulate this with the help of the masses
\begin{equation}
 \label{eq:4_5}
 \int \varphi(x)(*\xi_{m,i}(x))d\mu_{m,i}(x)\rightarrow\int \varphi(x)(*\xi_{i}(x))d\mu_i(x).
\end{equation}
Let us call $H_{m,i}$ and $H_{i}$ the mean curvature vectors of $V^0_{m,i}$ respectively $V^0_i$.
Since $\mu_{m,i}$ are smooth, we have $H_{m,i}(x)=H_m(x)$ for $\mathcal{H}^2$-a.e. $x\in spt(\mu_{m,i})\cap spt(\mu_m)$.
Furthermore we have also have $H_i(x)=H(x)$ for $\mathcal{H}^2$-a.e. $x\in spt(\mu_{i})\cap spt(\mu)$, but the reasoning is a bit more involved.
First the functions $u_i$ are twice approximately differentiable by \cite[§6.1, Thm. 4]{EvansGariepy} $\mathcal{H}^2$-a.e., because they are in $W^{2,2}$. 
Then \cite[Thm 4.1]{SchaetzleCurrentLower} is applicable and we obtain an explicit formula for the mean curvature by the parameterization of $\graph(u_i)$ given in \eqref{eq:4_2_0}.
This yields the desired result.
Since the first variation is continuous with respect to varifold convergence, we obtain 
\begin{align}
\begin{split}
\label{eq:4_5_1}
 &(H_{m}\pm H_0(*\xi_{m,i}))\mu_{m,i}=(H_{m,i}\pm H_0(*\xi_{m,i}))\mu_{m,i}\\ \rightarrow\quad &(H_i\pm H_0(*\xi_i))\mu_i=(H\pm H_0(*\xi_i))\mu_i
\end{split}
 \end{align}
 as vector valued Radon measures.
Let $\varepsilon>0$ be arbitrary. 
Then \eqref{eq:4_4} yields a continuous $\varphi\in C_0^0(\R^3,\R^3)$, such that $\|\varphi\|_{L^2(\mathcal{H}^2\lfloor\graph(u_i))}=\|\varphi\|_{L^2(\mu_i)} \leq 1$ and
\begin{align*}
 &\sqrt{\int_{\graph(u_i)} (H-H_0(*\xi_i))^2d\mathcal{H}^2}\leq \int_{\graph(u_i)}(H-H_0(*\xi_i))\varphi\, d\mathcal{H}^2 + \varepsilon \\
 \overset{\eqref{eq:4_5_1}}{=}&\liminf_{m\rightarrow\infty}\int(H_m-H_0(*\xi_{m,i}))\varphi\, d\mu_{m,i} + \varepsilon\\
 \leq& \liminf_{m\rightarrow\infty}\sqrt{\int(H_m-H_0(*\xi_{m,i}))^2d\mu_{m,i}}\lim_{m\rightarrow\infty}\sqrt{\int\varphi^2 d\mu_{m,i}} + \varepsilon\\
 =&\liminf_{m\rightarrow\infty} \sqrt{\int(H_m-H_0(*\xi_{m,i}))^2 d\mu_{m,i}}\sqrt{\int\varphi^2 d\mu_{i}}+\varepsilon\\
 \leq&\liminf_{m\rightarrow\infty} \sqrt{\int(H_m-H_0(*\xi_{m,i}))^2 d\mu_{m,i}} + \varepsilon.
\end{align*}
By the Lemmas \ref{2_1} and \ref{2_2} the mass is continuous, i.e. for all $i=1,\ldots I$
\begin{equation*}
 \mu_{m,i}(\R^3)\rightarrow \mu_i(\R^3)\mbox{ and } \mu_{m}(\R^3)\rightarrow \mu(\R^3).
\end{equation*}
Together with \eqref{eq:4_5_1} this yields the lower semi-continuity of the Helfrich energy for one graph:
\begin{equation}
 \label{eq:4_6}
 W_{H_0,\lambda}(V^0_i)\leq \liminf_{m\rightarrow \infty}W_{H_0,\lambda}(V^0_{m,i}).
\end{equation}
Since the mean curvature vector is locally unique, i.e. $H_i=H$ $\mathcal{H}^2$-a.e. on $\graph(u_i)$ we can add these varifolds up and obtain
\begin{equation*}
 \sum_{i=1}^I W_{H_0,\lambda}(V_i^0)=W_{H_0,\lambda}(V^0)\big|_{B_{\theta\rho_{x_0}}(x_0)}.
\end{equation*}
Here the notation $\big|_{B_{\theta\rho_{x_0}}(x_0)}$ means integration over this set, i.e. 
\begin{equation*}
 W_{H_0,\lambda}(V^0)\big|_{B_{\theta\rho_{x_0}}(x_0)}:= \int_{G^0(B_{\theta\rho_{x_0}}(x_0))}(H-H_0(*\xi))^2 dV^0 + \lambda (\mu+\mu_0)(B_{\theta\rho_{x_0}}(x_0)).
\end{equation*}
The argument above is also valid for every $\rho>0$ with $0<\rho\leq\theta\rho_{x_0}$.
By adding the varifolds up as mentioned above we obtain the following:\\
For every good point $x_0$ (see Lemma \ref{3_1}) there exists a good radius $\rho_{x_0}$ such that for every $0<\rho\leq\theta\rho_{x_0}$, we have (see also \eqref{eq:4_5_1} for the additivity of the Helfrich energy)
\begin{align}
\begin{split}
\label{eq:4_7}
  &W_{H_0,\lambda}(V^0)\big|_{B_{\rho}(x_0)}= \sum_{i=1}^I W_{H_0,\lambda}(V^0_i)|_{B_{\rho}(x_0)}\\
  \leq& \sum_{i=1}^I\liminf_{m\rightarrow\infty}  W_{H_0,\lambda}(V^0_{m,i})|_{B_{\rho}(x_0)} \leq \liminf_{m\rightarrow\infty}\sum_{i=1}^I W_{H_0,\lambda}(V^0_{m,i})|_{B_{\rho}(x_0)}\\
  =& \liminf_{m\rightarrow \infty}W_{H_0,\lambda}(V^0_{m})\big|_{B_{\rho}(x_0)}.
\end{split}
\end{align}
Let us call the finitely many 'bad' points given by Lemma \ref{3_1} by ${x}_1,\ldots,{x}_K\in spt(\mu+\mu_0)$.
Now we can apply the symmetric Vitali property (see e.g. \cite[Remark 4.5 (2)]{Simon_Buch}) and cover $\R^3\setminus\{x_1,\ldots,{x}_K\}$ 
with pairwise disjoint balls $(B_{\rho_i}(y_i))_{i\in\N}$ $\omega$-a.e., which satisfy \eqref{eq:4_7}. The Radon measure $\omega$ is given by 
\begin{equation}
 \label{eq:4_8}
 \omega(A):=W_{H_0,\lambda}(V^0)\big|_{A}
\end{equation}
Since $\omega(\{{x}_i\})=0$ we can calculate
\begin{align*}
W_{H_0,\lambda}(V^0)=&\omega(\R^3) = \omega(\R^3\setminus\{{x}_1,\ldots,{x}_K\}) = \omega\left(\bigcup_{i\in\N}B_{\rho_i}(y_i)\right)\\
=& \sum_{i\in\N}\omega(B_{\rho_i}(y_i))\\
\leq& \sum_{i\in\N}\liminf_{m\rightarrow\infty}W_{H_0,\lambda}(V^0_{m})\big|_{G^0(B_{\rho_i}(y_i))}\\
\leq& \liminf_{m\rightarrow\infty} \sum_{i\in\N}W_{H_0,\lambda}(V^0_{m})\big|_{G^0(B_{\rho_i}(y_i))}\\
\leq& \liminf_{m\rightarrow\infty} W_{H_0,\lambda}(V^0_{m}).
\end{align*}
Interchanging the sum and $\liminf$ is Fatou's Lemma applied to the counting measure on $\N$.
\end{proof}

\section{Building the branched Helfrich immersion}
\label{sec:5}
In this section we will show that $V^0$ is Helfrich outside of the finitely many bad points and build the corresponding immersion.
Since the needed arguments are essentially given in the papers \cite{Simon}, \cite{Schaetzle} and \cite{NdiayeSchaetzle}, we only sketch the proofs and cite the necessary ideas.

\begin{lemma}[see \cite{Schaetzle} p. 290 or \cite{NdiayeSchaetzle}]
 \label{5_1}
 Let $x_1,\ldots x_K$ be the finitely many bad points (see Lemma \ref{3_1}) of $V^0$. 
 Then for every $\rho>0$ small enough there exists $m\in\N$ big enough such that we find a $C^{1,\alpha}\cap W^{2,2}$-immersion
 \begin{equation*}
  f: \Sigma_{m,\rho}\rightarrow \operatorname{spt}(\mu+\mu_0)\setminus \bigcup_{k=1}^K B_\rho(x_k),
 \end{equation*}
which is surjective. Here
 \begin{equation*}
  \Sigma_{m,\rho}:=f^{-1}_{m,0}\left(\R^3\setminus \bigcup_{k=1}^K B_\rho(x_k)\right).
 \end{equation*}
\end{lemma}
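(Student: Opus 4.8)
The plan is to realise the limit support, away from the bad points, as the image of a controlled perturbation of $f_{m,0}$ for $m$ large, using the graphical description furnished by Lemma \ref{3_1}. First I would fix $\rho>0$ so small that the balls $B_\rho(x_k)$, $k=1,\dots,K$, are pairwise disjoint, and (by Sard) so that $f_{m,0}^{-1}(\partial B_\rho(x_k))$ is a smooth $1$-manifold, making $\Sigma_{m,\rho}$ a compact $C^\infty$ manifold with boundary. Since $\operatorname{spt}(\mu+\mu_0)$ is compact (Lemma \ref{2_1}) and $K_\rho:=\operatorname{spt}(\mu+\mu_0)\setminus\bigcup_k B_\rho(x_k)$ contains only good points, I would cover $K_\rho$ by finitely many good balls $B_j:=B_{\theta\rho_{x_j}}(x_j)$, $j=1,\dots,N$, each satisfying the conclusion of Lemma \ref{3_1} for one fixed small $\varepsilon$, using slightly smaller concentric balls $B_j'\Subset B_j$ for the actual covering. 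By Lemma \ref{2_2} the supports $\operatorname{spt}(\mu_m+\mu_0)=f_{m,0}(\Sigma_\oplus)$ converge to $\operatorname{spt}(\mu+\mu_0)$ in Hausdorff distance, so for $m$ large $f_{m,0}(\Sigma_{m,\rho})$ is covered by the $B_j$.

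Next, for $m$ large I would apply Lemma \ref{3_1} on each $B_j$: $f_{m,0}^{-1}(B_j)$ decomposes into pieces $D^j_{m,i}$, $i=1,\dots,I_j$ (pass to a subsequence so $I_j$ is $m$-independent), with $f_{m,0}|_{D^j_{m,i}}$ an embedding whose image is, up to the negligible ``pimples'' of \eqref{eq:3_6}, a near-flat $C^{1,\beta}\cap W^{2,2}$ graph $\graph(u^j_{m,i})$ over an affine plane $L^j_i$, and $u^j_{m,i}\to u^j_i$ in $C^{1,\beta'}$ ($\beta'<\beta$) and weakly in $W^{2,2}$, where the $u^j_i$ are the limit graphs of Lemma \ref{3_1} and $\operatorname{spt}(\mu^j_{m,i})\to\graph(u^j_i)$ in Hausdorff distance. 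I would then \emph{define} $f$ on each $D^j_{m,i}$ by sliding $f_{m,0}$ onto the limit sheet along $(L^j_i)^\perp$:
\begin{equation*}
 f(p):=G^j_i\bigl(\pi_{L^j_i}\bigl(f_{m,0}(p)\bigr)\bigr),\qquad G^j_i(x):=x+u^j_i(x),
\end{equation*}
with $\pi_{L^j_i}$ the orthogonal projection onto $L^j_i$. For $m$ large, $\pi_{L^j_i}\circ f_{m,0}|_{D^j_{m,i}}$ is a $C^\infty$ diffeomorphism onto its image in $L^j_i$, so $f|_{D^j_{m,i}}\in (C^{1,\beta'}\cap W^{2,2})(D^j_{m,i},\R^3)$ (a $C^{1,\beta'}\cap W^{2,2}$ map precomposed with a smooth diffeomorphism preserves this class), and $df=dG^j_i\circ d(\pi_{L^j_i}\circ f_{m,0})$ is injective because $dG^j_i=(\mathrm{Id},Du^j_i)$ is; by construction $f(D^j_{m,i})\subset\graph(u^j_i)\subset\operatorname{spt}(\mu+\mu_0)$, and since $|f-f_{m,0}|\to 0$ uniformly it stays in $K_\rho$ for $m$ large (shrinking the balls in $\Sigma_{m,\rho}$ by an infinitesimal amount if one wants the containment literally).

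Surjectivity I would obtain from the Hausdorff convergence together with the structure of the graphical decomposition: if $y\in K_\rho$ lies on the sheet $\graph(u^j_i)$ over $B_j'$, write $y=x_y+u^j_i(x_y)$; the main graph of $f_{m,0}|_{D^j_{m,i}}$ is defined over a simply connected domain which exhausts the full disc $L^j_i\cap B_j$ as $m\to\infty$ while the pimples have diameter tending to $0$ (by \eqref{eq:3_6}), so $x_y\in\pi_{L^j_i}(f_{m,0}(D^j_{m,i}))$ for $m$ large, and $f$ sends the corresponding point of $D^j_{m,i}$ to $y$.

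The step requiring the most care — and which I would carry out exactly as in \cite[p.\ 290]{Schaetzle} and \cite{NdiayeSchaetzle} — is that the locally defined maps $f|_{D^j_{m,i}}$ glue to a single well-defined map on $\Sigma_{m,\rho}$: on an overlap $D^j_{m,i}\cap D^{j'}_{m,i'}$ the two prescriptions parametrise the same portion of $\operatorname{spt}(\mu+\mu_0)$, related by the $C^{1,\beta'}\cap W^{2,2}$ change of graph $\pi_{L^{j'}_{i'}}\circ G^j_i\circ(\pi_{L^j_i})^{-1}$, and one must verify this is consistent with the combinatorial pattern in which the pieces $D^j_{m,i}$ are arranged inside the fixed manifold $\Sigma_{m,\rho}$ — equivalently, that the local sheets of $\mu+\mu_0$ near each point of $K_\rho$ are canonically determined, after possibly shrinking $\varepsilon$. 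The treatment of the pimples and of the behaviour near $\partial B_\rho(x_k)$ enters at the same place; all of this is routine given the estimates already collected in Lemmas \ref{2_1}, \ref{2_2} and \ref{3_1}, and is precisely the content of the cited arguments, which is why only a sketch is given here.
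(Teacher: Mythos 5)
Your overall plan — cover the good part of $\operatorname{spt}(\mu+\mu_0)$ by the balls of Lemma \ref{3_1}, use Hausdorff convergence (Lemma \ref{2_2}) to place $f_{m,0}(\Sigma_{m,\rho})$ inside these balls for $m$ large, and build $f$ by sending each embedded piece $D^j_{m,i}$ onto the corresponding limit graph — is exactly the strategy in Sch\"atzle and Ndiaye--Sch\"atzle, and the surjectivity argument via Hausdorff convergence and vanishing pimple diameter is fine. However, the specific formula $f(p):=G^j_i\bigl(\pi_{L^j_i}(f_{m,0}(p))\bigr)$ has two genuine problems, and the step you call ``routine'' is precisely where the cited papers have to work.

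First, the formula does not glue. On an overlap $D^j_{m,i}\cap D^{j'}_{m,i'}$ set $y=f_{m,0}(p)$; then $f_1(p)$ is the unique point of $\graph(u^j_i)$ with $y-f_1(p)\in(L^j_i)^\perp$, while $f_2(p)$ is the unique point of $\graph(u^{j'}_{i'})$ with $y-f_2(p)\in(L^{j'}_{i'})^\perp$. Even when the two graphs coincide as subsets near $y$, these are two different ``shadows'' of $y$ cast along two different normal directions, and they agree only if $y$ already lies on the limit surface. The change-of-graph map $\pi_{L^{j'}_{i'}}\circ G^j_i\circ(\pi_{L^j_i})^{-1}$ you invoke is a transition function between two \emph{parametrisations of the limit surface}; it does not say that the two prescriptions of $f$ at the same source point $p$ coincide. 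The discrepancy $|f_1(p)-f_2(p)|$ is of order $\operatorname{dist}(f_{m,0}(p),\operatorname{spt}\mu)\cdot\|\nabla u\|_{L^\infty}$, which tends to $0$ but is not zero, so ``gluing'' requires either a chart-independent construction (a tubular-neighbourhood retraction, which is delicate at $C^{1,\alpha}$ regularity), or a partition-of-unity interpolation followed by a projection back onto $\operatorname{spt}(\mu+\mu_0)$, or the abstract-atlas construction used in \cite{Schaetzle} and \cite{NdiayeSchaetzle} (build the target manifold from the graph transitions and identify it topologically with $\Sigma_{m,\rho}$). Second, on the pimples $P_{m,i,j}$ the map $\pi_{L^j_i}\circ f_{m,0}$ is not locally injective, so your $f$ is not an immersion there; one has to exploit \eqref{eq:3_6} to excise and re-fill these regions rather than push them down by the same formula. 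Both issues are the real content of the citation and deserve to be flagged as such rather than as routine verifications.
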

\begin{proof}
 See \cite[p. 290]{Schaetzle} or for a more detailed argument see \cite{NdiayeSchaetzle}.
\end{proof}

\begin{lemma}
 \label{5_2}
 $V^0$ is Helfrich outside the finitely many bad points (see Lemma \ref{3_1}).
\end{lemma}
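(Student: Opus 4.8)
The plan is to show that $V^0$ satisfies the Euler--Lagrange equation \eqref{eq:1_2} in the weak sense away from the bad points $x_1,\dots,x_K$ by performing inner variations and normal variations of the minimising sequence, then passing to the limit. First I would fix a good point $x_0$ and work on a ball $B_{\theta\rho_{x_0}}(x_0)$ in which, by Lemma \ref{3_1}, $\mu+\mu_0$ decomposes into finitely many $C^{1,\alpha}\cap W^{2,2}$-graphs $\graph(u_i)$. On each such graph the varifold is single-sheeted with a continuous orientation (as established in the proof of Lemma \ref{4_1}), so it suffices to derive the Helfrich equation for each $u_i$ separately and then sum. The key point enabling this is that $f_{m,0}$ is a minimising sequence not just for $W_{H_0,\lambda}$ but, by the Gauss--Bonnet argument already used in the proof of Lemma \ref{3_1}, also for $W_{H_0,\lambda}+\kappa\int K\,d\mu_g$; I would exploit this freedom to absorb the Gauss curvature term, just as \cite{Simon} and \cite{Schaetzle} do for the Willmore case.

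The main steps, in order: (1) Take a smooth compactly supported vector field $X$ in $B_{\theta\rho_{x_0}}(x_0)$ and consider the variation $f_{m,0,t}=f_{m,0}+tX\circ f_{m,0}$; since $f_{m,0}$ nearly minimises, $\frac{d}{dt}\big|_{t=0}W_{H_0,\lambda}(f_{m,0,t})\to 0$ as $m\to\infty$ (after accounting for the boundary conditions \eqref{eq:1_3}, which are preserved away from $\Gamma$ for $X$ supported in the interior, and handled via \cite{Schaetzle} at boundary good points). (2) Compute the first variation explicitly using the known formula for the first variation of $\int(\bar H-H_0)^2+\lambda\,d\mu_g$; this produces, tested against $X$, a distributional expression involving $\bar H$, $\Delta\bar H$, $K$ and lower-order terms. (3) Pass to the limit $m\to\infty$: here I would use that on each graph $H_{m,i}\to H_i=H$ and $(*\xi_{m,i})\mu_{m,i}\to(*\xi_i)\mu_i$ as established in Lemma \ref{4_1}, together with the $W^{2,2}$-convergence of the $u_{m,i}$ to $u_i$ coming from the power decay \eqref{eq:3_4} and Allard's theorem, to identify the limit of the first variation as the first variation at $V^0$. (4) Conclude that $V^0$ is a weak solution of \eqref{eq:1_2} on each graph, hence on $B_{\theta\rho_{x_0}}(x_0)$, and since good points cover $\operatorname{spt}(\mu+\mu_0)\setminus\{x_1,\dots,x_K\}$, on all of that set.

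The hard part, I expect, will be step (3): controlling the convergence of the fourth-order term $\Delta_f\bar H$ in the Euler--Lagrange equation under only $W^{2,2}$-regularity of the limit graphs. Unlike the energy, which is quadratic and lower semicontinuous, the EL equation involves $\Delta\bar H$, and one cannot naively pass to the limit in a term that is two derivatives beyond the natural regularity class. The standard remedy, following \cite{Simon} and \cite{KuwertSchaetzleRemovability}-type arguments, is to test the first variation only against variations $X$ that are normal and to integrate by parts so that no more than two derivatives ever fall on $u_i$; the resulting weak formulation is then stable under $W^{2,2}$-weak convergence. A secondary technical issue is the bookkeeping of the error terms $\varepsilon_m\to 0$ arising because $f_{m,0}$ only nearly minimises and because the glued-in piece $f_0$ near $\Gamma$ must be smoothed; these are controlled exactly as in \cite{Schaetzle}. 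Since all of these arguments appear essentially verbatim in \cite{Simon}, \cite{Schaetzle} and \cite{NdiayeSchaetzle}, I would, consistently with the stated intent of Section \ref{sec:5}, only sketch them and refer to those papers for the details, noting the one modification needed here: the extra terms $-2H_0K-H_0^2\bar H$ in \eqref{eq:1_2}, which are lower-order relative to $\Delta\bar H$ and pass to the limit by the same graph-convergence used for the energy in Lemma \ref{4_1}.
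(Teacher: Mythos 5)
Your proposed approach---differentiating the energy along the minimising sequence and passing to the limit in the first variation formula---is genuinely different from the paper's, and it has two serious gaps.

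First, and most fundamentally, the claim in your step (1) that $\frac{d}{dt}\big|_{t=0}W_{H_0,\lambda}(f_{m,0,t})\to 0$ as $m\to\infty$ does not follow from $f_{m,0}$ being a minimising sequence. Minimising sequences for smooth functionals need not have vanishing first variation; one would need something like Ekeland's variational principle to extract a Palais--Smale-type subsequence, and setting this up in the present non-complete, geometric-constraint setting is not a small modification but a different (and harder) programme. The paper sidesteps this entirely: it never differentiates along the minimising sequence at all.

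Second, even granting step (1), your step (3)---passing to the limit in the distributional first variation---faces exactly the obstacle you flag: the first variation of $\int(\bar H-H_0)^2d\mu$ tested against $X$ contains products of curvature quantities that live only in $L^1$ under the available $W^{2,2}$ control, and $L^1$-weak convergence of such products is precisely what fails in general (this is closely related to the lack of lower semicontinuity for arbitrary sequences, cf. Gro\ss e-Brauckmann's example). Your remedy of testing only against normal variations and integrating by parts keeps the test side smooth, but it does not remove the nonlinear curvature products on the sequence side.

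The paper's proof of Lemma \ref{5_2} instead runs a \emph{competitor-energy comparison}: following Simon, one builds comparison immersions $\tilde f_{m,0}$ that agree with $f_{m,0}$ near the bad points and with a compactly perturbed limit graph $u^p$ near a good point, so that by minimality of $f_{m,0}$ one has $W_{H_0,\lambda}(f_{m,0})\leq W_{H_0,\lambda}(f^p_{m,0})+\varepsilon_m$. Lower semicontinuity along the minimising sequence (Lemma \ref{4_1}) then gives $W_{H_0,\lambda}(u)\leq W_{H_0,\lambda}(u^p)+C\varepsilon$, and since $\varepsilon>0$ was arbitrary, $u$ is a local minimiser in $C^{1,\alpha}\cap W^{2,2}$ among compact perturbations, hence a weak Helfrich solution. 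No derivative is ever taken along the sequence, only energies are compared, and the only limit passage is the one Lemma \ref{4_1} was designed for. In short, your approach replaces the central use of Lemma \ref{4_1} with a first-variation convergence argument that is neither justified here nor, given the counterexamples, likely to hold; you should reorganise around the comparison argument.
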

\begin{proof}
As in Lemma \ref{5_1} let us denote the finitely many bad points with $x_1,\ldots x_K$. 
The arguments in \cite[pp. 311-317]{Simon} only need the Hausdorff distance convergence of $\operatorname{spt}(\mu_m+\mu_0)$ to $\operatorname{spt}(\mu+\mu_0)$ 
(see Lemma \ref{2_2} and also \eqref{eq:3_10}) and a bound on the Willmore energy on the sequence and the limit (see \eqref{eq:2_4} and Lemma \ref{4_1}).
Hence we can also construct comparison immersions as in \cite[p. 317]{Simon} and obtain the following:
For any $\varepsilon>0$ we obtain a $\rho>0$ sufficiently small, such that
there exists radii $0<\tau_1,\ldots,\tau_K\leq \rho$ and immersions $\tilde{f}_m:\Sigma\rightarrow \R^3$ 
respectively $\tilde{f}_{m,0}:\Sigma\oplus \Sigma_0\rightarrow \R^3$, i.e. $\tilde{f}_{m,0}|\Sigma=\tilde{f}_m$, satisfying the boundary conditions \eqref{eq:1_3} for $m$ big enough.
We also have (see \cite[Eq. (3.49)]{Simon})
\begin{equation}
\label{eq:5_1}
 \tilde{f}_{m,0}|_{V_k}=f_{m,0}|_{V_k}
\end{equation}
for some neighbourhood $V_k$ of $f_{m,0}^{-1}(B_{\tau_k}(x_k))$ and all $k=1,\ldots K$.
Furthermore (see \cite[Eq. (3.50)]{Simon})
\begin{equation}
\label{eq:5_2}
 f\big|_{\Sigma_{m,2\rho}}=\tilde{f}_{m,0}\big|_{\Sigma_{m,2\rho}}
\end{equation}
and for $\rho>0$ small enough we obtain by \cite[Eq. (3.51)]{Simon}
\begin{equation}
\label{eq:5_3}
 \int_{\tilde{f}_{m,0}^{-1}(B_{2\rho}(x_k)\setminus B_{\tau_k}(x_k))}|\tilde{A}_m|^2+1\, d\mu_{\tilde{g}}\leq \varepsilon.
\end{equation}
Here $\tilde{A}_{m}$ denotes the second fundamental form of $\tilde{f}_{m,0}$ and $\mu_{\tilde{g}}$ the area measure induced by $\tilde{f}_{m,0}$.\\
By using $\tilde{f}_{m,0}$ we define an arbitrary compact perturbation of $f$ at a good point:
Let $x_0\in (\operatorname{spt}(\mu)\cap f(\Sigma_{m,2\rho}))=(\operatorname{spt}(\mu)\cap\tilde{f}_{m,0}(\Sigma_{m,2\rho}))$ be a good point (cf. Lemma \ref{3_1}). 
Then we find a $\rho_0>0$ such that we can decompose $f(\Sigma_{m,2\rho})\cap B_{\rho_0}(x_0)$ into finitely many $C^{1,\alpha}\cap W^{2,2}$-graphs.
Let us call one of these graphs $u$. Let us now perturb $u$ compactly in $C^{1,\alpha}\cap W^{2,2}$ in $B_{\rho_0}(x_0)$ and call this new graph $u^p$.
This way we define a new immersion $f_{m,0}^p:\Sigma\oplus\Sigma_0\rightarrow \R^3$, which satisfies the boundary conditions and is as $\tilde{f}_{m,0}$ outside of $u^p$ 
(see also Figure \ref{fig_3} for a sketch of the situation).\\
\begin{figure}[h] 
\centering 
\includegraphics{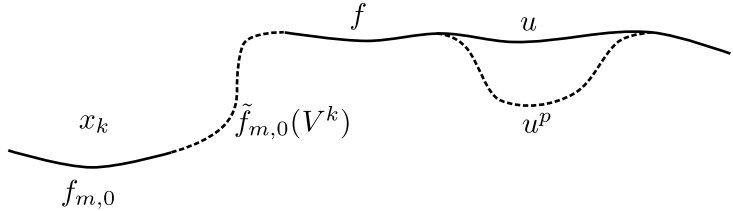}  
\caption{Perturbing $\mu$ outside of the bad points.}
\label{fig_3}
\end{figure}
By the minimising property of $f_{m,0}$ we obtain 
\begin{equation*}
 W_{H_0,\lambda}(f_{m,0})\leq W_{H_0,\lambda}(f^p_{m,0}) + \varepsilon_m,
\end{equation*}
with $\varepsilon_m\rightarrow 0$. Hence by \eqref{eq:5_3}
\begin{equation*}
  W_{H_0,\lambda}(f_{m,0}|_{\Sigma_{m,2\rho}}) \leq W_{H_0,\lambda}(f^p_{m,0}|_{\Sigma_{m,2\rho}}) + C\varepsilon + \varepsilon_m.
\end{equation*}
With the help of our lower semicontinuity result \ref{4_1} we can now let $m\rightarrow \infty$. 
Also note, that $W_{H_0,\lambda}(f^p_{m,0}|_{\Sigma_{m,2\rho}})$ does not depend on $m$, because it is just the Helfrich energy of a perturbed $V^0$ outside of the bad points.
In combination this yields
\begin{equation*}
 W_{H_0,\lambda}(V^0)|_{\R^3\setminus \cup_{k=1}^I B_{2\rho}(x_k)} = W_{H_0,\lambda}(f|_{\Sigma_{m,2\rho}})\leq W_{H_0,\lambda}(f^p_{m,0}|_{\Sigma_{m,2\rho}}) +  C\varepsilon.
\end{equation*}
By \eqref{eq:5_2} we obtain
\begin{equation*}
 W_{H_0,\lambda}(u) \leq W_{H_0,\lambda}(u^p) +C\varepsilon. 
\end{equation*}
Since $\varepsilon>0$ is arbitrary and the inequality does not depend on $\rho$ anymore we finally get
\begin{equation}
 \label{eq:5_4}
 W_{H_0,\lambda}(u) \leq W_{H_0,\lambda}(u^p).
\end{equation}
$u^p$ being a compact perturbation yields $u$ to be Helfrich. 
Since $\varepsilon\rightarrow 0$ implies $\rho\rightarrow0$, $V^0$ is Helfrich outside of the finitely many bad points.
\end{proof}

\begin{lemma}
 \label{5_3}
 The graphs given in Lemma \ref{3_1} are smooth.
\end{lemma}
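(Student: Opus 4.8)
The plan is to run an elliptic bootstrap on the Euler--Lagrange equation \eqref{eq:1_2}, written in terms of the graph function $u_i$. By Lemma \ref{3_1} we already know each graph function $u_i \in (W^{2,2}\cap C^{1,\beta})(B_{\theta\rho_{x_0}}(x_0)\cap L_i)$ with small $C^{1,\beta}$-norm, and by Lemma \ref{5_2} the associated varifold piece is Helfrich away from the bad points, so $u_i$ solves \eqref{eq:1_2} weakly on a (possibly slightly shrunken) ball. First I would write $\bar H$, $K$, $\Delta_f$ and $d\mu_g$ explicitly as quasilinear expressions in $u_i$ and its derivatives up to second order, so that \eqref{eq:1_2} becomes a fourth-order quasilinear equation whose leading part is $\Delta^2 u_i$ plus lower-order terms that are analytic in $(u_i,\nabla u_i)$ and at most quadratic in $D^2 u_i$, with coefficients controlled because $\|\nabla u_i\|_\infty$ is small (so the induced metric is uniformly elliptic). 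One must first interpret the equation distributionally: test \eqref{eq:1_2} against $C_c^\infty$ functions after integrating by parts once, so that only $\nabla \bar H \in L^2$ (equivalently $D^2 u_i$ up to a bounded nonlinear factor, giving at worst $L^2$ data) appears against first derivatives of the test function. This is exactly the variational characterization already set up around \eqref{eq:4_4}.

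Next I would bootstrap. Starting point: $u_i \in W^{2,2}$, hence $D^2 u_i \in L^2$, $\nabla u_i \in C^{0,\beta}$ and the zeroth/first order coefficients are $C^{0,\beta}$. By Sobolev embedding in two dimensions, $W^{2,2} \hookrightarrow W^{1,p}$ for all $p<\infty$, and the right-hand side of the reorganized equation lies in a negative Sobolev space good enough that $L^p$-theory for the bilaplacian (Calder\'on--Zygmund, e.g. as in Gilbarg--Trudinger or Simon's lecture notes) upgrades $u_i$ to $W^{3,2}_{loc}$, then — since now $D^2 u_i$ gains H\"older continuity — to $C^{2,\gamma}_{loc}$ via Schauder estimates for $\Delta^2$. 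Once $u_i \in C^{2,\gamma}$, all coefficients in \eqref{eq:1_2} are $C^{0,\gamma}$ and the right-hand side is $C^{0,\gamma}$, so Schauder gives $u_i \in C^{4,\gamma}_{loc}$; iterating, $u_i \in C^{\infty}_{loc}$. I would carry this out on nested balls $B_{(1-\delta)r} \Subset B_r$, absorbing the quadratic $D^2 u_i$ terms using the smallness from \eqref{eq:3_3} and interpolation, so that each step is a genuine gain of regularity rather than a circular estimate.

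The main obstacle is the first step: passing from the merely $W^{2,2}\cap C^{1,\beta}$ regularity to $C^{2,\gamma}$ through an equation whose nonlinearity is quadratic in the highest derivatives present ($D^2 u_i$ appears squared in $|A|^2$, in $K$, and inside $\Delta_f$). A naive application of linear elliptic regularity fails because the forcing term is only in $L^1$ a priori. The way around this is to exploit that the $L^2$-norm of $D^2u_i$ over small balls is itself small by \eqref{eq:3_4} (the power decay of $\int |A|^2$), so on sufficiently small balls the quadratic terms can be treated as a small perturbation and absorbed into the left-hand side, or handled by a difference-quotient argument \`a la Simon \cite{Simon}; alternatively one invokes the a priori $L^2$-estimate on $\nabla \bar H$ built into the varifold (from $H_{V^0}\in W^{1,2}$, which follows once one knows the limit is stationary for the Helfrich functional). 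Since the underlying geometry and smallness are identical to the Willmore case treated in \cite{Simon} and \cite{Schaetzle}, I would state the quasilinear structure, note that $W_{H_0,\lambda}$ differs from the Willmore functional only by lower-order (in the number of derivatives) terms — precisely as in Remark \ref{3_2} — and conclude that their bootstrap applies verbatim, with the extra $H_0$- and $\lambda$-terms contributing only $C^\infty$ lower-order perturbations that do not affect any step.
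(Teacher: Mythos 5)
Your proposal follows essentially the same route as the paper, which simply cites Sch\"atzle's Prop.\ 3.1 and Simon's Lemma 3.2 after verifying — exactly as you do — that the weak Helfrich equation obtained from Lemma \ref{5_2} differs from the Willmore equation only by terms quadratic in $D^2u_i$ with admissible growth, and that the smallness and decay from Lemma \ref{3_1} let Simon's difference-quotient bootstrap apply unchanged, with the $H_0$- and $\lambda$-terms entering as lower-order perturbations. One small slip in your bootstrap chain: in two dimensions $W^{3,2}_{loc}$ does \emph{not} give $D^2u_i\in C^{0,\gamma}_{loc}$ (the Sobolev exponent is borderline), so you must first pass to $W^{3,p}_{loc}$ for some $p>2$ before invoking Schauder — but this is precisely what the cited difference-quotient argument of Simon provides, so the overall strategy is sound.
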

\begin{proof}
The only assumptions needed for the proof of \cite[Prop. 3.1]{Schaetzle} are for the graph to satisfy
 the estimates \cite[Eq. (3.1), (3.2)]{Schaetzle} 
 and an equation of the form \cite[Eq. (3.4)]{Schaetzle} with the growth conditions \cite[Eq. (3.5)]{Schaetzle}.
 Hence we need to check these conditions and the assumptions of \cite[Lemma 3.2]{Simon}.
 
 Lemma \ref{5_2} shows that our graphs satisfy the Helfrich equation \eqref{eq:1_2} weakly.
 Since the first terms of the Helfrich equation are just the Willmore equation (see e.g. \cite[Eq. (4)]{DeckGruRoe})
 the growth conditions for the highest order already follow by the reasoning of \cite[p. 310]{Simon}.
 The other terms are a quadratic polynomial in the second derivative of the graph (see  e.g. \cite[Eq. (8),(9)]{DeckGruRoe})
 and we have the estimates in Lemma \ref{3_1}. 
 Hence the result follows from \cite[Lemma 3.2]{Simon} and \cite[Prop. 3.1]{Schaetzle}.  
\end{proof}

The next lemma finally proves Theorem \ref{1_1}.
\begin{lemma}[cf. Prop. 4.1 in \cite{Schaetzle}]
 \label{5_4}
 There exists an oriented manifold $\tilde{\Sigma}$ and a branched immersion $f:\tilde{\Sigma}\rightarrow\R^3$ such that $f$
 is Helfrich and satisfies the boundary conditions \eqref{eq:1_3} outside the finitely many branch points. Furthermore $f$ is continuous at the branch points.
\end{lemma}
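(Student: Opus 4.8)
The plan is to construct $\tilde\Sigma$ and $f$ by gluing together two pieces: the regular part of $V^0$ coming from the graphical decomposition of Lemma \ref{3_1}, and small ``caps'' near each of the finitely many bad points, obtained as limits of $f_{m,0}$ restricted to $f_{m,0}^{-1}(B_\rho(x_k))$. First I would fix the bad points $x_1,\dots,x_K$ and a small radius $\rho>0$ with the balls $B_\rho(x_k)$ pairwise disjoint and, by Lemma \ref{3_1}, such that $\operatorname{spt}(\mu+\mu_0)\setminus\bigcup_k B_\rho(x_k)$ decomposes locally into $C^{1,\alpha}\cap W^{2,2}$-graphs which by Lemma \ref{5_3} are in fact smooth and by Lemma \ref{5_2} are Helfrich. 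Using Lemma \ref{5_1} I obtain, for $\rho$ small and $m$ large, a surjective $C^{1,\alpha}\cap W^{2,2}$-immersion $f:\Sigma_{m,\rho}\to\operatorname{spt}(\mu+\mu_0)\setminus\bigcup_k B_\rho(x_k)$ with $\Sigma_{m,\rho}=f_{m,0}^{-1}(\R^3\setminus\bigcup_k B_\rho(x_k))$, which near $\Gamma$ realises the Dirichlet conditions \eqref{eq:1_3} (the conormal condition passing to the limit because of the boundary regularity part of Lemma \ref{3_1}).

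Next I would handle the neighbourhoods of the bad points. Since the topology of $\Sigma$ is fixed, $f_{m,0}^{-1}(B_\rho(x_k))$ is, for $m$ large, a surface with boundary of controlled topology, and the energy bound \eqref{eq:2_4} together with the diameter estimate as in Lemma \ref{2_1} shows that $f_{m,0}(f_{m,0}^{-1}(B_\rho(x_k)))$ is a bounded set with bounded $\int|A_{m,0}|^2$. Letting $\rho\to0$ along a diagonal subsequence, and using that the total energy concentrated at a bad point is at most $\nu_0(\{x_k\})$ which is finite, one produces a limit ``branch disc'' $g_k$ — this is exactly the construction carried out in \cite[pp. 311--317]{Simon} and \cite[Prop. 4.1]{Schaetzle}, where point removability for the Willmore-type equation (the highest order term of \eqref{eq:1_2} is the Willmore operator, the remaining terms being lower order with the quadratic growth controlled by Lemma \ref{3_1}) yields that $g_k$ is a branched conformal immersion of a disc, smooth away from the centre, Helfrich there, and continuous at the centre. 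Then I would define $\tilde\Sigma$ by gluing the $\Sigma_{m,\rho}$ (for the chosen large $m$) to the $K$ abstract discs along the circles $\partial B_\rho(x_k)$, matching the $C^{1,\alpha}$-boundary values, and orient $\tilde\Sigma$ compatibly (possible since each piece is oriented and the gluing circles inherit matching orientations from the ambient orientation of $V^0$). The resulting $f:\tilde\Sigma\to\R^3$ is then a branched immersion: it is smooth and Helfrich away from the finitely many branch points (the centres of the glued discs), continuous everywhere, and satisfies \eqref{eq:1_3} because the boundary piece is untouched by the gluing, as $\Gamma$ is disjoint from the bad points for $\rho$ small by Lemma \ref{2_2}.

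The main obstacle is the gluing step and the verification that the glued object is genuinely a branched immersion of a \emph{manifold} $\tilde\Sigma$ with the correct boundary behaviour, rather than merely a varifold or an immersion with uncontrolled topology. One must check that $f|_{\partial\Sigma_{m,\rho}}$ over each circle $\partial B_\rho(x_k)$ is a finite union of closed $C^{1}$-curves that can serve as boundaries of the abstract discs — this uses the graphical decomposition of Lemma \ref{3_1} on the annulus $B_{2\rho}(x_k)\setminus B_\rho(x_k)$ together with \eqref{eq:3_6}, \eqref{eq:3_15}, which bound the diameters of the ``pimples'' and hence force the boundary to be close to finitely many planar circles — and one must verify that no new branch points are created on the gluing circles (which follows from $C^{1,\alpha}$-regularity there, the circles being good points). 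A secondary subtlety is showing $f$ is still Helfrich \emph{across} the gluing circles, i.e. in all of $\operatorname{spt}(\mu+\mu_0)\setminus\{x_1,\dots,x_K\}$ and not just away from the circles: this is precisely the content of Lemma \ref{5_2}, whose perturbation argument is valid at every good point including those on $\partial B_\rho(x_k)$, so letting $\rho\to0$ removes the circles from the singular set. Since all of these ingredients are either established above or are carried out in detail in \cite{Simon}, \cite{Schaetzle} and \cite{NdiayeSchaetzle}, I would present this as a construction and refer to those papers for the technical gluing estimates.
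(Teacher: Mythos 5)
Your proposal follows the same route as the paper's own proof, which is simply a one-line citation to Schätzle's Prop.\ 4.1 (with the first part elaborated in Ndiaye--Schätzle): build the regular part from Lemma~\ref{5_1}, control topology and remove the point singularities at the bad points using the graphical decomposition and the Willmore-type removability (the lower-order Helfrich terms being handled as in Lemma~\ref{5_3}), then glue and carry the orientation. Your reconstruction captures the essential mechanism of the cited argument, including the key checks on the gluing circles and on orientation compatibility.
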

\begin{proof}
The proof is the same as \cite[Prop. 4.1]{Schaetzle}. 
The first part of it up to \cite[Eq. (4.2)]{Schaetzle} is explained in greater detail in \cite{NdiayeSchaetzle}.
\end{proof}

\appendix
\section{Auxiliary Results}
\label{sec:A}
For the readers convenience we collect a few needed results:

The following  is a variant of Allard's regularity Theorem. 
A proof of this statement can be found in \cite[Section 3]{Simon} or \cite[Korollar 20.3]{Schaetzle_Skript_Allard} (see also \cite[Theorem B.1]{Schaetzle}).
\begin{theorem}[Allard's regularity Theorem, see \cite{Allard}, Theorem 8.16]
\label{A_1}
For $n,m\in\N$, $0<\beta<1$, $\alpha>0$ there exist $\varepsilon_0=\varepsilon_0(n,m,\alpha,\beta)>0$, $\gamma=\gamma(n,m,\alpha,\beta)$ and $C=C(n,m,\alpha,\beta)$ such that:\\
Let $\mu$ be an integral $n$-varifold in $B_{\rho_0}^{n+m}(0)$, $0<\rho_0<\infty$, $0<\varepsilon<\varepsilon_0$ with locally bounded first variation in $B_{\rho_0}^{n+m}(0)$ satisfying
\begin{equation}
 \label{eq:A_1}
 \rho^{1-n}\|\delta \mu\|(B_\rho)\leq \varepsilon^2(\rho^{-n}\mu(B_{\rho}))^{1-\alpha}\rho^{2\beta}\rho_0^{-2\beta},\quad \forall B_{\rho}\subset B_{\rho_0}(0)
\end{equation}
or weak mean curvature $H_{\mu}\in L^2(\mu\lfloor B_{\rho_0}^{n+m}(0))$ satisfying
\begin{equation}
 \label{eq:A_2}
 (\rho^{2-n})\left(\int_{B_\rho}|H_\mu|^2\, d\mu\right)^{\frac{1}{2}}\leq \varepsilon(\rho^{-n}\mu(B_\rho))^{\frac{1}{2}-\alpha}\rho^\beta\rho_0^{-\beta},\quad \forall B_\rho\subset B_{\rho_0}(0)
\end{equation}
and 
\begin{equation}
 \label{eq:A_3}
 0\in\operatorname{spt}\mu,\ \rho_0^{-n}\mu(B_{\rho_0}(0))\leq(1+\varepsilon)\omega_n.
\end{equation}
Then there exists $u\in C^{1,\beta}(B_{\gamma\varepsilon\rho_0}^n(0),\R^m)$ $u(0)=0$, such that after rotation
\begin{equation}
 \label{eq:A_4}
 \mu\lfloor B_{\gamma\varepsilon\rho_0}^{n+m}(0)=\mathcal{H}^n\lfloor(\operatorname{graph} u\cap B_{\gamma\varepsilon\rho_0}^{n+m}(0))
\end{equation}
and
\begin{equation}
 \label{eq:A_5}
 (\varepsilon\rho_0)^{-1}\|u\|_{L^{\infty}(B^n_{\gamma\varepsilon\rho_0}(0))}+\|\nabla u\|_{L^{\infty}(B^n_{\gamma\varepsilon\rho_0}(0))} + (\varepsilon\rho_0)^\beta \operatorname{h\ddot{o}l}_{B_{\gamma\varepsilon\rho_0}^n(0),\beta}\nabla u \leq C\varepsilon^{\frac{1}{2(n+1)}}.
\end{equation}
\end{theorem}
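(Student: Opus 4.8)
The plan is to reproduce the standard proof of Allard's theorem (as in \cite{Allard} or \cite[Section 3]{Simon}) via an excess–decay iteration, so I will only indicate the skeleton. First I would reduce the two alternative hypotheses to a single one: since $\mu$ has no generalized boundary in $B_{\rho_0}^{n+m}(0)$, one has $\|\delta\mu\|(B_\rho)=\int_{B_\rho}|H_\mu|\,d\mu\le\big(\int_{B_\rho}|H_\mu|^2\,d\mu\big)^{1/2}\mu(B_\rho)^{1/2}$, so condition \eqref{eq:A_2} implies \eqref{eq:A_1} after relabelling $\alpha$ and adjusting the constants. Hence it suffices to argue under \eqref{eq:A_1}.

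Next I would invoke the almost–monotonicity formula: under \eqref{eq:A_1} the map $\sigma\mapsto e^{c\sigma^\beta}\sigma^{-n}\mu(B_\sigma(x))$ is nondecreasing for $x\in\operatorname{spt}\mu\cap B_{\rho_0/2}(0)$, so \eqref{eq:A_3} forces $\sigma^{-n}\mu(B_\sigma(x))\le(1+C\varepsilon)\omega_n$ for all such $x$ and all $\sigma\lesssim\rho_0$; since $\mu$ is integral the lower bound $\ge\omega_n$ holds as well, so the density ratio is pinched near $\omega_n$ at every such scale and, in particular, the density equals $1$ on a neighbourhood of $0$ in $\operatorname{spt}\mu$.

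The core is the excess–decay step. Writing $E(x,\sigma)=\inf_P\sigma^{-n}\int_{B_\sigma(x)}\|T_y\mu-P\|^2\,d\mu(y)$ for the tilt excess (the infimum over $n$-planes $P$) and $R(x,\sigma)=\big(\sigma^{1-n}\|\delta\mu\|(B_\sigma(x))\big)^2$, which by \eqref{eq:A_1} satisfies $R(x,\sigma)\lesssim\varepsilon^2(\ldots)\sigma^{2\beta}\rho_0^{-2\beta}$, I would prove the decay lemma: there is $\theta\in(0,1)$ so that if the density ratios are pinched and $E(x,\sigma)$ is small enough, then $E(x,\theta\sigma)\le C\theta^{2}E(x,\sigma)+C\,R(x,\sigma)$. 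The argument is the usual Lipschitz approximation plus harmonic comparison: smallness of the excess together with the density bounds lets one realise $\operatorname{spt}\mu\cap B_\sigma(x)$ as the graph of a Lipschitz function $u$ off a set of small $\mu$-measure, with $\|\nabla u\|$ controlled by $E$; comparing $u$ with its harmonic replacement and using the interior $C^{1,1}$-estimate for harmonic functions produces the gain $\theta^{2}$, the comparison error being absorbed into $R(x,\sigma)$. A compactness argument at the top scale shows that \eqref{eq:A_3} already makes $E(0,\rho_0/2)$ small, since unit density together with density ratio $\le1+\varepsilon$ forces $L^2$-flatness.

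Finally I would iterate. Because $R(x,\sigma)\lesssim\varepsilon^2\sigma^{2\beta}\rho_0^{-2\beta}$ and one may choose $\theta$ with $\theta^{2}>\theta^{2\beta}$, iterating the decay lemma at $0$ and at all nearby points of $\operatorname{spt}\mu$ gives $E(x,\sigma)\le C\varepsilon^{\kappa}(\sigma/\rho_0)^{2\beta}$ for small $\sigma$, with $\kappa=\kappa(n)$; the Campanato/Morrey characterisation of Hölder continuity then upgrades this to a single $C^{1,\beta}$ graph $u$ over a fixed $n$-plane on $B^n_{\gamma\varepsilon\rho_0}(0)$, with $\graph u$ of multiplicity $1$, which is \eqref{eq:A_4}, and summing the tilt estimates along the iteration yields the sup-norm, gradient and Hölder bounds of \eqref{eq:A_5}, the exponent $\tfrac{1}{2(n+1)}$ and the radius $\gamma\varepsilon\rho_0$ being read off from the constants in the Lipschitz approximation and the monotonicity. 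The hard part is the excess–decay lemma: making the harmonic approximation quantitative and carrying the first–variation term $R$ with exactly the power $\sigma^{2\beta}$ needed to sum the geometric series; the monotonicity formula, the Lipschitz approximation, and the Campanato iteration are comparatively routine once that lemma is in hand.
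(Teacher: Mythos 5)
The paper itself does not prove this theorem; it cites \cite[Section 3]{Simon}, \cite[Korollar 20.3]{Schaetzle_Skript_Allard} and \cite[Theorem B.1]{Schaetzle}. Your overall strategy (almost-monotonicity for density pinching, Lipschitz approximation, harmonic comparison for excess decay, Campanato iteration) is exactly the standard route taken in those references, and the skeleton is sound. Two points need correction, the first of which is a genuine gap.

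The reduction of \eqref{eq:A_2} to \eqref{eq:A_1} via Cauchy--Schwarz does not work with the stated exponents. From \eqref{eq:A_2} and $\|\delta\mu\|(B_\rho)=\int_{B_\rho}|H_\mu|\,d\mu\le\big(\int_{B_\rho}|H_\mu|^2\,d\mu\big)^{1/2}\mu(B_\rho)^{1/2}$ you only get
\begin{equation*}
\rho^{1-n}\|\delta\mu\|(B_\rho)\ \le\ \varepsilon\,(\rho^{-n}\mu(B_\rho))^{1-\alpha}\,\rho^{\frac{n}{2}-1+\beta}\rho_0^{-\beta},
\end{equation*}
which for $n=2$ is $\varepsilon\,(\ldots)^{1-\alpha}\rho^{\beta}\rho_0^{-\beta}$. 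Matching this to the shape of \eqref{eq:A_1}, i.e. $\varepsilon^2(\ldots)^{1-\alpha'}\rho^{2\beta'}\rho_0^{-2\beta'}$, forces either $\varepsilon\ge 1$ (impossible since $\varepsilon<\varepsilon_0<1$) or $\beta'=\beta/2$, which would only yield $u\in C^{1,\beta/2}$ instead of $C^{1,\beta}$ and therefore does not recover \eqref{eq:A_5}. The two hypotheses are genuinely parallel, not nested: under \eqref{eq:A_2} the right inhomogeneity to carry through the excess-decay iteration is $R(x,\sigma)=\sigma^{2-n}\int_{B_\sigma(x)}|H_\mu|^2\,d\mu$, which by \eqref{eq:A_2} decays like $\sigma^{2\beta}\rho_0^{-2\beta}$ directly, and the harmonic (or biharmonic) replacement uses the $L^2$ structure of $H_\mu$ rather than a total-variation bound on $\delta\mu$. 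Since the application in Lemma~\ref{3_1} verifies \eqref{eq:A_2} (see \eqref{eq:3_17}) and not \eqref{eq:A_1}, the case your reduction drops is precisely the case the paper needs; you should run the excess-decay argument for \eqref{eq:A_2} directly rather than trying to subsume it.

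Minor: you write that one may choose $\theta$ with $\theta^{2}>\theta^{2\beta}$; since $0<\theta<1$ and $0<\beta<1$ one has $\theta^{2}<\theta^{2\beta}$, which is what you actually want (the geometric gain from harmonic comparison beats the target decay $\theta^{2\beta}$). This is a sign slip, not a structural problem.
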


\begin{theorem}[Biharmonic comparison lemma, see \cite{Simon}, Lemma 2.2]
 \label{A_2}
 Let $\Sigma\subset\R^n$ be a smooth embedded $2$-dimensional manifold, $\xi\in\R^n$, $L$ a $2$-dimensional plane containing $\xi$, $u\in C^\infty(U)$ for some open ($L$-)neighbourhood $U$ of $L\cap\partial B_\rho (\xi)$ and
\begin{equation*}
 \operatorname{graph} u\subset\Sigma,\quad |D u|\leq 1.
\end{equation*}
Also let $w\in C^\infty(L\cap \overline{B_\rho(\xi)})$ satisfy
\begin{equation*}
 \left\{\begin{array}{cc}\Delta^2 w=0,& \mbox{ on }L\cap B_\rho(\xi)\\ w=u,\ Dw=Du, &\mbox{ on } L\cap\partial B_\rho(\xi).\end{array}\right.
\end{equation*}
Then 
\begin{equation*}
 \int_{L\cap B_\rho(\xi)}|D^2 w|\, d\mathcal{L}^2\leq C\rho \int_\gamma |A|^2\, d\mathcal{H}^1,
\end{equation*}
where $\gamma=\operatorname{graph}(u|_{L\cap\partial B_\rho(\xi)})$ and $A$ is the second fundamental form of $\Sigma$. $C$ is a fixed constant independent of $\Sigma$ and $\rho$.

\end{theorem}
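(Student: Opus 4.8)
\emph{Normalisation.}
The plan is first to scale away $\xi$ and $\rho$ and then to reduce the estimate to a one–dimensional Fourier computation on the circle. After translating so that $\xi=0$ and rescaling $x\mapsto\rho^{-1}x$ — under which $u,w,\Sigma$ pass to $\rho^{-1}u(\rho\,\cdot)$, $\rho^{-1}w(\rho\,\cdot)$, $\rho^{-1}\Sigma$, with $w$ still biharmonic for the rescaled Cauchy data — the quantity $\int_{L\cap B_\rho(\xi)}|D^2w|^2\,d\mathcal L^2$ is scale invariant, while $\rho\int_\gamma|A|^2\,d\mathcal H^1$ becomes $\int_{\tilde\gamma}|A_{\tilde\Sigma}|^2\,d\mathcal H^1$ for the rescaled objects. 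Hence, identifying $L\cong\R^2$, it suffices to prove $\int_{L\cap B_1}|D^2w|^2\,d\mathcal L^2\le C\int_\gamma|A|^2\,d\mathcal H^1$ with $\gamma=\graph(u|_{L\cap\partial B_1})$.

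\emph{From the bound for $w$ to a bound on the Cauchy data.}
Since $\Delta^2 w=0$ is the Euler--Lagrange equation of the strictly convex bending functional $v\mapsto\int_{B_1}|D^2 v|^2$, $w$ minimises it among all $v\in W^{2,2}(B_1;L^\perp)$ with $(v,Dv)|_{\partial B_1}=(u,Du)|_{\partial B_1}$; equivalently, expanding $w$ in its Fourier series on the disc (the radial part of the angular mode $k$ being spanned by $r^{|k|}$ and $r^{|k|+2}$) one computes $\int_{B_1}|D^2w|^2$ directly as a quadratic form in the Fourier coefficients $\widehat g_0(k),\widehat g_1(k)$ of $g_0:=u|_{\partial B_1}$, $g_1:=\partial_n u|_{\partial B_1}$. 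Biharmonicity forces this quadratic form to annihilate the Cauchy data of affine functions (as it must, since $D^2 w=D^2(w-\ell)$ for affine $\ell$), and the outcome is an estimate $\int_{B_1}|D^2w|^2\le C\,\mathcal Q(g_0,g_1)$ by an affine–blind seminorm of type $H^{3/2}\times H^{1/2}(\partial B_1\cap L)$, concretely $\mathcal Q(g_0,g_1)=\sum_{|k|\ge2}(|k|^3|\widehat g_0(k)|^2+|k||\widehat g_1(k)|^2)+|\widehat g_1(0)|^2+\sum_{|k|=1}|\widehat g_1(k)-\widehat g_0(k)|^2$.

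\emph{Estimating the Cauchy data by $\int_{\partial B_1\cap L}|D^2u|^2$.}
Differentiating the Cauchy data along the circle (with $\tau,n$ its unit tangent and outer normal) gives the two identities $\partial_\tau^2 g_0=\partial_{\tau\tau}u-\partial_n u$ and $\partial_\tau g_1=\partial_{\tau n}u+\partial_\tau u$, i.e. in Fourier modes $\widehat{\partial_{\tau\tau}u}(k)=\widehat g_1(k)-k^2\widehat g_0(k)$ and $\widehat{\partial_{\tau n}u}(k)=ik(\widehat g_1(k)-\widehat g_0(k))$, whose left sides are entries of $D^2u$. Subtracting, $\widehat g_0(k)=(1-k^2)^{-1}(\widehat{\partial_{\tau\tau}u}(k)-(ik)^{-1}\widehat{\partial_{\tau n}u}(k))$ for $|k|\ge2$, so, summing against the weights in $\mathcal Q$ and using $|k|\le k^2$, the high–mode part of $\mathcal Q$ is controlled; the low modes $|k|\le1$ are handled by the same identities ($\widehat g_1(0)=\widehat{\partial_{\tau\tau}u}(0)$, $\widehat g_1(k)-\widehat g_0(k)=(ik)^{-1}\widehat{\partial_{\tau n}u}(k)$), the $Du$–terms dropping out precisely because $\mathcal Q$ ignores affine data. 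This yields $\mathcal Q(g_0,g_1)\le C(\|\partial_{\tau\tau}u\|_{L^2(\partial B_1\cap L)}^2+\|\partial_{\tau n}u\|_{L^2(\partial B_1\cap L)}^2)\le C\int_{\partial B_1\cap L}|D^2u|^2\,d\mathcal H^1$.

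\emph{Comparison with the curvature integral and the main difficulty.}
Finally $\gamma=\graph(u|_{\partial B_1\cap L})\subset\Sigma$, so the second fundamental form of $\Sigma$ along $\gamma$ coincides with that of $\graph u$; from $|Du|\le1$ one has the pointwise equivalence $c|D^2u|^2\le|A|^2\le C|D^2u|^2$ along $\gamma$, and since the base projection $\gamma\to\partial B_1\cap L$ is $1$–Lipschitz, $\int_{\partial B_1\cap L}|D^2u|^2\,d\mathcal H^1\le C\int_\gamma|A|^2\,d\mathcal H^1$. Chaining the three estimates proves the lemma with a universal constant. I expect the only genuinely delicate point to be the passage $\int_{B_1}|D^2w|^2\le C\,\mathcal Q\le C\int_{\partial B_1}|D^2u|^2$: the crude trace estimate for the biharmonic Dirichlet problem carries a spurious additive term of size $\|Du\|_{L^2(\partial B_1)}^2$, which is only bounded, not small, and the whole argument rests on the fact — visible through the affine invariance of $\mathcal Q$ together with the two differential identities above — that this term enters $\int_{B_1}|D^2w|^2$ only through combinations of the Cauchy data that it does not see.
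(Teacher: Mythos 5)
Your argument is correct, but it cannot be compared line-by-line with a proof in this paper, because the paper offers none: Theorem \ref{A_2} is quoted as an auxiliary result and deferred entirely to Simon's Lemma 2.2. Simon's published argument achieves the same reduction as yours -- control of the Hessian energy of the biharmonic extension by tangential second derivatives of the Cauchy data, exploiting that subtracting an affine function changes neither $D^2w$ nor the curvature side -- but by more hands-on comparison estimates, whereas you implement it through the explicit affine-blind Fourier seminorm $\mathcal Q$ on $\partial B_1$. Your version is self-contained and makes the one delicate point (that the spurious $\|Du\|_{L^2(\partial B_1)}$ term in a crude trace estimate never enters) completely transparent; the scaling reduction, the identities $\partial_\tau^2 g_0=\partial_{\tau\tau}u-\partial_n u$ and $\partial_\tau g_1=\partial_{\tau n}u+\partial_\tau u$, the inversion $\hat g_0(k)=(1-k^2)^{-1}\bigl(\widehat{\partial_{\tau\tau}u}(k)-(ik)^{-1}\widehat{\partial_{\tau n}u}(k)\bigr)$ for $|k|\ge 2$, and the graph--curvature comparison under $|Du|\le 1$ (where $g\le 2I$ gives $|A|^2\ge c\,|D^2u|^2$ and the $1$-Lipschitz projection transfers the integral from $\gamma$ to the circle) are all sound, read componentwise for the $L^\perp$-valued $u$.

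Two small remarks. You prove the estimate for $\int|D^2w|^2$, which is what is actually meant: the exponent $2$ is missing in the statement as printed (the unsquared version is dimensionally inconsistent, and in the proof of Lemma \ref{3_1} the lemma is chained with Lemma \ref{A_5} against $\int|D^2w|^2$, as in Simon's original). Also, the inequality $\int_{B_1}|D^2w|^2\le C\,\mathcal Q(g_0,g_1)$ is only sketched; a literal mode-by-mode computation of the Cartesian Hessian energy is slightly awkward since each Cartesian derivative shifts angular modes, so either compute with $\int|\Delta w|^2$ (which differs from $\int|D^2w|^2$ by a null-Lagrangian boundary term determined by the Cauchy data and is mode-diagonal) or invoke the standard extension estimate $\|w\|_{H^2(B_1)}\le C\bigl(\|g_0\|_{H^{3/2}}+\|g_1\|_{H^{1/2}}\bigr)$ together with $D^2w_{g_0,g_1}=D^2w_{g_0-\ell,\,g_1-\partial_n\ell}$ for affine $\ell$, which yields exactly your quotient seminorm. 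This is a matter of presentation, not a gap.
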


\begin{theorem}[Trace extension lemma, see \cite{Schaetzle}, Lemma A.1]
 \label{A_3}
Let
\begin{equation*}
 B_{\rho}^+(0):=\{(y,t)\in B_\rho(0)\subset\R^{n-1}\times\R|\ t>\Psi(y)\},
\end{equation*}
where $\Psi\in C^2(\overline{B_\rho^{n-1}(0)})$, $\Psi(0)=0$, $\Psi'(0)=0$, $|\Psi'|\leq \varepsilon_0$ for some $\varepsilon_0$ small enough, $|D^2\Psi|\leq\Lambda$ for some $\lambda<\infty$ and let $u\in C^2(\partial B^+_{\rho}(0))$.\\
Then there exists $w\in C^2(\overline{B_{\rho}^+(0)})$ such that
\begin{equation*}
 w=u,\ \nabla w =\nabla u \mbox{ on }\partial B_\rho^+(0),
\end{equation*}
\begin{equation}
 \label{eq:A_6}
 \rho^{-1}|w|+|\nabla w|\leq C(n,\Lambda)\left(\rho^{-1}\|u\|_{L^\infty(\partial B_\rho^+(0))} + \|\nabla u\|_{L^\infty(\partial B_\rho^+(0))}\right),
\end{equation}
\begin{equation}
\label{eq:A_7}
 \int_{B_{\rho}^+(0)}|D^2 w|^2\, d\mathcal{L}^n\leq C(n,\Lambda)\rho \int_{\partial B_\rho^+(0)}|D^2u|^2\, d\mathcal{H}^{n-1}.
\end{equation}
\end{theorem}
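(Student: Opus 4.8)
The plan is to prove Theorem \ref{A_3} by reducing to a flat-bottomed model domain, peeling off the affine part of the boundary data, and extending the remainder with an operator that is at once $H^2$-bounded on Cauchy data and pointwise well behaved. First I would normalize: both \eqref{eq:A_6} and \eqref{eq:A_7} are invariant under $x\mapsto\rho x$, $w\mapsto\rho\,w(\cdot/\rho)$ (which replaces $\Lambda$ by $\rho\Lambda$, only helping for $\rho\le1$), so one may take $\rho=1$. The map $\Phi(y,t):=(y,\,t-\Psi(y))$ is then a $C^2$-diffeomorphism carrying $B_1^+(0)$ onto a domain $\Omega$ with flat lower face inside $\{x_n=0\}$; since $|\Psi'|\le\varepsilon_0$ and $|D^2\Psi|\le\Lambda$, $D\Phi$ and $D\Phi^{-1}$ are $C\varepsilon_0$-close to the identity and $|D^2\Phi|\le C\Lambda$, so pulling $u$ back and pushing the extension forward changes each side of \eqref{eq:A_6}--\eqref{eq:A_7} only by a factor $C(n,\Lambda)$. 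Thus it suffices to handle $\Omega$, a small $C^2$-perturbation of the half-ball $\{|x|<1,\ x_n>0\}$ — a flat lower face, a $C^2$ (nearly spherical) upper face, meeting along an edge — with $\partial\Omega$ a compact connected $(n-1)$-manifold without boundary of diameter $\sim1$ and second fundamental form $\mathrm{II}_{\partial\Omega}=O(1)$.

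Next I would split the data. Let $\ell$ be the affine function whose value and gradient at the centroid of $\partial\Omega$ are the $\mathcal H^{n-1}$-averages of $u$ and $\nabla u$ over $\partial\Omega$, and set $v:=u-\ell$; then $D^2\ell\equiv0$, so $D^2w=D^2(w-\ell)$ and the right-hand side of \eqref{eq:A_7} equals $\int_{\partial\Omega}|D^2v|^2$. Since the affine functions are exactly the kernel of $D^2$ on a connected set, Poincar\'e's inequality on $\partial\Omega$ gives $\|v\|_{L^2(\partial\Omega)}+\|\nabla v\|_{L^2(\partial\Omega)}\le C\|D^2u\|_{L^2(\partial\Omega)}$ together with $\|D^2v\|_{L^2(\partial\Omega)}=\|D^2u\|_{L^2(\partial\Omega)}$, while trivially $\|v\|_{L^\infty(\partial\Omega)}\le\|u\|_{L^\infty}+C\|\nabla u\|_{L^\infty}$ and $\|\nabla v\|_{L^\infty(\partial\Omega)}\le2\|\nabla u\|_{L^\infty}$.

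Then I would extend. Extend $\ell$ by itself; it has vanishing Hessian and the pointwise bounds needed for \eqref{eq:A_6}. For the Cauchy pair $(v|_{\partial\Omega},\partial_\nu v|_{\partial\Omega})$ I would use an extension operator $E\colon H^{3/2}(\partial\Omega)\times H^{1/2}(\partial\Omega)\to H^2(\Omega)$ realizing the prescribed trace and normal derivative and satisfying $\|E(g_0,g_1)\|_{\dot H^2(\Omega)}\le C\big(\|g_0\|_{\dot H^{3/2}(\partial\Omega)}+\|g_1\|_{\dot H^{1/2}(\partial\Omega)}\big)$ — e.g.\ built from the biharmonic Poisson kernel on the half-space, transplanted to $\Omega$ by a partition of unity on whose members the upper face is straightened by a $C^{1,1}$-chart, or simply taken to be the biharmonic solution operator on $\Omega$, which minimizes $\int|D^2\cdot|^2$ among competitors with the given Cauchy data. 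With $W:=E(v|_{\partial\Omega},\partial_\nu v|_{\partial\Omega})$ and $w:=\ell+W$ one has $w=u$, $\nabla w=\nabla u$ on $\partial\Omega$; by interpolation, $\|v\|_{\dot H^{3/2}(\partial\Omega)}^2\lesssim\|\nabla v\|_{L^2(\partial\Omega)}\|D^2v\|_{L^2(\partial\Omega)}\lesssim\|D^2u\|_{L^2(\partial\Omega)}^2$, and $\|\partial_\nu v\|_{\dot H^{1/2}(\partial\Omega)}^2\lesssim\|\partial_\nu v\|_{L^2(\partial\Omega)}\|\partial_\nu v\|_{\dot H^1(\partial\Omega)}\lesssim\|D^2u\|_{L^2(\partial\Omega)}^2$, where $\|\partial_\nu v\|_{\dot H^1(\partial\Omega)}\lesssim\|D^2v\|_{L^2(\partial\Omega)}+\|\mathrm{II}_{\partial\Omega}\|_{L^\infty}\|\nabla v\|_{L^2(\partial\Omega)}\lesssim\|D^2u\|_{L^2(\partial\Omega)}$ thanks to $\mathrm{II}_{\partial\Omega}=O(1)$; hence $\int_\Omega|D^2W|^2\lesssim\int_{\partial\Omega}|D^2u|^2$, and undoing $\Phi$ and the scaling $x\mapsto\rho x$ reinstates the factor $\rho$ and the constant $C(n,\Lambda)$ of \eqref{eq:A_7}. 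For \eqref{eq:A_6} and $C^2(\overline\Omega)$-regularity I would either observe that $C^2$ data over the $C^{1,1}$ pieces of $\partial\Omega$ already produces an $H^2\cap C^2$ extension, or overlay $W$ with a first-order Whitney/jet extension of $(v,\partial_\nu v)$ through the nearest-point retraction (which is $C^1$ away from the edge), whose pointwise $C^1$-estimate is precisely the one required; the two are reconciled by mollifying the data at a scale comparable to the distance to $\partial\Omega$.

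I expect the hard part to be twofold. The right-hand side of \eqref{eq:A_7} contains only $\|D^2u\|_{L^2(\partial\Omega)}$, with no lower-order terms; this is exactly what forces the affine subtraction (so Poincar\'e controls every lower norm of $v$ by $\|D^2u\|$), a sharp use of interpolation, and the $O(1)$ bound on $\mathrm{II}_{\partial\Omega}$ (so that the curvature term in $\nabla_{\mathrm{tan}}\partial_\nu v$ does not escape). Secondly, $B_\rho^+(0)$ genuinely has an edge where its flat and curved faces meet, so neither the nearest-point retraction nor a single straightening chart is globally $C^{1,1}$ there; this is absorbed by localizing the extension away from the edge and patching, exploiting that $C^2(\overline{B_\rho^+(0)})$ asks only that $D^2w$ extend continuously to the Lipschitz closure. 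The remainder is routine bookkeeping of the scaling and diffeomorphism constants.
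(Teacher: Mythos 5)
The paper does not prove Theorem~\ref{A_3}; it is quoted from \cite{Schaetzle}, Lemma~A.1. So I am evaluating your argument on its own merits rather than against an in-text proof.

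Your overall strategy is sound, and the key structural insight is the right one: because the right-hand side of \eqref{eq:A_7} contains only $\|D^2u\|_{L^2(\partial B_\rho^+)}$ with no lower-order terms, something must annihilate the affine part of the data, and subtracting the affine function $\ell$ (so that Poincar\'e on the connected boundary controls every lower norm of $v=u-\ell$ by $\|D^2u\|_{L^2}$) does exactly that. The scaling reduction, boundary flattening, and interpolation chain $\|v\|_{\dot H^{3/2}}\lesssim\|\nabla v\|_{L^2}^{1/2}\|D^2v\|_{L^2}^{1/2}$, $\|\partial_\nu v\|_{\dot H^{1/2}}\lesssim\|\partial_\nu v\|_{L^2}^{1/2}\|\partial_\nu v\|_{\dot H^1}^{1/2}$ are all legitimate steps towards \eqref{eq:A_7}.

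However, there is a genuine gap in how you obtain the pointwise estimate \eqref{eq:A_6} and the $C^2(\overline{B_\rho^+(0)})$ conclusion simultaneously with \eqref{eq:A_7}. The $H^{3/2}\times H^{1/2}\to H^2$ extension (or the biharmonic minimizer) controls $\|D^2W\|_{L^2}$ but gives no pointwise $C^1$ bound in general dimension $n$; a Whitney/jet extension through the nearest-point map gives the pointwise $C^1$ bound but not \eqref{eq:A_7}. You assert the two are ``reconciled by mollifying the data at a scale comparable to the distance to $\partial\Omega$,'' but this is precisely the nontrivial step: you would need to produce a \emph{single} $w$ satisfying both estimates, and the mollified combination is neither shown to retain the sharp $H^2$ bound nor to be $C^2$ up to the corner where the flat face $\{t=\Psi(y)\}$ meets the sphere. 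Relatedly, the homogeneous boundedness of the Cauchy-data-to-$H^2$ extension operator is standard on $C^{1,1}$ domains, but $B_\rho^+(0)$ is only Lipschitz at its edge, so even the $H^2$ part of the construction requires a localization/patching argument near the corner that you acknowledge but do not carry out. A cleaner route to \eqref{eq:A_6} would be to invoke an Agmon-type $C^1$ maximum estimate for the biharmonic extension (as the paper itself does in Lemma~\ref{A_5}), but Agmon's estimate too is stated for smooth domains, so the corner must still be handled explicitly --- for instance by first extending the data across the flat piece by a Hestenes-type reflection and then reducing to a domain without edge. Until one of these is worked out, the argument establishes \eqref{eq:A_7} in $H^2$ but does not deliver a $w\in C^2(\overline{B_\rho^+(0)})$ satisfying both \eqref{eq:A_6} and \eqref{eq:A_7}.
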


\begin{theorem}[see \cite{Simon}, Lemma 1.1]
 \label{A_4}
 Let $\Sigma\subset\R^n$ be a smooth compact connected $2$-dimensional surface without boundary. Then
 \begin{equation*}
  2\sqrt{\frac{\mathcal{H}^2(\Sigma)}{W_{0,0}(\Sigma)}}\leq \operatorname{diam}\Sigma\leq C\sqrt{\mathcal{H}^2(\Sigma)W_{0,0}(\Sigma)}.
 \end{equation*}
The constant $C=C(n)<\infty$ does not depend on $\Sigma$.
\end{theorem}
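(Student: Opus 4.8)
Write $\mu:=\mathcal H^2\lfloor\Sigma$, $d:=\operatorname{diam}\Sigma$ and $W:=W_{0,0}(\Sigma)=\int_\Sigma|H|^2\,d\mu$, where $H$ is the mean curvature vector normalised so that $\Delta_\Sigma x=H$ for the Laplace--Beltrami operator $\Delta_\Sigma$ of $\Sigma$. The two inequalities are proved independently: the lower bound by an elementary integration by parts, the upper bound by the monotonicity formula together with a covering/counting argument.

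\emph{Lower bound.} I would fix $x_0\in\Sigma$ and use that $\Sigma$ is closed to integrate by parts componentwise:
\begin{equation*}
\int_\Sigma H\cdot(x-x_0)\,d\mu=\int_\Sigma\Delta_\Sigma x\cdot(x-x_0)\,d\mu=-\int_\Sigma\langle\nabla_\Sigma x,\nabla_\Sigma(x-x_0)\rangle\,d\mu=-\int_\Sigma|\nabla_\Sigma x|^2\,d\mu=-2\,\mathcal H^2(\Sigma),
\end{equation*}
because $x_0$ is constant along $\Sigma$ and $|\nabla_\Sigma x|^2=\operatorname{tr}(\text{orthogonal projection onto }T\Sigma)=2$. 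Since $|x-x_0|\le d$ on $\Sigma$, applying Cauchy--Schwarz first in $\R^n$ and then in $L^2(\mu)$ gives $2\,\mathcal H^2(\Sigma)\le d\int_\Sigma|H|\,d\mu\le d\,W^{1/2}\mathcal H^2(\Sigma)^{1/2}$, hence $\operatorname{diam}\Sigma=d\ge 2\sqrt{\mathcal H^2(\Sigma)/W_{0,0}(\Sigma)}$, which is the lower bound with the stated constant.

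\emph{Upper bound.} First I would record, from the monotonicity formula for surfaces with $L^2$ mean curvature (see \cite[\S1]{Simon} and \cite[\S17]{Simon_Buch}), that there are constants $c_0=c_0(n)>0$, $\eta=\eta(n)>0$ with: for $x_0\in\Sigma$ and $\sigma>0$, $\int_{B_\sigma(x_0)}|H|^2\,d\mu\le\eta$ implies $\mu(B_\sigma(x_0))\ge c_0\,\sigma^2$. Indeed the monotone quantity $\sigma^{-2}\mu(B_\sigma(x_0))+c_1\int_{B_\sigma(x_0)}|H|^2\,d\mu+c_2\sigma^{-2}\int_{B_\sigma(x_0)}H\cdot(x-x_0)\,d\mu$ tends to $\pi$ as $\sigma\to0^+$ (as $\Sigma$ is smooth, so has density $1$), its cross term is absorbed by Young's inequality, and the remaining curvature term is controlled by $\eta$. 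Next, for a parameter $r>0$ to be fixed, choose a maximal $2r$-separated set $\{x_1,\dots,x_N\}\subset\Sigma$; then $\{B_{2r}(x_i)\}$ covers $\Sigma$ while $\{B_r(x_i)\}$ is pairwise disjoint. Call $x_i$ \emph{bad} if $\int_{B_r(x_i)}|H|^2\,d\mu>\eta$ and \emph{good} otherwise; disjointness forces at most $W_{0,0}(\Sigma)/\eta$ bad indices, and at most $\mathcal H^2(\Sigma)/(c_0r^2)$ good indices (each good ball carries area $\ge c_0r^2$), so $N\le \mathcal H^2(\Sigma)/(c_0r^2)+W_{0,0}(\Sigma)/\eta$. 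Since $\Sigma$ is connected and covered by the $N$ balls of radius $2r$, one has $\operatorname{diam}\Sigma\le 4rN\le 4\mathcal H^2(\Sigma)/(c_0r)+4rW_{0,0}(\Sigma)/\eta$; choosing $r=\big(\mathcal H^2(\Sigma)\eta/(c_0W_{0,0}(\Sigma))\big)^{1/2}$ gives $\operatorname{diam}\Sigma\le C(n)\sqrt{\mathcal H^2(\Sigma)W_{0,0}(\Sigma)}$ with $C(n)=8/\sqrt{c_0\eta}$.

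\emph{Main obstacle.} The only non-elementary input is the monotonicity formula with its explicit nonnegative remainder; granted that, the lower bound is immediate and the upper bound is purely combinatorial. The point requiring care in the upper bound is the dichotomy in the covering: on the finitely many balls where $|H|^2$ concentrates no area lower bound is available, so one must count them separately (via disjointness and finiteness of $W_{0,0}$) and use the area lower bound only on the remaining "good" balls, each of which must carry a definite amount of area.
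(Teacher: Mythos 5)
Your proof is correct: the first-variation identity $\int_\Sigma H\cdot(x-x_0)\,d\mu=-2\,\mathcal H^2(\Sigma)$ yields the lower bound with the stated constant $2$, and the monotonicity formula combined with the separated-net covering and connectedness chaining yields the upper bound. This is essentially the same argument as Simon's Lemma 1.1, which the paper cites for Theorem \ref{A_4} without reproducing a proof (your good/bad ball dichotomy is only a minor stylistic variant of Simon's direct summation of the monotonicity inequality over disjoint balls).
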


The following lemma will be helpful in our first regularity result Theorem \ref{3_1} when combined with Lemma \ref{A_2}. The proof is mainly based on Agmon's estimate \cite[Thm. 1]{Agmon}.
\begin{lemma}
 \label{A_5}
 Let $0<\rho<R$, $B_\rho(0)\subset \R^n$, $u\in C^2(B_\rho(0))$ satisfy $(-\Delta)^2u=0$ in $B_\rho(0)$ and
 \begin{equation*}
  \|u\|_{C^0(\partial B_\rho(0))} + \|\nabla u\|_{C^0(\partial B_\rho(0))}\leq C_0
 \end{equation*}
for a constant $C_0>0$. Then there exists a constant $C_1=C_1(C_0,R,n)>0$ such that
\begin{equation*}
 \|\nabla u\|_{C^0(B_\rho(0))}\leq \rho^{-1}C_1.
\end{equation*}

\end{lemma}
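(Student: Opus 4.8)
The plan is to rescale to the unit ball and apply Agmon's maximum estimate \cite[Thm.~1]{Agmon} for the biharmonic Dirichlet problem; the factor $\rho^{-1}$ and the dependence on $R$ will come out of the scaling. The one delicate point is that the hypotheses only give $u\in C^2(B_\rho(0))$ with finite $C^0$-norms of $u$ and $\nabla u$ on $\partial B_\rho(0)$, and not regularity up to the boundary; I would avoid this by working on concentric slightly smaller balls and passing to the limit.

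First, for $r\in(0,\rho)$ I would set $w_r(x):=u(rx)$ for $x\in\overline{B_1(0)}$. Since $\overline{B_r(0)}\subset B_\rho(0)$ we have $w_r\in C^2(\overline{B_1(0)})$ and $(-\Delta)^2 w_r=0$ in $B_1(0)$. Agmon's estimate, applied to the fourth-order operator $(-\Delta)^2$ on the unit ball (whose boundary sphere is smooth), supplies a constant $C=C(n)$ with
\[
 \sup_{B_1(0)}\big(|w_r|+|\nabla w_r|\big)\le C(n)\,\sup_{\partial B_1(0)}\big(|w_r|+|\nabla w_r|\big).
\]
Here the right-hand side is exactly the $C^1$-norm of the trace of $w_r$ on $\partial B_1(0)$, so this is the form in which Agmon's theorem controls the interior $C^1$-norm for the Dirichlet problem. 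Rewriting via $w_r(x)=u(rx)$ and $\nabla w_r(x)=r\,(\nabla u)(rx)$, the inequality becomes
\[
 \sup_{B_r(0)}\big(|u|+r|\nabla u|\big)\le C(n)\Big(\sup_{\partial B_r(0)}|u|+r\sup_{\partial B_r(0)}|\nabla u|\Big).
\]

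Next I would let $r\uparrow\rho$. Using the uniform continuity of $u$ and $\nabla u$ on the compact set $\overline{B_\rho(0)}$, the left-hand side converges to $\sup_{B_\rho(0)}(|u|+\rho|\nabla u|)$, while $\limsup_{r\to\rho}$ of the right-hand side does not exceed $C(n)\big(\|u\|_{C^0(\partial B_\rho(0))}+\rho\|\nabla u\|_{C^0(\partial B_\rho(0))}\big)$. Since $\|u\|_{C^0(\partial B_\rho(0))}+\|\nabla u\|_{C^0(\partial B_\rho(0))}\le C_0$ and $\rho<R$, the latter is bounded by $C(n)C_0(1+R)$. Dropping the nonnegative term $\sup_{B_\rho(0)}|u|$ and dividing by $\rho$ then gives
\[
 \|\nabla u\|_{C^0(B_\rho(0))}\le\rho^{-1}C(n)C_0(1+R)=:\rho^{-1}C_1,
\]
with $C_1=C_1(C_0,R,n)$, which is the assertion.

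I expect the main obstacle to be the precise invocation of \cite[Thm.~1]{Agmon}: one must verify that its hypotheses are met for $(-\Delta)^2$ with Dirichlet boundary conditions on the ball --- proper ellipticity, smoothness of $\partial B_1(0)$, and the algebraic (complementing/roots) condition underlying the $C^1$-maximum principle for the biharmonic Dirichlet problem --- and confirm that the bound is by the full $C^1$-norm of $w_r$ on the boundary rather than only by the Dirichlet pair $(w_r,\partial_\nu w_r)$ (which is in any case equivalent, since $w_r|_{\partial B_1(0)}$ determines the tangential part of $\nabla w_r$). Everything afterward --- the limiting argument in $r$ and the tracking of the powers of $\rho$ --- is routine.
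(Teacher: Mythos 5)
Your proof is correct and follows essentially the same route as the paper: rescale to the unit ball, apply Agmon's $C^1$-maximum estimate for the biharmonic Dirichlet problem, and read off the factor $\rho^{-1}$ from the chain rule. The only difference is that you approximate by concentric balls $B_r$ and let $r\uparrow\rho$ to sidestep the fact that the hypothesis gives only interior $C^2$-regularity, whereas the paper applies Agmon on $B_1(0)$ directly (implicitly assuming the needed boundary regularity, which holds in the situation where the lemma is used); this is a minor sharpening in rigor, not a different argument.
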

\begin{proof}
 Put $w(x)=u(\rho x)$, then $w$ satisfies $(-\Delta)^2w=0$ in $B_1(0)$. For $x\in B_1(0)$ we also have
 \begin{equation*}
  \nabla w(x)=\rho\nabla u(\rho x).
 \end{equation*}
Agmon's Theorem \cite[Thm. 1]{Agmon} respectively \cite[Eq. (8)]{Agmon} yields a constant $C=C(B_1(0))=C(n)>0$ such that 
\begin{align*}
\|\nabla u\|_{C^0(B_\rho(0))}&= \rho^{-1}\|\nabla w\|_{C^0(B_1(0))}\\
&\leq C \rho^{-1} \|w\|_{C^1(\partial B_1(0))}\\
&\leq C\rho^{-1}(\|u\|_{C^0(\partial B_\rho(0))}+ \rho\|\nabla u\|_{C^0(\partial B_\rho(0))}).
\end{align*}
Hence
\begin{equation*}
 \|\nabla u\|_{C^0(B_\rho(0))}\leq C C_0\max\{1,R\} \rho^{-1},
\end{equation*}
which finishes the proof.
\end{proof}

\begin{lemma}[Graphical decomposition for immersions, cf. \cite{Simon}, Lemma 2.1]
 \label{A_6}
 Let $f:\Sigma\rightarrow \R^n$ be a smooth, $2$-dimensional compact immersion with or without boundary. 
 For any $\beta>0$ there exists an $\varepsilon_0=\varepsilon_0(n,\beta)>0$ (independent of $\Sigma$ and $f$) such that if $\varepsilon\in(0,\varepsilon_0]$,
 $f(\partial \Sigma)\cap B_\rho(x_0)=\emptyset$ for some $x_0\in f(\Sigma)$ and $\rho>0$, also satisfying  $\mu_g(f^{-1}(\overline{B_\rho(x_0)}))\leq \beta \rho^2$ and
 \begin{equation*}
  \int_{f^{-1}(\overline{B_\rho(x_0)})} |A_f|^2\, d\mu_g\leq \varepsilon^2,
 \end{equation*}
 then the following holds:\\
 There exist pairwise disjoint sets $D_{i}\subset \Sigma$ ($i=1,\ldots I$, $I\leq C=C(n, W_{0,0}(f))$), such that
 \begin{equation*}
  f^{-1}(B_{\frac{\rho}{2}}(x_0))=\sum_{i=1}^I D_i.
 \end{equation*}
 Also there are affine $2$-planes $L_i\subset \R^n$ and smooth function $u_i:\overline{\Omega_i}\subset L_i\rightarrow L_i^\perp$ representing $f$. More precisely 
  $\Omega_i=\Omega^0_i\setminus \cup_k d_{i,k}$, $\Omega^0_i$ are simply connected and open and the $d_{i,k}$ are closed pairwise disjoint topological discs. The graphs satisfy
 \begin{equation*}
  \rho^{-1}|u_i| + |\nabla u_i|\leq C(W_{0,0}(f),n)\varepsilon^\frac{1}{4n+10}.
 \end{equation*}
Then there are the so called pimples $P_{i,j}\subset D_i$ ($j=1,\ldots J_i$), which are closed pairwise topological discs and satisfy
\begin{equation*}
 f(D_i\setminus \cup_{j=1}^{J_i}P_{i,j})=\graph(u_i)\cap \overline{B_\rho(x_0)}
\end{equation*}
and
\begin{equation*}
 \sum_{i=1}^I\sum_{j=1}^{J_i}\operatorname{diam} f(P_{i,j})\leq C(W_{0,0}(f),n)\varepsilon^{\frac{1}{2}}\rho.
\end{equation*}

\end{lemma}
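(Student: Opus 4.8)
The statement is the graphical decomposition lemma of Simon \cite[Lemma 2.1]{Simon} rephrased for immersions, and the plan is to follow his argument, replacing the support of a varifold by preimages under $f$. By scaling we may normalise $x_0=0$ and $\rho=1$, so that $\int_{f^{-1}(\overline{B_1(0)})}|A_f|^2\,d\mu_g\le\varepsilon^2$ and $\mu_g(f^{-1}(\overline{B_1(0)}))\le\beta$. The immersion $f$ induces an integral $2$-varifold with these same curvature and area bounds, so Simon's decomposition produces the affine planes $L_i$, the functions $u_i$, the exceptional sets $d_{i,k}$ and the non-graphical pieces on the image side; pulling these back by $f$, which is an embedding on each graphical piece by the area bound together with a standard density comparison, yields the sets $D_i\subset\Sigma$, the domains $\Omega_i=\Omega_i^0\setminus\bigcup_k d_{i,k}$ and the pimples $P_{i,j}\subset D_i$ with the asserted topology. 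The bound $I\le C(n,W_{0,0}(f))$ follows from the lower area bound per connected component of $f^{-1}(B_{1/2}(0))$ supplied by the monotonicity formula, combined with the area bound $\beta$.

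The analytic core that I would reproduce has three ingredients. First, a coarea estimate $\int_{1/2}^{1}\int_{f^{-1}(\partial B_\sigma(0))}(|A_f|^2+1)\,d\mathcal{H}^1\,d\sigma\le C(\varepsilon^2+\beta)$, which yields a set of radii $\sigma\in(1/2,1)$ of positive measure along which $f^{-1}(\partial B_\sigma(0))$ is a finite union of curves, each with small $\int|A_f|^2\,d\mathcal{H}^1$; along such a curve the unit normal of $f$ oscillates by at most $(\mathcal{H}^1\text{-length})^{1/2}(\int|A_f|^2)^{1/2}$, so its image stays in a thin neighbourhood of a circle in a nearly fixed plane, and the number of such curves is bounded by a constant depending on $n$ and $W_{0,0}(f)$. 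Second, an Allard-type tilt-excess/flatness estimate: monotonicity together with the smallness of $\int|A_f|^2$ confines $f^{-1}(B_{1/2}(0))$ to a thin slab over a union of planes with height and tilt bounded by a fixed power $\varepsilon^{1/(4n+10)}$ of $\varepsilon$, which is what produces the gradient bound on the $u_i$. Third, an isoperimetric/Gauss--Bonnet estimate for the pimples: wherever the projection $\pi_{L_i}\circ f$ fails to be a graph the surface folds away from $L_i$, each such fold costing a definite amount of curvature, and quantitatively $\sum_{i,j}\operatorname{diam} f(P_{i,j})$ is bounded by a power of $\big(\int_{f^{-1}(\overline{B_1(0)})}|A_f|^2\big)^{1/2}$ times a power of $\beta$, hence by $C\varepsilon^{1/2}$ once $\varepsilon_0$ is small enough.

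I expect the main obstacle to be precisely the pimple analysis: simultaneously bounding the number of pimples, arranging the $P_{i,j}$ and the $d_{i,k}$ to be topological discs, and proving the diameter estimate, which together form the technical heart of Simon's argument and require combining the coarea slicing, the monotonicity formula, and the isoperimetric estimate with some care. Extracting the gradient smallness with a uniform power of $\varepsilon$ via the Allard-type iteration is the second delicate point. The passage from varifolds to immersions is comparatively routine bookkeeping, but one must verify that the decomposition of $f^{-1}(B_{1/2}(0))$ into the $D_i$ is compatible with the immersion and that no pimple is manufactured artificially by self-intersections of $f$; the area bound and a density comparison rule this out on the graphical pieces.
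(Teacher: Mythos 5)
The paper does not rerun Simon's argument at all; it gives a short reduction. By Whitney's embedding theorem there is a smooth embedding $\tilde f:\Sigma\to\R^4$, and for $\tau>0$ small the map $(f,\tau\tilde f):\Sigma\to\R^{n+4}$ is an \emph{embedding} whose second fundamental form and area in a ball are arbitrarily close to those of $f$. One applies Simon's graphical decomposition lemma \cite[Lemma~2.1]{Simon} verbatim to this embedded surface in $\R^{n+4}$, and then lets $\tau\to 0$ and projects back to $\R^n$; the planes, graphs, pimples and diameter estimates all pass to the limit, giving the stated immersed version. This is a couple of lines rather than a re-derivation, and it sidesteps every issue about self-intersections.

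Your plan is genuinely different and also contains a real gap at its first step. You want to apply Simon's decomposition to ``the integral $2$-varifold induced by $f$'' and then pull the pieces back through $f$. But Simon's Lemma~2.1 is stated and proved for an \emph{embedded} $2$-manifold $\Sigma\subset\R^n$ with small $\int|A|^2$; it is not a statement about integral varifolds with $A\in L^2$, and its proof uses the embedded structure essentially (the slicing of the surface by spheres $\partial B_\sigma$ produces honest embedded curves, the topological bookkeeping of components and discs takes place on the surface as a subset of $\R^n$, etc.). When $f$ is a genuine immersion, $f(\Sigma)$ is not a manifold near self-intersection points, so neither the lemma nor its proof applies to the image, and ``pulling back by $f$'' is not available to produce the $D_i\subset\Sigma$ a priori. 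The density argument you invoke (``$f$ is an embedding on each graphical piece by the area bound together with a standard density comparison'') only becomes available \emph{after} the decomposition into graphs with small tilt has already been established -- it cannot be used to justify invoking that decomposition in the first place. One could try, as you sketch, to rerun all of Simon's machinery (coarea slicing, Allard-type tilt estimates, pimple bounds) directly in the domain $\Sigma$ with respect to the pulled-back metric, and this would plausibly work; but you would then be reproving the technical heart of \cite[Lemma~2.1]{Simon} from scratch rather than doing ``comparatively routine bookkeeping''. The Whitney-embedding reduction avoids all of this.
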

\begin{proof}
The proof is explained in \cite[p. 280, top]{Schaetzle} but we sketch it here for the reader's convenience:\\
 By the Whitney embedding theorem we find a smooth embedding $\tilde{f}:\Sigma\rightarrow\R^4$. Let $\tau>0$. 
 Then $(f,\tau \tilde{f}):\Sigma\rightarrow \R^{n+4}$ is an embedding and we can apply Simon's graphical decomposition lemma \cite[Lemma 2.1]{Simon}.
 For $\tau>0$ small we can project $(f,\tau \tilde{f})$ to $\R^n$ and obtain the desired result.
\end{proof}

\section{Oriented varifolds}
\label{sec:B}
Here we collect the basic definitions for oriented varifolds and a compactness theorem, which were both given by Hutchinson (see \cite[Chapter 3]{Hutchinson}).
Let us denote the set of oriented $n$-dimensional subspaces of $\R^{n+m}$ by
\begin{equation}
 \label{eq:B_1}
 G^0(n,n+m)=\{\tau_1\wedge\ldots\wedge\tau_n\subset\Lambda_n\R^{n+m}:\ |\tau_1|=\ldots=|\tau_n|=1,\ \tau_i\perp\tau_j, i\neq j\}.
\end{equation}
Therefore $G^0(n,n+m)$ is a compact metric space. 
Furthermore the Grassmannian manifold of all unoriented $n$-dimensional subspaces of $\R^{n+m}$ is denoted by $G(n,n+m)$ (cf. \cite[Chapter 8]{Simon_Buch}). 
For computational benefits we identify $G(n,n+m)$ with the set of matrices of orthogonal projections onto $n$-dimensional subspaces, i.e.
\begin{equation*}
 G(n,n+m)=\left\{\begin{array}{c} P\in \R^{n+m\times n+m}:\ \operatorname{dim}(P(\R^{n+m}))=n,\\ 
\ P^2=P,\ \forall x,y\in\R^{n+m}\ \langle x, Py\rangle=\langle Px, Py\rangle=\langle Px,y\rangle
\end{array}\right\}.
\end{equation*}
The standard $2$-fold covering map $q_g:G^0(n,n+m)\rightarrow G(n,n+m)$ is given by
\begin{equation*}
 \tau_1\wedge\ldots\wedge\tau_n\mapsto \tau_1\tau_1^T+\ldots+\tau_n\tau_n^T.
\end{equation*}
Here $\tau \tau^T$ denotes the matrix multiplication between $\tau$ and $\tau^T$. Here $\tau$ is a column vector and $\tau^T$ is the transposed.
Please note, that $q_g$ is well defined since the choice of the orthonormal basis $\tau_1,\ldots,\tau_n$ does not matter for the resulting projection.
Let us denote for $\Omega\subset\R^{n+m}$ open
\begin{equation}
 \label{eq:B_2}
 G^0(\Omega):=\Omega\times G^0(n,n+m),\quad G(\Omega):=\Omega\times G(n,n+m).
\end{equation}

\begin{definition}[see \cite{Hutchinson}, page 48]
 \label{B_1}
 An oriented $n$-varifold $V^0$ on an open set $\Omega\subset\R^{n+m}$ is a Radon measure on $G^0(\Omega)$. 
 \end{definition}
 Oriented varifold convergence is defined as follows: $V_k^0\rightarrow V^0$, if and only if for every $\Phi\in C_0^0(G^0(\Omega))$ we have
 \begin{equation*}
  \int_{G^0(\Omega)}\Phi\, dV^0_k\rightarrow \int_{G^0(\Omega)} \Phi\, dV^0.
 \end{equation*}
The projection $\pi^0:G^0(\Omega)\rightarrow\Omega$ given by $(x,\xi)\mapsto x$ defines the mass $\mu_{V^0}$ of an oriented varifold $V^0$ by
\begin{equation*}
 \mu_{V^0}:=\pi^0(V^0)\mbox{, i.e. } \mu_{V^0}(B)= V^0((\pi^0)^{-1}(B)),\ B\subset \Omega.
\end{equation*}
Since $\pi^0$ is proper, $\mu_{V^0}$ is a Radon measure on $\Omega$ (cf. \cite[Appendix A]{Schaetzle_Skript_Allard}). 

Given an oriented $n$-varifold $V^0$ on $\Omega$ the map $q_g$ defines an $n$-varifold $V$ on $\Omega$ by
\begin{equation*}
 V:=(id\times q_g)(V^0),
\end{equation*}
since $(id\times q_g)$ is proper. Furthermore $V^0$ defines an $n$-dimensional current on $\Omega$ by (cf. \cite[Chapter 6]{Simon_Buch} for more informations on currents)
\begin{equation*}
 [|V^0|](\omega):=\int_{G^0(\Omega)}\langle \omega(x), \xi \rangle\, dV^0(x,\xi),\ \omega\in C_0^\infty(\Omega,\Lambda^n\R^{n+m}).
\end{equation*}
 The corresponding mass is denoted by $M_{\Omega}([|V^0|])$ and the boundary current by $\partial[|V^0|]$.
Next we define oriented rectifiable $n$-varifolds. Given a countable $n$-rectifiable subset $M\subset \Omega$, locally $\mathcal{H}^n$ integrable functions
$\theta_+,\theta_-:M\rightarrow [0,\infty]$ and a $\mathcal{H}^n$ measurable function $\xi:M\rightarrow G^0(n,n+m)$, such that $T_xM=q_g(\xi(x))$ $\mathcal{H}^n$-a.e., an oriented rectifiable $n$-varifold is defined by
\begin{equation}
 \label{eq:B_3}
 V^0(\Phi):=V^0(M,\theta_\pm,\xi)(\Phi):=\int_M \Phi(x,\xi(x))\theta_+(x) + \Phi(x,-\xi(x))\theta_-(x)\, d\mathcal{H}^n(x), 
\end{equation}
for $\Phi\in C_0^0(G^0(\Omega))$. 
The set of all oriented rectifiable $n$-varifolds on $\Omega$ is denoted by $RV^0(\Omega)$. 
If $\theta_\pm$ are integer valued, then we say $V^0$ is an oriented integral $n$-varifold on $\Omega$. The set of these varifolds is denoted
by $IV^0(\Omega)$.\\
The first variation of $V^0\in RV^0(\Omega)$ is defined to be the first variation of the unoriented varifold i.e. $\delta V^0=\delta (id\times q_g)(V^0)$ (see e.g. \cite[§ 39]{Simon_Buch}).
We also denote $\delta\mu_{V^0}:=\delta V^0$, since for rectifiable varifolds, $\mu_{V^0}$ already contains the necessary informations for defining curvatures (see e.g. \cite[§ 16]{Simon_Buch}).
We can define the generalized mean curvature vector $H_{V^0}$ of $V^0$ to be the generalized mean curvature vector of $(id\times q_g)(V^0)$, if it is of bounded first variation.
For curvature varifolds (see \cite[Def. 5.2.1]{Hutchinson} for a precise definition) the generalized mean curvature coincides with the trace of the generalized second fundamental form (see \cite[Remark 5.2.3]{Hutchinson}).
Furthermore we write $H_{V^0}\in L^2(\mu_{V^0})$ if for every $X\in C^1_0(\Omega,\R^n)$ we have
\begin{equation}
\label{eq:B_4}
 \delta V^0(X)= -\int_\Omega H_{V^0}\cdot X\, d\mu_{V^0}\ \mbox{ and } \int_\Omega |H_{V^0}|^2\, d\mu_{V^0} < \infty.
\end{equation}
In the sense of \cite[§ 39]{Simon_Buch} this means that $(id\times q_g)(V^0)$ does not have a generalized boundary.\\

Now we can state Hutchinson's compactness result for oriented varifolds:
\begin{theorem}[see \cite{Hutchinson}, Theorem 3.1]
 \label{B_2}
Let $\Omega\subset\R^{n+m}$ be open. The following set is sequentially compact with respect to oriented varifold convergence:
\begin{align*}
 \{V^0\in IV^0(\Omega):\ \forall \Omega'\subset\subset\Omega\ \exists C(\Omega')<\infty:&\\ \mu_{V^0}(\Omega')+\|\delta\mu_{V^0}\|(\Omega') + M_{\Omega'}(\partial[|V^0|])\leq C(\Omega')\}
\end{align*} 
\end{theorem}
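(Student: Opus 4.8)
The plan is to follow Hutchinson's argument, reducing the oriented statement to the two classical compactness theorems of Allard (for integral varifolds) and Federer--Fleming (for integral currents), and isolating the genuinely new point as the integrality of the two orientation densities.

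First I would extract a limit. Each $V^0_k$ is a Radon measure on $G^0(\Omega)=\Omega\times G^0(n,n+m)$, and since $G^0(n,n+m)$ is compact we have $V^0_k(G^0(\Omega'))=\mu_{V^0_k}(\Omega')\le C(\Omega')$ for every $\Omega'\subset\subset\Omega$; hence the $V^0_k$ have locally uniformly bounded mass on $G^0(\Omega)$ and, by weak-$*$ compactness of Radon measures, a subsequence converges in the sense of oriented varifolds, $V^0_k\rightharpoonup V^0$. This $V^0$ is the candidate limit; it remains to show $V^0\in IV^0(\Omega)$ together with local finiteness of the three defining quantities.

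Next, push $V^0$ forward along the two canonical maps. The unoriented varifolds $V_k:=(id\times q_g)(V^0_k)$ converge to $V:=(id\times q_g)(V^0)$ as $n$-varifolds (the map $id\times q_g$ is continuous and proper), and they have uniformly locally bounded first variation by hypothesis, so Allard's integral compactness theorem gives that $V$ is an integral $n$-varifold, carried by a countably $n$-rectifiable set $M$ with integer density $\theta$, and $\mu_{V^0}=\mu_V=\theta\,\mathcal H^n\lfloor M$. Likewise $[|V^0_k|]\to[|V^0|]=:T$ as currents (testing against $\omega$ amounts to integrating the continuous, $x$-compactly-supported function $(x,\xi)\mapsto\langle\omega(x),\xi\rangle$ on $G^0(\Omega)$), and since $M_{\Omega'}([|V^0_k|])\le\mu_{V^0_k}(\Omega')$ and $M_{\Omega'}(\partial[|V^0_k|])$ are locally bounded, the Federer--Fleming compactness theorem together with the boundary rectifiability theorem shows that $T$ is an integral $n$-current. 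Now disintegrate $V^0$ over its mass, $V^0=\int_\Omega\eta_x\,d\mu_{V^0}(x)$ with probability measures $\eta_x$ on $G^0(n,n+m)$; pushing this disintegration forward by $q_g$ and comparing with that of the integral varifold $V$ forces $(q_g)_{\#}\eta_x=\delta_{T_xM}$ for $\mathcal H^n$-a.e.\ $x\in M$, so $\eta_x$ is supported on $q_g^{-1}(T_xM)=\{\xi(x),-\xi(x)\}$ and hence $\eta_x=\alpha(x)\delta_{\xi(x)}+(1-\alpha(x))\delta_{-\xi(x)}$ for a measurable choice of orientation $\xi$ and $\alpha\in[0,1]$. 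Equivalently $V^0=V^0(M,\theta_+,\theta_-,\xi)$ with $\theta_+:=\alpha\theta\ge0$, $\theta_-:=(1-\alpha)\theta\ge0$ and $\theta_++\theta_-=\theta\in\N_0$ a.e.

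The main obstacle is the last step, namely that $\theta_+$ and $\theta_-$ are themselves integer-valued; this is where the oriented setting genuinely differs from the unoriented one. Since $T=[|V^0|]=(\theta_+-\theta_-)\,\vec\tau_M\,\mathcal H^n\lfloor M$ is integer-multiplicity we get $\theta_+-\theta_-\in\Z$ a.e., but together with $\theta_++\theta_-\in\N_0$ this only yields $\theta_\pm\in\tfrac{1}{2}\N_0$, and the half-integer case really does occur if orientations are allowed to oscillate on ever finer sets (the alternating-strip construction). The point is that such oscillation inflates $M_{\Omega'}(\partial[|V^0_k|])$, so the assumed boundary bound rules it out: blowing up $V^0$ at $\mathcal H^n$-a.e.\ $x_0\in M$ produces a tangent oriented varifold invariant along $T_{x_0}M$, and the boundary-mass bound passed to the blow-up forces the fibre mass $\theta_+(x_0)\delta_{\xi_0}+\theta_-(x_0)\delta_{-\xi_0}$ to be integral --- this is exactly the content of \cite[Theorem 3.1]{Hutchinson} and is the technically heaviest part. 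Once $V^0\in IV^0(\Omega)$ is established, the three defining bounds pass to $V^0$ by lower semicontinuity of mass under weak-$*$ convergence, of the first variation (Allard), and of the mass of a current and of its boundary, so $V^0$ lies in the stated class and sequential compactness follows.
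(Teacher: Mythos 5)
The paper does not prove Theorem~\ref{B_2} at all; it is stated with a citation to Hutchinson and used as a black box. So there is no ``paper proof'' to compare against, only Hutchinson's original argument. Your outline is a faithful summary of that argument: reduce to Allard's integral compactness for the unoriented varifolds $(id\times q_g)_\# V^0_k$, reduce to Federer--Fleming compactness and boundary rectifiability for the currents $[|V^0_k|]$, disintegrate the limit over $\mu_{V^0}$ to get $\theta_\pm=\alpha\theta,(1-\alpha)\theta$, and observe that $\theta_++\theta_-\in\N_0$ together with $\theta_+-\theta_-\in\Z$ only yields $\theta_\pm\in\tfrac12\N_0$, with the boundary-mass bound being exactly what excludes half-integers (the alternating-strip construction) via a blow-up at generic points.

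The one place you under-deliver is the crux itself: you correctly identify that the integrality of $\theta_\pm$ is the genuinely new content, and that the boundary bound rules out oscillating orientations, but you do not actually carry out the blow-up/slicing argument --- you just assert it works, citing Hutchinson's own Theorem~3.1 (which is what you are proving), so the sketch is formally circular at precisely the point where the oriented theory diverges from the classical compactness theorems. Everything else in your write-up (the weak-$*$ extraction, the two push-forwards, the disintegration forcing $\operatorname{supp}\eta_x\subset\{\xi(x),-\xi(x)\}$, the lower-semicontinuity of mass, first variation, and current/boundary mass to close the bounds) is correct and is indeed how Hutchinson structures the proof.
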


\bibliography{bibliography}
\bibliographystyle{plain}

\end{document}